\tikzset{
dot/.style={draw,circle,inner sep=0.8pt,fill=black},
cir/.style={draw,circle,inner sep=0.8pt}
}
\newtheorem{Def}{Definition}[section]
\newtheorem{Prop}[Def]{Proposition}
\newtheorem{Theo}[Def]{Theorem}
\newtheorem{Lem}[Def]{Lemma}
\newtheorem{Koro}[Def]{Corollary}
 \DeclareMathOperator{\add}{add}
\DeclareMathOperator{\domdim}{dom.dim}
 \DeclareMathOperator{\End}{End}
 \DeclareMathOperator{\Ext}{Ext}
 \DeclareMathOperator{\gldim}{gl.dim}
 \DeclareMathOperator{\Hom}{Hom}
 \DeclareMathOperator{\Ker}{Ker}
 \DeclareMathOperator{\projdim}{proj.dim}
 \DeclareMathOperator{\stHom}{\underline{Hom}}
 \DeclareMathOperator{\rd}{rd}
 \DeclareMathOperator{\SE}{{\sf SE}}
 \DeclareMathOperator{\rigdim}{rig.dim}
\newcommand{\defCategory}[2]{
  \newcommand{#1}{#2\defvariable}}
\newcommand{\defvariable}[2][]{
\if\relax\detokenize{#1}\relax  
\if\relax\detokenize{#2}\relax
    \else  ({#2})  \fi
    \else  ^{{\rm #1}}({#2})  \fi}
\defCategory{\C}{\mathscr{C}}
\defCategory{\K}{\mathscr{K}}
\defCategory{\D}{\mathscr{D}}
\defCategory{\wseq}{{\bf k}}
\newcommand{\kk}{\wseq{}}
\newcommand{\Z}{\mathbb{Z}}
\xdef\csname scr\x\endcsname{\noexpand\ensuremath{\noexpand\mathscr{\x}}}
\xdef\csname bb\x\endcsname{\noexpand\ensuremath{\noexpand\mathbb{\x}}}}
\def\modcat#1{{#1}\mbox{-}{\sf mod}}
\def\stmodcat#1{{#1}\mbox{-}\underline{\sf mod}}
\def\Mdim#1{{#1}\mbox{-}\dim}
\newcommand{\lra}{\longrightarrow}
\newcommand{\ra}{\rightarrow}
\newcommand{\lraf}[1]{\stackrel{#1}{\lra}}
\newcommand{\opp}{{}^{\rm op}}
\newcommand{\Fb}{{\sf F}}
\renewcommand{\leq}{\leqslant}
\renewcommand{\geq}{\geqslant}
\newcommand{\rem}[1]{[#1]}
\newcommand{\snode}[2]{node[#1]{${\scriptstyle #2}$}}
\title{Rigidity dimensions of self-injective Nakayama algebras}
\author{Wei Hu, Xiaojuan Yin}
\date{}
\begin{document}
\maketitle

%
%

\begin{abstract}
Rigidity dimension  is a new homological dimension which is intended to measure the quality of the best resolution of an algebra. In this paper, we determine the rigidity dimensions of self-injective Nakayama agebras $A_{n,m}$ with $n$ simple modules and the Loewy length $m\geq n$.
\end{abstract}


\section{Introduction}

A new homological dimension, namely the {\em rigidity dimension}, denoted by $\rigdim$, is introduced in \cite{chen2021rigidity} to measure the quality of the best resolutions of finite dimensional algebras. Given a finite dimensional algebra $\Lambda$. Let $M$ be a generator-cogenerator of $\Lambda$ and $\End _{\Lambda}(M)$ be the endomorphism algebra of $M$. If $\End _{\Lambda}(M)$ has finite global dimension, then $\End _{\Lambda}(M)$ is called a resolution algebra of $\Lambda$.
Since the dominant dimension can be used to compare the homological invariants of $\Lambda$ and $\End _{\Lambda}(M)$, it is chosen as the criterion to measure the quality. More precisely,
$$\rigdim(\Lambda):=\sup\left\{\domdim\End_{\Lambda}(M)\left|\begin{array}{l}
	M\mbox{ is a generator-cogenerator and }\\
	\gldim \End_{\Lambda}(M)<\infty \end{array}\right.\right\}.$$
Rigidity dimension has significant connections to higher representation dimension, Schur-Weyl duality, quasi-hereditary covers, Hochschild cohomology and so on. For example, it is proved in \cite{chen2021rigidity} that the rigidity dimension of $\Lambda$ gives a lower bound on the degree of non-zero non-nilpotent homogeneous generators in  Hochschild cohomology ring. 

The rigidity dimensions of some classes of algebras have been calculated in \cite{chenrigidity}. Examples include gendo-symmetric algebras with dominant dimension 2 and cyclic blocks of group algebras. Regarding representation-finite self-injective algebras, only certain symmetric cases are known. These known cases include Hochschild extension algebras of hereditary algebras in \cite{chenrigidity} for Dynkin type $A_n$ and $D_4$, in \cite{Chenxing} for $D_n$ and by Marczinzik for type $E$.

Calculating the rigidity dimension of a given algebra $\Lambda$ is difficult in general. One must find suitable generator-cogenerator $M$ such that $\End_{\Lambda}(M)$ has finite global dimension and has ``large" dominant dimension at the same time. The aim of this paper is to calculate the rigidity dimensions of representation-finite self-injective algebras of type $(A_{m-1},n/(m-1), 1)$ with $m\geq n$.  A typical such algebra is the 
self-injective Nakayama algebra $A_{n,m}$ with $n$ simple modules and the Loewy length $m\geq n$. To do this, we first give some inductive methods to find generator-cogenerators whose endomorphism algebras have finite global dimensions. Particularly, the following theorem is proved. 
\begin{Theo}
\label{theo-ARseq-intro}
Let $0\ra X\lra Y\ra Z\ra 0$ be an Auslander-Reiten sequence over an Artin algebra $\Lambda$, and let $M$ be a generator-cogenerator of $\Lambda$ with $Y\in\add(M)$. Assume that $N=M\oplus X\oplus  Z$. If $X$ or $Z$ belongs to $\add(M)$, then $\End_{\Lambda}(M)$ has finite global dimension if and only if so does $\End_{\Lambda}(N)$.
\end{Theo}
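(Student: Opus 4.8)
The plan is to reduce by duality, to realize $\End_\Lambda(M)$ as $eBe$ for a suitable idempotent $e$ in $B:=\End_\Lambda(N)$, and to transfer finiteness of global dimension across that idempotent, using two homological consequences of the Auslander--Reiten sequence as the essential input.

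If both $X$ and $Z$ lie in $\add(M)$, then $N\in\add(M)$ and the statement is trivial, so I may assume exactly one of them does. The duality $D$ sends Auslander--Reiten sequences over $\Lambda$ to Auslander--Reiten sequences over $\Lambda\opp$, generator--cogenerators to generator--cogenerators, and satisfies $\End_\Lambda(L)\opp\cong\End_{\Lambda\opp}(DL)$, while global dimension is invariant under $(-)\opp$; hence the case $Z\in\add(M)$ is the case $X\in\add(M)$ for $\Lambda\opp$, and I may assume $X\in\add(M)$ and $Z\notin\add(M)$, so that $N\cong M\oplus Z$. The first step is to note that $g\colon Y\to Z$ is a right $\add(M)$-approximation of $Z$: since $Z\notin\add(M)$ no morphism from $\add(M)$ to $Z$ can be a split epimorphism, so every such morphism factors through the right almost split map $g$; and since $\ker g\cong X\in\add(M)$, the sequence $0\to X\xrightarrow{f}Y\xrightarrow{g}Z\to 0$ is a finite $\add(M)$-resolution of $Z$ with all terms in $\add(M)$. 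Applying $\Hom_\Lambda(M,-)$ to it gives an exact sequence $0\to\Hom_\Lambda(M,X)\to\Hom_\Lambda(M,Y)\to\Hom_\Lambda(M,Z)\to 0$ of $A:=\End_\Lambda(M)$-modules whose first two terms are projective, so $\pd_A\Hom_\Lambda(M,Z)\le 1$. Applying $\Hom_\Lambda(N,-)$ instead, and using that $g$ is right almost split with $Z$ indecomposable (so the image of $\Hom_\Lambda(N,g)$ is the radical of the indecomposable projective $B$-module $\Hom_\Lambda(N,Z)$), gives a projective resolution $0\to\Hom_\Lambda(N,X)\to\Hom_\Lambda(N,Y)\to\Hom_\Lambda(N,Z)\to S_Z\to 0$ of length $2$, so $\pd_B S_Z\le 2$. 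Finally, a non-isomorphism $Z\to Z$ is not a split epimorphism, hence factors through $g$ and thus through $Y\in\add(M)$; so $\rad\End_\Lambda(Z)$ lies inside the ideal of endomorphisms of $Z$ that factor through $\add(M)$, which forces $B/BeB\cong\End_\Lambda(Z)/\rad\End_\Lambda(Z)$ to be a division ring, isomorphic as a $B$-module to $S_Z$; in particular every $B/BeB$-module is a direct sum of copies of $S_Z$ and so has projective dimension $\le 2$ over $B$.

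Now $A=eBe$ for the idempotent $e$ of the summand $M$ of $N$. The restriction functor $e(-)\colon\modcat{B}\to\modcat{A}$ is exact; it carries each $\Hom_\Lambda(N,M')$ with $M'\in\add(M)$ to the projective $A$-module $\Hom_\Lambda(M,M')$, carries $\Hom_\Lambda(N,Z)$ to $\Hom_\Lambda(M,Z)$, and has a right adjoint $\Phi=\Hom_A(eB,-)$ with $e\circ\Phi\cong\mathrm{id}$. If $\gldim B<\infty$: for an $A$-module $V$ we have $V\cong e(\Phi V)$, and applying the exact functor $e(-)$ to a projective $B$-resolution of $\Phi V$ of length $\le\gldim B$ produces a resolution of $V$ whose terms are direct sums of projective $A$-modules and copies of $\Hom_\Lambda(M,Z)$; hence $\pd_A V\le\gldim B+1$, so $\gldim A<\infty$. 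If $\gldim A<\infty$: it suffices to bound $\pd_B$ of each simple $B$-module. For $S_Z$ this bound is $2$. For the simple $S_j$ at an indecomposable summand $M_j$ of $M$, one lifts the finite minimal projective $A$-resolution of $eS_j$ termwise, using the equivalences $\add(M)\simeq\pmodcat{A}$ and $\add(N)\simeq\pmodcat{B}$ (and the full faithfulness of $\Hom_\Lambda(M,-)$ and $\Hom_\Lambda(N,-)$, as $M$ and $N$ are generators), to a bounded complex $\widetilde P_\bullet$ of projective $B$-modules with $e\widetilde P_\bullet$ equal to that $A$-resolution. By exactness of $e(-)$ the homologies $\mathrm{H}_i(\widetilde P_\bullet)$ for $i>0$, and the kernel of the natural epimorphism $\mathrm{H}_0(\widetilde P_\bullet)\twoheadrightarrow S_j$, are annihilated by $BeB$, hence are $B/BeB$-modules of projective dimension $\le 2$ over $B$; peeling these finitely many pieces off the bounded complex $\widetilde P_\bullet$ bounds $\pd_B S_j$, so $\gldim B<\infty$.

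The step I expect to be the main obstacle is this last peeling-off argument---turning the bounded complex $\widetilde P_\bullet$, together with the $B/BeB$-defects located in its positive homology and in $\mathrm{H}_0(\widetilde P_\bullet)$, into an honest finite bound on $\pd_B S_j$---and, upstream of it, cleanly isolating the two inputs $\pd_A\Hom_\Lambda(M,Z)\le 1$ and $\pd_B S_Z\le 2$. It is exactly the hypothesis that $X$ or $Z$ already lies in $\add(M)$ that supplies them: it pushes one outer term of the Auslander--Reiten sequence, together with the middle term $Y$, into $\add(M)$, turning the sequence into a genuine \emph{finite} $\add(M)$-resolution and making $B/BeB$ semisimple; without it the new indecomposable summand need not be reachable from $\add(M)$ and no such transfer is possible.
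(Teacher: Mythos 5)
Your proposal is correct, and it splits into one direction that coincides with the paper's and one that is genuinely different. For the implication $\gldim\End_{\Lambda}(N)<\infty\Rightarrow\gldim\End_{\Lambda}(M)<\infty$ you argue exactly as the paper does: since $Z\notin\add(M)$ and $g\colon Y\to Z$ is right almost split with $\Ker g\cong X\in\add(M)$, the Auslander--Reiten sequence is a length-one $\add(M)$-resolution of $Z$ (the paper phrases this as $\Mdim{M}(Z)\leq 1$), and transferring $\add(N)$-resolutions term by term (Proposition \ref{dim_prop} via Lemma \ref{dim}) gives $\gldim\End_{\Lambda}(M)\leq\gldim\End_{\Lambda}(N)+1$; the duality reduction for the case $Z\in\add(M)$, $X\notin\add(M)$ is also identical. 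For the converse implication the paper simply invokes \cite[Corollary 1.2]{Hu2006}, which already yields the sharper inequality $\gldim\End_{\Lambda}(N)\leq\gldim\End_{\Lambda}(M)$ from the hypothesis $Y\in\add(M)$ alone (and is what the refined statement $0\leq\gldim\End_{\Lambda}(M)-\gldim\End_{\Lambda}(N)\leq 1$ of Theorem \ref{theo-gldimARseq} rests on); you instead give a self-contained argument: $\pd_B S_Z\leq 2$ from the almost split property, semisimplicity of $B/BeB$ because every radical endomorphism of the indecomposable $Z$ factors through $Y\in\add(M)$, and a lift-and-peel induction on a bounded complex of projective $B$-modules whose positive homology and $H_0$-defect are $B/BeB$-modules. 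I checked the delicate points of that argument --- the lift of the minimal $A$-resolution is a complex with image of the first differential in $\rad(Be_j)$, and $eK=0$ for the kernel $K$ of $H_0(\widetilde P_\bullet)\twoheadrightarrow S_j$ --- and they go through; the peeling gives $\pd_B S_j\leq\gldim\End_{\Lambda}(M)+3$ or so, which is weaker than the cited bound but amply sufficient for the finiteness equivalence claimed here. The trade-off is clear: the paper's route is shorter and quantitatively sharper but outsourced, while yours is longer, internal, and makes visible exactly where the hypothesis ``$X$ or $Z\in\add(M)$'' enters the second direction, namely in forcing $B/BeB$ to be a division ring.
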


Theorem \ref{theo-ARseq-intro} is used to give certain ``knitting process" which turns out to be useful in the proof of the finiteness of the global dimensions of endomorphism algebras (see Proposition \ref{gldimM} below).

\begin{Theo}\label{rigdim_all0}
Let $\Lambda$ be a representation-finite self-injective algebra of type $(A_{m-1},n/(m-1),1)$ with $m\geq n$. Suppose that $m=k_0n+s_1, n=k_{1}s_{1}+s_{2}, \cdots, s_d=k_{d+1}s_{d+1}+s_{d+2}, s_{d+2}=0$ is the sequence of equations given by the Euclidean algorithm. The rigidity dimension of $\Lambda$ can be read from Table \ref{table-rd}.
\begin{table}[h]
\centering
\begin{tabular}{l|l l}
\hline
$k_0$&\multicolumn{2}{|c}{$\rigdim \Lambda$}\\
\hline
\multirow{5}*{$k_0=1$}
&$3$,&$m=n>1$\\
&$2k_1=2n$,& $d=0 \mbox{ and }s_1=1$\\
&$2k_1+1,$&$d=0 \mbox{ and }s_1\neq 1$\\
&$2k_1+3,$&$d>0, k_2=1 \mbox{ and } s_3\leq s_2-2$\\
&$2k_1+2,$&else\\
\hline
\multirow{2}*{$k_0=2$}&2,&$s_1=n-1$\\
&$3$,&$s_1<n-1$\\
\hline
$k_0\geq 3$& 2\\
\hline
\end{tabular}
\caption{}
\label{table-rd}
\end{table}
\end{Theo}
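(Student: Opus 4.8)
The plan is to convert the statement into a purely combinatorial problem in the module category of the self-injective Nakayama algebra $A_{n,m}$, to settle that problem by explicit constructions for the lower bounds and a counting argument for the upper bounds, and to bring in the Euclidean-algorithm data only at the end to package the answer. The first step is the standard translation between dominant dimension and rigidity: for a generator-cogenerator $M$ one has $\domdim\End_\Lambda(M)\geq \ell+2$ if and only if $\Ext^i_\Lambda(M,M)=0$ for $1\leq i\leq\ell$ (a refinement of the Morita--Tachikawa correspondence, as in \cite{chen2021rigidity}). Hence $\rigdim\Lambda$ is $2$ plus the largest $\ell$ for which there exists an $\ell$-rigid generator-cogenerator whose endomorphism algebra has finite global dimension; note in particular that $\rigdim\Lambda\geq 2$ always, realized by the Auslander algebra. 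A further reduction --- which I would justify using the classification of the type $(A_{m-1},n/(m-1),1)$ algebras together with the behaviour of the relevant invariants under the equivalences relating them (the common AR-quiver, syzygy operator and positive $\Ext$-groups, along which the knitting mechanism of Theorem~\ref{theo-ARseq-intro} and Proposition~\ref{gldimM} transports) --- shows that $\rigdim\Lambda$ depends only on $(n,m)$, so it suffices to compute $\rigdim A_{n,m}$.

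Next I would make the combinatorics of $A_{n,m}$ fully explicit. Its indecomposable modules are the uniserial modules, parametrized by their top (one of $n$ cyclically ordered vertices) and their length $\ell\in\{1,\dots,m\}$, with the projective-injectives being exactly those of length $m$; the syzygy $\Omega$ acts on the non-projectives by an explicit ``rotate the top and change the length'' rule and is periodic. From this, $\Ext^{\geq 1}_{A_{n,m}}(U,V)$ between indecomposables can be read off directly in terms of tops and lengths, and the resulting vanishing pattern is governed by how $m$ and $n$ sit relative to each other --- that is, by the residues $s_1,s_2,\dots$ produced by the Euclidean algorithm. This lets me determine, in each row of Table~\ref{table-rd}, the largest $\ell$ for which an $\ell$-rigid generator-cogenerator exists \emph{at all}, ignoring the global-dimension constraint. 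For several rows --- the entire $k_0\geq 3$ block, both $k_0=2$ cases, and the $d=0$ cases under $k_0=1$ --- this purely $\Ext$-theoretic bound already equals the claimed value of $\rigdim\Lambda-2$, so there the upper bound is immediate and only the lower bound remains.

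For the lower bounds I would, for each case of the table, exhibit an explicit generator-cogenerator $M$ attaining the stated dominant dimension: the required $\Ext$-vanishing of $M$ is checked against the formula from the previous step, and the finiteness of $\gldim\End_{A_{n,m}}(M)$ is supplied by Proposition~\ref{gldimM}, whose proof is exactly the ``knitting process'' built on Theorem~\ref{theo-ARseq-intro} --- one starts from a module whose endomorphism algebra is known to have finite global dimension (such as the Auslander module) and repeatedly applies Theorem~\ref{theo-ARseq-intro} along Auslander--Reiten sequences $0\to X\to Y\to Z\to 0$ with $Y\in\add M$ and $X$ or $Z$ in $\add M$, adding and deleting summands while keeping the global dimension finite, until the target module is reached. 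The shapes of both the knitting paths and the modules $M$ change with the ratio $m/n$, so I would organize these constructions by the cases $k_0=1$, $k_0=2$, and $k_0\geq 3$.

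The remaining --- and hardest --- part is the upper bound in the ``borderline'' rows: $k_0=1$ with $d>0$, where the answer is $2k_1+3$ when $k_2=1$ and $s_3\leq s_2-2$ but $2k_1+2$ otherwise, and $k_0=2$, where the answer is $2$ when $s_1=n-1$ but $3$ otherwise. In these rows the purely $\Ext$-theoretic bound on rigidity is strictly larger than the claimed $\rigdim\Lambda-2$, so the two conditions genuinely compete: I would show that \emph{any} generator-cogenerator $M$ realizing one more degree of rigidity is forced to have $\End_{A_{n,m}}(M)$ of \emph{infinite} global dimension --- concretely by locating in its Gabriel quiver an unavoidable oriented cycle coming from the forced summand configuration (equivalently, a simple $\End_{A_{n,m}}(M)$-module of infinite projective dimension). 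Identifying precisely when this obstruction is present, and verifying that it disappears exactly under the conditions ``$k_2=1$ and $s_3\leq s_2-2$'' (respectively ``$s_1<n-1$''), is where the fine arithmetic of the Euclidean algorithm is really used, and is the step I expect to absorb most of the work; the rest is systematic, if lengthy, bookkeeping with uniserial modules and their syzygies.
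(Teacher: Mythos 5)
Your overall strategy --- M\"uller's theorem, reduction to $A_{n,m}$, the Euclidean-algorithm-controlled $\Ext$-vanishing pattern on the stable AR-quiver, explicit generator-cogenerators plus the knitting process for the lower bounds, and a forced-obstruction argument for the upper bounds --- is the paper's strategy. Two steps as you describe them, however, would fail.

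First, your claim that for the entire $k_0\geq 3$ block (and for both $k_0=2$ cases) ``the purely $\Ext$-theoretic bound already equals the claimed value, so the upper bound is immediate'' is false. For every $n\geq 2$ the simple module $L_0^1$ satisfies $\rd(L_0^1)\geq 1$ (indeed $1\in\SE_G(1)$ would require $0\geq n-1$), so $\Lambda\oplus L_0^1$ is a generator-cogenerator with rigidity degree at least $1$ regardless of $k_0$; by Theorem \ref{theo-rigdeg-selfinj-nak} one even gets rigidity degree $2k_1-1$ or more in many $k_0\geq 3$ examples. Hence the assertion $\rigdim=2$ for $k_0\geq 3$ forces you to prove that \emph{every} generator-cogenerator with positive rigidity degree has endomorphism algebra of infinite global dimension. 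That is precisely the content of the paper's Propositions \ref{cru-prop} (applied with $l=-1$) and \ref{rigdimk03}, and it is the hardest part of the whole argument, not an immediate one. (You do list $k_0=2$ among the ``borderline'' rows later, contradicting your earlier claim; the $k_0\geq 3$ block you never revisit.)

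Second, the mechanism you propose for the obstruction --- ``an unavoidable oriented cycle in the Gabriel quiver of $\End_\Lambda(M)$'' --- is not a sufficient condition for infinite global dimension: many algebras of finite global dimension have oriented cycles in their quivers. The correct obstruction, and the one the paper uses, is $M$-periodicity of the relative syzygy: the forced configuration of summands produces an indecomposable $X$ with $X\in\add\Omega_M^r(X)$ for some $r>0$, whence $\Hom_\Lambda(M,X)$ has infinite projective dimension over $\End_\Lambda(M)$ and Lemma \ref{AUS} applies. Identifying exactly when this periodicity is unavoidable --- the $\delta$-freeness dichotomy combined with Lemma \ref{HS}, the case split $t_{\min}\geq s_{l+1}$ versus $t_{\min}<s_{l+1}$, and the explicit tracking of $\Omega_M^2$ along the AR-quiver carried out in the Appendix --- is where the fine arithmetic of the conditions ``$k_2=1$ and $s_3\leq s_2-2$'' and ``$s_1<n-1$'' actually enters, and your proposal does not contain a workable substitute for it.
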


The strategy is as follows. We apply Theorem \ref{theo-ARseq-intro} and Proposition \ref{dim_prop} below to find a suitable generator-cogenerator with rigidity degree $r$, and then use Proposition \ref{cru-prop} to show that the endomorphism algebra of a generator-cogenerator with rigidity degree larger than $r$ always has infinite global dimension. Then by definition the rigidity dimension of the algebra is $r+2$.

The paper is organized as follows. In Section 2, we recall some definitions and basic facts of rigidity degrees and rigidity dimensions, and the  combinatorics methods developed in \cite{HY2023}. In Section 3, we develope some homological methods to find generator-cogenerators whose endomorphism algebras have finite global dimension. Section 4 is devoted to giving a proof of Theorem \ref{rigdim_all0} and the technical proof of  Proposition \ref{cru-prop}, which is crucial in the proof of Theorem \ref{rigdim_all0}, is given in Appendix.

\section{Preliminaries}\label{sect-preliminaries}
Throughout this paper all algebras will be connected, non-semisimple and finite dimensional algebras over an algebraically closed field $k$. For an algebra $\Lambda$, we denote by $\modcat{\Lambda}$ the category of all finitely generated left  $\Lambda$-modules and by $\stmodcat{\Lambda}$ the stable module category of $\modcat{\Lambda}$. The syzygy and cosyzygy operators of $\Lambda$-mod are denoted by $\Omega_{\Lambda}$ and $\Omega^-_{\Lambda}$ respectively. Let $M$ be a $\Lambda$-module, we denote by $\add(M)$ the smallest full subcategory of $\Lambda$-mod that contains direct sums and direct summands of $M$. The endomorphism algebra of $M$ over $\Lambda$ is denoted by $\End _{\Lambda}(M)$. The $\Lambda$-module $M$ is called a {\em generator} if $\Lambda\in \add(M)$ and a {\em cogenerator} if $D(\Lambda)\in \add(M)$, where $D=\Hom_k(-, k)$ is the usual duality. We denote by $\tau$ and $\tau^-$ the classic Auslander-Reiten translations of $\Lambda$.




\begin{Def}[\cite{nakayama1958algebras}]
\label{def-domdim}
  Let $\Lambda$ be an algebra, and let
  $$0\ra {}_{\Lambda}\Lambda\ra I^0\ra I^1\ra\cdots $$
  be a minimal injective resolution of ${}_{\Lambda}\Lambda$. The {\bf dominant dimension} of $\Lambda$, denoted by $\domdim \Lambda$, is defined to be the largest integer $d\geq 0$ (or $\infty$) such that $I^i$ is projective for all $i<d$ (or $\infty$).

  For a  $\Lambda$ module $M$, its {\bf rigidity degree}, denoted by $\rd(M)$, is defined as the maximal integer $n>0$ (or $\infty$) such that $\Ext_{\Lambda}^i(M, M)$ vanishes for all $1\leq i\leq n$.
\end{Def}

The following theorem due to M\"{u}ller is very useful. 
\begin{Theo}[\cite{Mueller1968a}]
\label{relat-rig-dom}
 Let $\Lambda$ be an algebra and $M$ a generator-cogenerator of $\Lambda$. Then the dominant dimension of the endomorphism algebra $\End _{\Lambda}(M)$ is precisely  $\rd(M)+2$.
\end{Theo}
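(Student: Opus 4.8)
Since this is Müller's classical theorem, I would reconstruct it by comparing an injective coresolution of $A:=\End_\Lambda(M)$ with the image, under $F:=\Hom_\Lambda(M,-)\colon\modcat\Lambda\to\modcat A$, of a minimal injective coresolution of $M$. The plan rests on three standard facts about the generator-cogenerator situation, which I would quote from Morita--Tachikawa theory rather than reprove: (a) $F$ is fully faithful and restricts to an equivalence between $\add(M)$ and the category of projective $A$-modules; (b) $F$ sends injective $\Lambda$-modules to projective-injective $A$-modules, and conversely every projective-injective $A$-module is isomorphic to $F(E)$ for some injective $\Lambda$-module $E$; (c) the functor $G:=\Hom_A(F(\Lambda),-)$ is \emph{exact} (because $F(\Lambda)$ is a projective $A$-module) and $G\circ F$ is naturally isomorphic to the identity of $\modcat\Lambda$, up to the harmless identification $\End_A(F(\Lambda))\cong\Lambda$. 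For (b): one may assume $\Lambda$ is a direct summand of $M$, so $M\cong e_\Lambda A$ is projective as a right $A$-module for the corresponding idempotent $e_\Lambda$; hence $\Hom_\Lambda(M,D\Lambda)\cong DM$ is injective (and obviously projective) over $A$, and every injective $\Lambda$-module, being a summand of a sum of copies of $D\Lambda$, also has projective-injective image; for the converse half, a projective-injective $A$-module is $F(X)$ with $X\in\add(M)$, and applying $F$ to the injective envelope $X\hookrightarrow X'$ gives a monomorphism of $A$-modules with injective source, which therefore splits, and transporting the splitting back through $F$ forces $X=X'$ to be injective. I would also first record the elementary reformulation: $\domdim A\geq n$ if and only if there is an exact sequence $0\to A\to Q^0\to Q^1\to\cdots\to Q^{n-1}\to C\to 0$ with every $Q^i$ projective-injective; the nontrivial direction is to splice an injective coresolution of $C$ onto this sequence and observe that the minimal injective coresolution of $A$ splits off as a direct summand of complexes, so its first $n$ terms, being summands of $Q^0,\dots,Q^{n-1}$, are projective.

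Granting this, the argument is a cohomology computation. Let $0\to M\to J^0\to J^1\to\cdots$ be a minimal injective coresolution of $M$ over $\Lambda$; since $M$ is a cogenerator, each $J^i\in\add(D\Lambda)\subseteq\add(M)$, so by (a) and (b) every $F(J^i)$ is projective-injective over $A$. Because $J^\bullet$ is an injective coresolution of $M$ in the second variable, the complex
\[0\longrightarrow F(J^0)\longrightarrow F(J^1)\longrightarrow F(J^2)\longrightarrow\cdots\]
has cohomology $\Ext^i_\Lambda(M,M)$ in degree $i\geq 0$, with $H^0=\Hom_\Lambda(M,M)=A$. Hence, if $\rd(M)\geq r$, i.e. $\Ext^i_\Lambda(M,M)=0$ for $1\leq i\leq r$, the augmented complex is exact at $A,F(J^0),\dots,F(J^r)$; appending the cokernel of $F(J^r)\to F(J^{r+1})$ produces an exact sequence $0\to A\to F(J^0)\to\cdots\to F(J^{r+1})\to C\to 0$ with $r+2$ projective-injective terms, and the reformulation yields $\domdim A\geq r+2$. (For $r\geq 1$ this uses the $\Ext$-vanishing; for $r=0$ the sequence $0\to A\to F(J^0)\to F(J^1)\to C\to 0$ is exact unconditionally, recovering $\domdim A\geq 2$.)

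For the converse, suppose $\domdim A\geq r+2$. By the reformulation there is an exact sequence $0\to A\to Q^0\to\cdots\to Q^{r+1}\to C\to 0$ with all $Q^i$ projective-injective. Using (b) write $Q^i=F(E^i)$ with $E^i$ an injective $\Lambda$-module, and using the full faithfulness of $F$ on $\add(M)$ realize the differentials as $F$ applied to $\Lambda$-homomorphisms $M\to E^0\to\cdots\to E^{r+1}$ (faithfulness forces the consecutive composites to vanish, so this is genuinely a complex). Applying the exact functor $G$ from (c) turns the exact sequence back into an exact sequence $0\to M\to E^0\to\cdots\to E^{r+1}\to G(C)\to 0$ in $\modcat\Lambda$; since the $E^i$ are injective, this is the beginning of an injective coresolution of $M$, so for $1\leq i\leq r$ we get $\Ext^i_\Lambda(M,M)=H^i\bigl(\Hom_\Lambda(M,E^\bullet)\bigr)=H^i(Q^\bullet)=0$, the last equality because $0\to A\to Q^0\to\cdots\to Q^{r+1}$ is exact at $Q^i$ for $i\leq r$. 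Thus $\rd(M)\geq r$. Combining the two implications, $\rd(M)=\max\{r:\domdim A\geq r+2\}=\domdim A-2$, with the convention $\infty-2=\infty$, which is exactly $\domdim\End_\Lambda(M)=\rd(M)+2$.

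I expect the main obstacle to be not the bookkeeping above but securing fact (b) cleanly --- in particular, that $F$ carries injective $\Lambda$-modules to genuinely \emph{injective} (not merely projective) $A$-modules, and conversely that no projective-injective $A$-module appears outside $\add\bigl(\Hom_\Lambda(M,D\Lambda)\bigr)$. Once $M$ is known to be projective as a right $A$-module, the identification $\Hom_\Lambda(M,D\Lambda)\cong DM$ settles the first half, and the splitting argument sketched above settles the second; these are precisely the contents of the Morita--Tachikawa correspondence, and in an actual write-up I would cite that correspondence rather than redevelop it, so that the proof reduces to the two short computations in the previous two paragraphs.
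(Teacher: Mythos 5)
Your reconstruction is correct: the reformulation of $\domdim$ via an exact sequence $0\to A\to Q^0\to\cdots\to Q^{n-1}\to C\to 0$ with projective-injective terms, the identification of projective-injective $\End_{\Lambda}(M)$-modules with $\Hom_{\Lambda}(M,E)$ for $E$ injective, and the two cohomology computations transported through $\Hom_{\Lambda}(M,-)$ and its exact quasi-inverse-on-images $\Hom_A(\Hom_{\Lambda}(M,\Lambda),-)$ constitute the standard proof of M\"{u}ller's theorem. The paper itself states this result with a citation to M\"{u}ller and gives no proof, so there is no internal argument to compare with; your write-up is a faithful rendering of the classical one, with the Morita--Tachikawa facts correctly isolated as the only ingredients to be quoted.
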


The rigidity dimension of an algebra $\Lambda$ can be then reformulated as follows. 
$$\rigdim(\Lambda)=\sup\left\{\rd(M)\left|\begin{array}{l}
 M \mbox{ is a generator-cogenerator and }\\
 \gldim\End_{\Lambda}(M)<\infty\end{array}\right.\right\}+2.$$

Let $\Lambda=A_{n,m}$ be the self-injective Nakayama algebra with $n$ simple modules and Loewy length $m$. Let $\Gamma_s(\Lambda)$ be the stable AR-quiver of $\Lambda$, then we have $\Gamma_s(\Lambda)\cong \Z A_{m-1}/G$ (see \cite{Riedtmann1980a}), where $A_{m-1}$ is the Dynkin quiver of type $A$ with $m-1$ vertices and $G=\langle \tau^n\rangle$. We coordinate the  translation quiver $\mathbb{Z}A_{m-1}$  as follows.
\begin{center}
\begin{tikzpicture}
\pgfmathsetmacro{\ul}{0.7};
\draw[->] (1.1,4.9)--(1.9,4.1);
\draw[->] (2.1,3.9)--(2.9,3.1);
\draw[dotted] (3.1,2.9)--(3.9,2.1);
\draw[->] (4.1,1.9)--(4.9,1.1);
\draw[->] (5.1,0.9)--(5.9,0.1);
\draw[->] (3.1,4.9)--(3.9,4.1);
\draw[->] (4.1,3.9)--(4.9,3.1);
\draw[dotted] (5.1,2.9)--(5.9,2.1);
\draw[->] (6.1,1.9)--(6.9,1.1);
\draw[->] (7.1,0.9)--(7.9,0.1);
\draw[->] (5.1,4.9)--(5.9,4.1);
\draw[->] (6.1,3.9)--(6.9,3.1);
\draw[dotted] (7.1,2.9)--(7.9,2.1);
\draw[->] (8.1,1.9)--(8.9,1.1);
\draw[->] (9.1,0.9)--(9.9,0.1);
\draw[->] (2.1,4.1)--(2.9,4.9);
\draw[->] (3.1,3.1)--(3.9,3.9);
\draw[->] (4.1,4.1)--(4.9,4.9);
\draw[->] (5.1,3.1)--(5.9,3.9);
\draw[->] (5.1,1.1)--(5.9,1.9);
\draw[->] (6.1,0.1)--(6.9,0.9);
\draw[->] (7.1,1.1)--(7.9,1.9);
\draw[->] (8.1,0.1)--(8.9,0.9);
\fill[fill=black]  (1,5)  node[] {${\scriptscriptstyle (3,m-1)}$};
\fill[fill=black]  (2,4) node[] {${\scriptscriptstyle (3,m-2)}$};
\fill[fill=black]  (3,3) node[] {${\scriptscriptstyle (3,m-3)}$};
\fill[fill=black]  (4,2) node[] {${\scriptscriptstyle (3,3)}$};
\fill[fill=black]  (5,1) node[] {${\scriptscriptstyle (3,2)}$};
\fill[fill=black]  (6,0) node[] {${\scriptscriptstyle (3,1)}$};
\fill[fill=black]  (3,5) node[] {${\scriptscriptstyle (2,m-1)}$};
\fill[fill=black]  (4,4) node[] {${\scriptscriptstyle (2,m-2)}$};
\fill[fill=black]  (5,3) node[] {${\scriptscriptstyle (2,m-3)}$};
\fill[fill=black]  (6,2) node[] {${\scriptscriptstyle (2,3)}$};
\fill[fill=black]  (7,1) node[] {${\scriptscriptstyle (2,2)}$};
\fill[fill=black]  (8,0) node[] {${\scriptscriptstyle (2,1)}$};
\fill[fill=black]  (5,5) node[] {${\scriptscriptstyle (1,m-1)}$};
\fill[fill=black]  (6,4) node[] {${\scriptscriptstyle (1,m-2)}$};
\fill[fill=black]  (7,3) node[] {${\scriptscriptstyle (1,m-3)}$};
\fill[fill=black]  (8,2) node[] {${\scriptscriptstyle (1,3)}$};
\fill[fill=black]  (9,1) node[] {${\scriptscriptstyle (1,2)}$};
\fill[fill=black]  (10,0) node[] {${\scriptscriptstyle (1,1)}$};
\draw[dotted] (0,2.5)--(2.2,2.5);
\draw[dotted] (8.5,2.5)--(11,2.5);
\end{tikzpicture}
\end{center}
Let $\pi: \mathbb{Z}A_{m-1}\lra \mathbb{Z}A_{m-1}/G$ be the natural covering map. The automorphism $\Omega$ of $\Gamma_s(\Lambda)$
induced by the syzygy functor $\Omega_{\Lambda}$ lifts to an automorphism $\omega$ of $\mathbb{Z}A_{m-1}$. For each vertex $v\in\mathbb{Z}A_{m-1}$, there are sets $H^-(v)$ and $H^+(v)$ of vertices on $\mathbb{Z}A_{m-1}$ such that 
$$\pi H^-(v)=\{Y\in \Gamma_s(\Lambda)|\stHom_{\Lambda}(Y, \pi(v))\neq 0\},$$
$$\pi H^+(v)=\{Y\in \Gamma_s(\Lambda)|\stHom_{\Lambda}(\pi(v), Y)\neq 0\},$$
where $\stHom_{\Lambda}(-,-)$ is the Hom-space in the stable category of $\Lambda$. The sets $H^{\pm}(v)$ and the vertex $\tau^{-1}\omega(v)$ are shown in the following figure. 
\begin{center}
    \begin{tikzpicture}
    \draw[-] (-4,1)--(4,1) \snode{above}{1};
    \draw[-] (-4,4)--(4,4) \snode{above}{m-1};
    \draw[-,fill=gray!30,opacity=0.8] (0,2)node[dot]{}\snode{above}{v}--(2,4)--(3,3)--(1,1)--(-2,4)--(-3,3)node[dot]{}\snode{left}{\tau^{-1}\omega(v)}--(-1,1)--cycle;
    \node at (1.5,2.5) {${\scriptstyle H^+(v)}$};
    \node at (-1.5,2.5) {${\scriptstyle H^-(v)}$};
    \end{tikzpicture}
\end{center}
By definition $H^-(v)=H^+(\tau^{-1}\omega(v))$ and  $\omega(x,t)=(x+t,m-t)$ for each vertex $(x,t)$. This gives a combinatorial way to detect whether $\Ext_{\Lambda}^i(X, Y)$ vanishes or not for two $\Lambda$-modules $X$ and $Y$. Let $\mathscr{X}$ and $\mathscr{Y}$ be sets of vertices such that 
$$X=\bigoplus_{v\in\mathscr{X}}\pi(v), \quad Y=\bigoplus_{v\in\mathscr{Y}}\pi(v).$$
Then for each $i>0$, we have the following natural isomorphisms
$$\Ext_{\Lambda}^i(X, Y)\cong\stHom_{\Lambda}(\Omega_{\Lambda}^iX,Y)\cong D\stHom_{\Lambda}(Y,\tau\Omega_{\Lambda}^{i-1}X).$$
Let $\tau_i:=\tau\Omega_{\Lambda}^{i-1}$ be the higher Auslander-Reiten translation, and denote the corresponding automorphism $\tau\omega^{i-1}$ also by $\tau_i$. Hence $\Ext_{\Lambda}^i(X, Y)\neq 0$ if and only if $(G\omega^i\mathscr{X})\cap H^-(\mathscr{Y})\neq \emptyset$, if and only if $(G\mathscr{Y})\cap H^-(\tau_i\mathscr{X})\neq \emptyset$. Thus the following sets of positive integers coincide with each other. $$\SE(X,Y):=\{i>0\mid\Ext_{\Lambda}^i(X, Y)\neq 0\}, $$
$$\SE_G(\mathscr{X},\mathscr{Y}):=\{i>0\mid (G\omega^i\mathscr{X})\cap H^-(\mathscr{Y})\neq \emptyset \}=\{i>0\mid (G\mathscr{Y})\cap H^-(\tau_i\mathscr{X})\neq \emptyset \}.$$
For simplicity, we write $\SE(X)$ for $\SE(X,X)$ and $\SE_G(\mathscr{X})$ for $\SE_G(\mathscr{X},\mathscr{X})$, and by definition the rigidity degree of $X$ is 
$$\rd(X)=\inf\SE(X)-1=\inf\SE_G(\mathscr{X})-1.$$
We define 
$$\rd(\mathscr{X},\mathscr{Y})=\rd(X,Y):=\begin{cases}
\infty, & \mbox{ if } \SE(X,Y)=\emptyset;\\
\inf\SE(X,Y)-1, & \mbox{else}.\\
\end{cases}$$
Clearly the rigidity degree of a $\Lambda$-module is invariant under $\tau$ and $\Omega_{\Lambda}$. Thus we can simply write $\rd(t)$ for $\rd(x,t)$ for all $x\in\mathbb{Z}$, and $\rd(t)=\rd(m-t)$ since $\omega(x,t)=(x+t,m-t)$. 

\medskip 
The following lemma is proved in \cite{HY2023}. 
\begin{Lem}
\label{lem-inSE}
Keep the notations above. Let $0<t\leq m/2$ and write $\SE_G(t)$ for $\SE_G(0,t)$. For each non-negative integer $i$, let $\rem{im}_n$ be the remainder of $im$ modulo $n$.  Then
\begin{itemize}
	\item[$(1)$] $2i\in\SE_G(t)$ if and only if $\rem{im}_n<t$.
	\item[$(2)$]  $2i+1\in\SE_G(t)$ if and only if $\rem{im}_{n}\geq n-t$.
	\item[$(3)$] If $t>1$, then $\SE_{G}(t-1)\subseteq\SE_{G}(t)$. In particular, $\rd(t)\leq\rd(t-1)$.
\end{itemize}
\end{Lem}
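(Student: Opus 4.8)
The plan is to translate the three statements into the combinatorics of $\mathbb{Z}A_{m-1}/G$ set up above, using the explicit description of $H^-(v)$ and the formula $\omega(x,t)=(x+t,m-t)$. Fix the vertex $v=(0,t)$ with $0<t\le m/2$. By definition $i\in\SE_G(t)$ iff $(G\omega^i(0,0))\cap H^-(0,t)\neq\emptyset$. So the first task is to compute $\omega^i(0,0)$. Iterating $\omega(x,s)=(x+s,m-s)$, one checks by a short induction that $\omega^{2i}(0,0)=(im,0)$ and $\omega^{2i+1}(0,0)=(im,m)$ — more precisely the second coordinate alternates between $0$ and $m$ while the first coordinate increases by $m$ every two steps (here one should be slightly careful that $(x,0)$ and $(x,m)$ are to be read on the appropriate boundary of the strip $1\le t\le m-1$, i.e.\ as the ``virtual'' projective/injective vertices, but the syzygy automorphism $\omega$ is defined on all of $\mathbb{Z}A_{m-1}$ and this causes no trouble). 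Applying $G=\langle\tau^n\rangle$, and recalling that $\tau$ acts by shifting the first coordinate, $G\omega^{2i}(0,0)$ is the set of vertices $(im + \ell n,\,0)$ for $\ell\in\mathbb Z$, and similarly $G\omega^{2i+1}(0,0)=\{(im+\ell n,\,m):\ell\in\mathbb Z\}$.

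Next I would read off from the shaded ``hammock'' picture exactly which vertices of the form $(\ast,0)$ or $(\ast,m)$ lie in $H^-(0,t)$. The region $H^-(v)$ for $v=(0,t)$ is the parallelogram with one corner at $v$, bounded by the two boundary lines $t=1$ and $t=m-1$ and by the two meshes going down-left and up-left from $v$; its reflection/closure meets the bottom virtual line $t=0$ in a segment of horizontal length $t$ and the top virtual line $t=m$ in a segment of length $t$ as well, placed so that $(0,t)$ projects between them. Concretely one expects $H^-(0,t)$ to contain $(j,0)$ precisely for $-t< j\le 0$ (equivalently $0\le -j<t$), and to contain $(j,m)$ precisely for the complementary-looking range forced by $\omega$, namely $j$ with $-m< j\le -(m-t)$, i.e.\ $n-$type condition $\rem{\cdot}\ge m-t$; the exact endpoints are what must be pinned down from the figure and from $H^-(v)=H^+(\tau^{-1}\omega(v))$. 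Substituting the descriptions of $G\omega^{2i}(0,0)$ and $G\omega^{2i+1}(0,0)$ into these membership conditions, the even case becomes: there exists $\ell$ with $0\le im+\ell n<t$, which since $0<t\le m/2\le n$ forces $\ell$ to be unique and gives exactly $\rem{im}_n<t$; the odd case becomes: there exists $\ell$ with $im+\ell n$ in the top-segment range, which rearranges to $\rem{im}_n\ge n-t$. This proves (1) and (2).

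For (3), assume $t>1$. Using (1), $2i\in\SE_G(t-1)$ means $\rem{im}_n<t-1$, which certainly implies $\rem{im}_n<t$, so $2i\in\SE_G(t)$; using (2), $2i+1\in\SE_G(t-1)$ means $\rem{im}_n\ge n-(t-1)=n-t+1>n-t$, so $2i+1\in\SE_G(t)$. Hence $\SE_G(t-1)\subseteq\SE_G(t)$, and taking infima gives $\rd(t)=\inf\SE_G(t)-1\le\inf\SE_G(t-1)-1=\rd(t-1)$.

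The main obstacle is the second step: getting the endpoints of the segments $H^-(0,t)\cap\{t=0\}$ and $H^-(0,t)\cap\{t=m\}$ exactly right, including the correct strict/non-strict inequalities, since an off-by-one there would corrupt both (1) and (2). I would handle this carefully by working on the universal cover $\mathbb{Z}A_{m-1}$, writing $H^-(v)$ as the set of $w$ with a nonzero map $w\to v$ in the mesh category (equivalently using the standard hammock description of $\Hom$ in $\mathbb{Z}A_{m-1}$), and then using $\omega(x,t)=(x+t,m-t)$ together with $H^-(v)=H^+(\tau^{-1}\omega(v))$ to transport a ``lower boundary'' computation to an ``upper boundary'' computation, so that the two cases are genuinely symmetric under $\omega$. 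Everything else is a direct substitution and the elementary observation that on the range $0<t\le m/2$ one has $t\le n$, so each congruence class mod $n$ is hit at most once by the relevant arithmetic progression.
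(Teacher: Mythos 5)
First, note that the paper does not prove this lemma itself; it is quoted from \cite{HY2023}, so there is no in-paper argument to compare against. Judged against the combinatorial setup of Section \ref{sect-preliminaries}, your overall strategy is the right one (compute $\omega^i$ of the relevant vertex, intersect its $\tau^n$-orbit with the hammock $H^-$, and convert to a congruence condition modulo $n$), and your deduction of (3) from (1) and (2) is correct. But the execution of (1) and (2) has a genuine error and a genuine gap. The error: $\SE_G(t)=\SE_G(0,t)$ is by definition $\{i>0\mid (G\omega^i(0,t))\cap H^-(0,t)\neq\emptyset\}$, where $(0,t)$ is the \emph{vertex} of the module in question, so you must iterate $\omega$ on $(0,t)$, not on the virtual bottom vertex $(0,0)$. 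The correct computation is $\omega^{2i}(0,t)=(im,t)$ and $\omega^{2i+1}(0,t)=(im+t,m-t)$; these land inside the strip $1\leq s\leq m-1$, so no extension of $H^-$ to the virtual boundary lines $s=0,m$ is needed at all. Starting from $(0,0)$ forces you to invent a ``closure'' of the hammock on those boundary lines, which is exactly where your argument becomes non-rigorous.

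The gap: you yourself flag the determination of the boundary segments as ``the main obstacle'' and leave it unresolved, and your two attempts are inconsistent. You first posit that the bottom segment is $\{(j,0)\mid -t<j\leq 0\}$, which would translate the even case into $-t<im+\ell n\leq 0$, i.e. $\rem{-im}_n<t$; but in the conclusion you silently switch to $0\leq im+\ell n<t$, i.e. $\rem{im}_n<t$. These differ (generically $\rem{-im}_n=n-\rem{im}_n$), so the critical step is effectively asserted rather than proved. A clean fix: from the paper's description, $H^-(0,t)$ is the full rectangle $\{(j,s)\mid 0\leq j\leq t-1,\ t-j\leq s\leq m-1-j\}$. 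Then $(im+\ell n,t)\in H^-(0,t)$ iff $0\leq im+\ell n\leq t-1$ (the height constraint $t\leq m-1-(im+\ell n)$ is automatic because $t\leq m/2$), giving (1); and $(im+t+\ell n,m-t)\in H^-(0,t)$ iff $0\leq im+t+\ell n\leq t-1$, i.e. $n-t\leq im+\ell n\leq n-1$ for some $\ell$, giving (2). Finally, your side remark that $t\leq m/2\leq n$ is false in general (e.g. when $k_0\geq 3$ one has $m\geq 3n$); fortunately the equivalence ``$\exists\,\ell:\,0\leq im+\ell n<t$ iff $\rem{im}_n<t$'' holds for every $t>0$, so this does not need repairing, but it should not be used as stated.
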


Actually, an explicit formula for rigidity degrees of indecomposable modules has been provided in \cite[Theorem 4.1]{HY2023}. It is closely related to the Euclidean algorithm. Fix two positive integers $m$ and $n$. The Euclidean algorithm gives rise to a sequence of equations:
$$m=k_0n+s_1, n=k_{1}s_{1}+s_{2}, \cdots, s_d=k_{d+1}s_{d+1}+s_{d+2}, s_{d+2}=0$$
where $0<s_i<s_{i-1}$ for all $1\leq i\leq d+1$ (set $s_0=n$). Consider the sequence $k_1, \cdots, k_{d+1}$ and define recursively
\begin{equation*}
\Fb_l:=\left\{
    \begin{array}{ll}
      0, & l=-1; \\
      1, & l=0; \\
      k_{l}\Fb_{l-1}+\Fb_{l-2}, & 1\leq l\leq d+1.
    \end{array}
  \right.
\end{equation*}
The sequence $\Fb_{-1}, \Fb_{0}, \cdots, \Fb_{d+1}$ is called the {\em weighted Fibonacci sequence} with respect to the {\em weight sequence} $k_1,k_2,\cdots, k_{d+1}$. 

\begin{Theo}[\cite{HY2023}]
\label{theo-rigdeg-selfinj-nak}
Keep then notations above.
Let $\Lambda=A_{n,m}$ be the self-injective Nakayama algebra with $n$ simple modules and Loewy length $m$. Suppose that $X$ is an indecomposable $\Lambda$-module corresponding to the vertex $(x,t)$ in $\mathbb{Z}A_{m-1}$ with $t\leq m/2$. Then
$$\rd(X)=\begin{cases}
	2\Fb_l-1, & s_{l+1}<t<s_l, l\mbox{ is even, or }l=d+1;\\
	2\Fb_l, &  s_{l+1}\leq t\leq s_l, l<d+1\mbox{ is odd};\\
	2(\Fb_{d+1}-\Fb_{d}), & d\mbox{ is even and }t=s_{d+1}\leq m/2.\\
\end{cases}$$
\end{Theo}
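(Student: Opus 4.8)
\textbf{Proof proposal for Theorem~\ref{theo-rigdeg-selfinj-nak}.}

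The plan is to reduce the computation of $\rd(X)=\inf\SE_G(t)-1$ entirely to the arithmetic of the sequences $\rem{im}_n$ via Lemma~\ref{lem-inSE}, and then to show that the first index at which a membership condition is satisfied is governed by the weighted Fibonacci sequence $\Fb_l$. First I would fix an indecomposable $X$ at vertex $(x,t)$ with $t\leq m/2$ and invoke Lemma~\ref{lem-inSE}: we need the least $i>0$ such that either $i=2j$ with $\rem{jm}_n<t$, or $i=2j+1$ with $\rem{jm}_n\geq n-t$. Since $m=k_0n+s_1$, we have $\rem{jm}_n=\rem{js_1}_n$, so the whole problem depends only on $n$, $s_1$, and $t$; this is exactly the setup in which the Euclidean algorithm on $(n,s_1)$ (equivalently on $m$ and $n$) produces the $s_l$'s and $k_l$'s. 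So the heart of the matter is: describe, as $j$ runs through $0,1,2,\dots$, how small $\rem{js_1}_n$ can get (for the even branch) and how close to $n$ it can get (for the odd branch), and at which $j$ these records are first achieved.

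The key combinatorial input is the \emph{three-distance} / continued-fraction structure of the multiples $\{js_1 \bmod n\}$. I would prove by induction on $l$ a statement of the following shape: the smallest positive value of $\rem{js_1}_n$ over $0<j\leq \Fb_l$ equals $s_{l+1}$ (up to the appropriate parity bookkeeping of whether the record is a ``small'' residue or an ``$n$ minus small'' residue), and it is first attained at $j=\Fb_l$; moreover for $j<\Fb_l$ the residues do not dip below $s_l$ on the relevant side. Concretely, the even convergents $\Fb_0,\Fb_2,\dots$ of the continued fraction $[k_1,k_2,\dots]$ give $j$ with $\rem{js_1}_n$ small (equal to $s_{l+1}$), while the odd convergents $\Fb_1,\Fb_3,\dots$ give $j$ with $\rem{js_1}_n$ close to $n$ (equal to $n-s_{l+1}$); this is the standard best-approximation property of continued-fraction convergents, here in the ``one-sided'' form. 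Then the three cases of the theorem fall out by matching $t$ against consecutive $s_l$'s:
\begin{itemize}
\item if $s_{l+1}<t<s_l$ with $l$ even, the first time the even condition $\rem{jm}_n<t$ holds is at $j=\Fb_l$, giving $\inf\SE_G(t)=2\Fb_l$ and $\rd(X)=2\Fb_l-1$ — provided no odd index $2j'+1$ with $j'<\Fb_l$ already qualifies, which is where the one-sided control on the odd convergents is needed;
\item if $s_{l+1}\leq t\leq s_l$ with $l<d+1$ odd, the odd condition $\rem{jm}_n\geq n-t$ first holds at $j=\Fb_l$, giving $\inf\SE_G(t)=2\Fb_l+1$, hence $\rd(X)=2\Fb_l$;
\item the boundary case $d$ even, $t=s_{d+1}$, is the degenerate end of the algorithm ($s_{d+2}=0$, i.e.\ $s_{d+1}\mid s_d$) and needs a separate short computation showing the first qualifying index is $2(\Fb_{d+1}-\Fb_d)$; here the recursion ``terminates'' and the last convergent behaves specially, so one tracks the multiples $j s_{d+1}$ exactly.
\end{itemize}
The case $l=d+1$ (the open interval below $s_{d+1}$, equivalently $t<s_{d+1}$, using $s_{d+2}=0$) is handled like the even case since by then the residue $0$ becomes attainable and $\Fb_{d+1}$ is the relevant index.

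The main obstacle, as usual with these continued-fraction arguments, is the bookkeeping that ensures the claimed index $\Fb_l$ is not beaten by a \emph{smaller} index coming from the \emph{other} parity branch: one must show simultaneously that up to $j=\Fb_l$ the ``small residue'' side never drops below $s_l$ and the ``large residue'' side never rises above $n-s_l$, so that the two interleaving sequences of records are exactly $s_0>s_1>s_2>\cdots$ occurring at $\Fb_{-1}<\Fb_0<\Fb_1<\cdots$. I would set this up as a single two-sided induction (carrying both the minimum-so-far and the maximum-so-far, together with the $j$ at which each is attained), using the Euclidean relation $s_{l-1}=k_l s_l+s_{l+1}$ and the Fibonacci recursion $\Fb_l=k_l\Fb_{l-1}+\Fb_{l-2}$ in lockstep; the algebra is routine but must be done carefully at the parity changeovers and at the terminal step $s_{d+2}=0$. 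Finally, the reduction to $t\leq m/2$ and the $\tau$- and $\Omega$-invariance of $\rd$ (already noted in the text, with $\rd(t)=\rd(m-t)$) show this suffices to determine $\rd$ for all indecomposables.
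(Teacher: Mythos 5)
This theorem is not proved in the present paper: it is imported verbatim from \cite{HY2023}, so there is no in-text proof to compare against. Your outline is nevertheless the natural (and, given that Lemma \ref{lem-inSE} is precisely the quoted criterion from that reference, almost certainly the intended) route: reduce via Lemma \ref{lem-inSE} to finding the least $i$ with $\rem{jm}_n=\rem{js_1}_n<t$ (for $i=2j$) or $\rem{js_1}_n\geq n-t$ (for $i=2j+1$), and then use the identity $\Fb_l s_1\equiv(-1)^l s_{l+1}\pmod n$ together with the one-sided best-approximation property of the convergents of $s_1/n$, carried through a simultaneous induction on both parity branches. The only slip is in the terminal case $d$ even, $t=s_{d+1}$: the first qualifying index is the odd one $2(\Fb_{d+1}-\Fb_d)+1$ (coming from $j=\Fb_{d+1}-\Fb_d$ on the odd branch, which beats $j=\Fb_{d+1}$ on the even branch), not $2(\Fb_{d+1}-\Fb_d)$ as written; with that correction the rigidity degree is indeed $2(\Fb_{d+1}-\Fb_d)$.
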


\section{Global dimensions of endomorphism algebras of generator-cogenerators} \label{sect-gld}
For an algebra $\Lambda$, its global dimension is denoted by $\gldim \Lambda$, that is, the supremum of the projective dimensions of $\Lambda$-modules. In this section, we consider the problem:  

\medskip
\noindent{\bf Problem:} {\em When does the endomorphism algebra of a generator-cogenerator has finite global dimension?}

\medskip  
Let $\Lambda$ be an algebra and let $M$ be a generator of $\Lambda$. For each $\Lambda$-module $X$, let $g_X: M_X \lra X$ be a right minimal $\add(M)$-approximation, that is, $M_X\in \add(M)$, $g_X$ is right minimal and $\Hom_{\Lambda}(M, g_X)$ is surjective. Since $M$ is a generator, the map $g_X$ must be surjective. We define
$$\Omega_M(X):=\Ker g_X.$$
$\Omega_M(X)$ is unique up to isomorphism since $g_X$ is right minimal. We define $\Omega^n_M(X)$ inductively by  $\Omega^0_M(X)=X$ and  $\Omega^{n}_M(X)=\Omega_M(\Omega^{n-1}_M(X))$. Thus there is a long  exact sequence $$0\lra \Omega^{n}_M(X)\lra M_{n-1}\lra\cdots\lra M_1\lra M_0\lra X\lra 0,\quad (*)$$
with $M_i\in \add(M)$ for $0\leq i\leq n-1$, and $(*)$ remains exact after applying $\Hom_{\Lambda}(M,-)$.
Then $$\Omega^n_{\End_{\Lambda}(M)}(\Hom_{\Lambda}(M,X))=\Hom_{\Lambda}(M,\Omega^{n}_M(X)).$$
 The {\em $M$-dimension} of $X$, denoted by $\Mdim{M}(X)$, is the minimal non-negative integer $n$ or $\infty$ such that $\Omega^n_M(X)\in \add(M)$ (see \cite{chen2016ortho}). 
$\Mdim{M}(X)$ is actually the projective dimension of $\Hom_{\Lambda}(M,X)$ as an $\End_{\Lambda}(M)$-module. If $\Omega^n_M(X)\in \add(M)$, then we call $(*)$ an {\em $\add (M)$-resolution} of $X$.

\medskip 
The following lemma is well-known, implicitly contained in \cite[Chapter III, Section 3]{Auslander1970}.
\begin{Lem}\label{AUS}
Let $M$ be a generator-cogenerator of $\modcat{\Lambda}$ and $d\geq2$. Then
$\gldim \End_{\Lambda}(M)\leq d$ if and only if $\Mdim{M}(X)\leq d-2$ for all indecomposable $\Lambda$-modules $X$. 
\end{Lem}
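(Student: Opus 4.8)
Set $\Gamma:=\End_{\Lambda}(M)$ and $F:=\Hom_{\Lambda}(M,-)\colon\modcat{\Lambda}\to\modcat{\Gamma}$. The plan is to first record the three basic facts that make the statement tractable: (i) $F$ is left exact; (ii) $F$ restricts to an additive equivalence $\add(M)\xrightarrow{\ \sim\ }\pmodcat{\Gamma}$ (in particular every finitely generated projective $\Gamma$-module has the form $F(M')$ with $M'\in\add(M)$, and the induced map $\Hom_{\Lambda}(M_1,M_0)\to\Hom_{\Gamma}(F(M_1),F(M_0))$ is bijective for $M_i\in\add(M)$); and (iii) $\Mdim{M}(X)=\projdim_{\Gamma}F(X)$, as already noted in the text above. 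With these in hand the claim is equivalent to: $\gldim\Gamma\leq d$ if and only if $\projdim_{\Gamma}F(X)\leq d-2$ for every indecomposable (equivalently, every) $\Lambda$-module $X$.

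For the ``if'' direction I would take an arbitrary $\Gamma$-module $N$ and a projective presentation $F(M_1)\xrightarrow{F(g)}F(M_0)\to N\to 0$ with $M_0,M_1\in\add(M)$ and $g\in\Hom_{\Lambda}(M_1,M_0)$, which exists by (ii). The key point is that, since $F$ is left exact, applying $F$ to $0\to\Ker g\to M_1\to M_0$ gives $\Ker(F(g))=F(\Ker g)$; consequently the second syzygy $\Omega^2_{\Gamma}(N)$ is a direct summand of $F(\Ker g)$. Hence $\projdim_{\Gamma}N\leq 2+\projdim_{\Gamma}\Omega^2_{\Gamma}(N)\leq 2+\projdim_{\Gamma}F(\Ker g)\leq 2+(d-2)=d$, and since $N$ was arbitrary, $\gldim\Gamma\leq d$.

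For the ``only if'' direction, which is where the cogenerator hypothesis enters, I would take a $\Lambda$-module $X$ and choose an injective copresentation $0\to X\to I^0\to I^1$ in $\modcat{\Lambda}$ (which exists since $\Lambda$ is an Artin algebra). Because $M$ is a cogenerator, $I^0,I^1\in\add(D(\Lambda))\subseteq\add(M)$, so $F(I^0),F(I^1)\in\pmodcat{\Gamma}$ by (ii). Applying the left exact functor $F$ yields an exact sequence $0\to F(X)\to F(I^0)\to F(I^1)\to C\to 0$ of $\Gamma$-modules with $F(I^0),F(I^1)$ projective. Splitting it into two short exact sequences exhibits $F(X)$, up to projective direct summands, as a second syzygy of $C$, whence $\projdim_{\Gamma}F(X)\leq\max(\projdim_{\Gamma}C-2,0)\leq\max(\gldim\Gamma-2,0)\leq d-2$, the last inequality using $d\geq 2$. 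This gives both implications.

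The computations involved are light. The only points needing care are avoiding the (generally false) identification of $\Coker(F(g))$ with $F(\Coker g)$ — circumvented by working with \emph{kernels} and the left-exactness of $F$ rather than with cokernels — and keeping track of syzygies up to projective direct summands when the chosen presentation/copresentation is not minimal. I do not anticipate a genuine obstacle beyond this bookkeeping; fact (ii), the equivalence $\add(M)\simeq\pmodcat{\Gamma}$, is the standard input and I would simply invoke it from the setup.
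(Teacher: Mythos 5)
Your proof is correct, and it is precisely the classical Auslander argument that the paper invokes without proof by citing Auslander's 1970 Queen Mary notes: the equivalence $\add(M)\simeq\pmodcat{\End_{\Lambda}(M)}$ together with left-exactness of $\Hom_{\Lambda}(M,-)$ for the ``if'' direction, and the cogenerator property applied to an injective copresentation for the ``only if'' direction. Both implications are handled exactly as in the standard treatment, so there is nothing to add.
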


\begin{Def}
Let $M$ be a generator-cogenerator of $\modcat{\Lambda}$ and let $X$ be a $\Lambda$-module.  We say that $X$ is {\bf $M$-periodic} if  there is a positive integer $r$ such that $X\in\add \Omega^r_M(X)$.
\end{Def}
It is clear that $\Mdim{M}(X)=\infty$ and thus $\gldim\End_{\Lambda}(M)=\infty$ if $X$ is $M$-periodic.

\begin{Lem}\label{dim}
Let $M$ be a generator-cogenerator of $\modcat{\Lambda}$, and let 
$$0\lra X_m\lra\cdots\lra X_0\lra Y\lra 0$$ be an exact sequence of $\Lambda$-modules. Assume that the sequence is $\Hom_{\Lambda}(M,-)$-exact. Then $$\Mdim{M}(Y)\leq \max\{\Mdim{M}(X_i)\mid 0\leq i\leq m\}+m.$$
\end{Lem}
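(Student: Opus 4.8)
The natural approach is induction on the length $m$ of the exact sequence. For $m=1$ we have a short exact sequence $0\to X_1\to X_0\to Y\to 0$ that stays exact after $\Hom_\Lambda(M,-)$, and we want $\Mdim{M}(Y)\le\max\{\Mdim{M}(X_0),\Mdim{M}(X_1)\}+1$. The idea is to build an $\add(M)$-resolution of $Y$ by comparing with those of $X_0$ and $X_1$: take a right minimal $\add(M)$-approximation $M_Y\to Y$, lift it through $X_0\to Y$ (possible because $\Hom_\Lambda(M,X_0)\to\Hom_\Lambda(M,Y)$ is surjective), and apply the horseshoe-type argument to the resulting commutative diagram. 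One obtains, via the snake lemma, a short exact sequence $0\to\Omega_M(X_1)\to\Omega_M(X_0)\oplus(\text{something in }\add M)\to\Omega_M(Y)\to 0$ that is again $\Hom_\Lambda(M,-)$-exact. Iterating (or invoking Lemma~\ref{dim} in the length-$1$ case recursively) then gives $\Mdim{M}(Y)\le\max\{\Mdim{M}(X_0),\Mdim{M}(X_1)\}+1$, since once $\Omega_M^j(X_0)$ and $\Omega_M^j(X_1)$ both lie in $\add(M)$ the comparison sequence shows $\Omega_M^{j+1}(Y)\in\add(M)$.

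For the inductive step with $m\ge 2$, I would break the long exact sequence $0\to X_m\to\cdots\to X_0\to Y\to 0$ at the image $W=\Img(X_1\to X_0)=\Ker(X_0\to Y)$ into two pieces: the short exact sequence $0\to W\to X_0\to Y\to 0$ and the exact sequence $0\to X_m\to\cdots\to X_1\to W\to 0$ of length $m-1$. The point to check is that both pieces remain $\Hom_\Lambda(M,-)$-exact: this follows from a diagram chase, since $\Hom_\Lambda(M,-)$ applied to the whole complex is exact, so the connecting maps behave well at $W$. By the induction hypothesis applied to the shorter sequence, $\Mdim{M}(W)\le\max_i\{\Mdim{M}(X_i)\mid 1\le i\le m\}+(m-1)$, and by the $m=1$ case applied to $0\to W\to X_0\to Y\to 0$, $\Mdim{M}(Y)\le\max\{\Mdim{M}(X_0),\Mdim{M}(W)\}+1\le\max_i\{\Mdim{M}(X_i)\mid 0\le i\le m\}+m$, as desired.

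The main obstacle, and the place requiring genuine care rather than formal bookkeeping, is verifying that the auxiliary short exact sequences produced along the way — both the splitting $0\to W\to X_0\to Y\to 0$ in the inductive step and the syzygy-comparison sequence in the base case — are $\Hom_\Lambda(M,-)$-exact. For the base-case comparison sequence one must check that the map $M_Y\oplus M_0\to X_0$ (coming from the approximation of $Y$ and of $X_0$) is a right $\add(M)$-approximation of $X_0$, or at least that its kernel maps onto $\Omega_M(Y)$ with $\Hom_\Lambda(M,-)$-exactness preserved; this is where right minimality of approximations and the generator hypothesis $\Lambda\in\add(M)$ are used. Once this exactness is in hand everywhere, the dimension shift is routine. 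I would also note the convention that $\Mdim{M}(0)=0$ (or $-\infty$), so that vanishing terms cause no trouble in the maxima.
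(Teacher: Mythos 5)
Your proof is correct, but it takes a more hands-on route than the paper. The paper's own proof is essentially two lines: apply $\Hom_{\Lambda}(M,-)$ to the whole sequence (which stays exact by hypothesis), use the identification $\Mdim{M}(X)=\projdim_{\End_{\Lambda}(M)}\Hom_{\Lambda}(M,X)$ recorded just before Lemma~\ref{AUS}, and quote the classical bound on the projective dimension of the cokernel of a long exact sequence of modules over a ring. What you do is reprove that classical bound inside $\modcat{\Lambda}$, at the level of $\add(M)$-approximations and relative syzygies $\Omega_M$, rather than transporting everything to $\End_{\Lambda}(M)$-modules. The trade-off is exactly the one you identify: your version is self-contained but obliges you to check by hand that the auxiliary sequences are $\Hom_{\Lambda}(M,-)$-exact, whereas the paper outsources all of that to the standard fact. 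Those checks do go through: left exactness of $\Hom_{\Lambda}(M,-)$ identifies $\Hom_{\Lambda}(M,W)$ with $\Ker\bigl(\Hom_{\Lambda}(M,X_0)\to\Hom_{\Lambda}(M,Y)\bigr)$, which by hypothesis is the image of $\Hom_{\Lambda}(M,X_1)$, so both spliced pieces remain $\Hom_{\Lambda}(M,-)$-exact; and the horseshoe comparison sequence stays $\Hom_{\Lambda}(M,-)$-exact by the nine-lemma-type chase after applying the functor. One small point of hygiene: in the base case the clean choice of middle approximation is $M_{X_1}\oplus M_Y\to X_0$ (approximations of the two end terms, with $M_Y\to Y$ lifted through the surjection $\Hom_{\Lambda}(M,X_0)\to\Hom_{\Lambda}(M,Y)$), not $M_{X_0}\oplus M_Y$ as you write at one point; with that choice the kernel is a right $\add(M)$-approximation kernel of $X_0$, hence $\Omega_M(X_0)$ plus a summand in $\add(M)$, and the dimension shift is immediate.
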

\begin{proof}
Applying $\Hom_{\Lambda}(M,-)$, we get an exact sequence
$$0\lra \Hom_{\Lambda}(M, X_m)\lra\cdots\lra\Hom_{\Lambda}(M, X_0)\ra \Hom_{\Lambda}(M, Y)\ra 0.$$
By standard homological algebra, as $\End_{\Lambda}(M)$-modules, we have
$$\projdim \Hom_{\Lambda}(M, Y)\leq \max\{\projdim \Hom_{\Lambda}(M, X_i)\mid 0\leq i\leq m\}+m.$$
The lemma follows. 
\end{proof}

The following proposition will be useful to find generator-cogenerators whose endomorphsim algebras have finite global dimensions.

\begin{Prop}\label{dim_prop}
Let  $M$ be a generator-cogenerator of $\modcat{\Lambda}$ and let $X$ be a $\Lambda$-module. Assume that $\Mdim{M}(X)=r<\infty$.  Then $\gldim \End_{\Lambda}(M)\leq \gldim \End_{\Lambda}(M\oplus X)+r$.
\end{Prop}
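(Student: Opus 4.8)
The plan is to reduce everything to the $M$-dimension characterization of global dimension given by Lemma~\ref{AUS}. Since $M$ is a generator-cogenerator and $N:=M\oplus X$ is too, and since $\add(M)\subseteq\add(N)$, it suffices to control $\Mdim{M}(W)$ in terms of $\Mdim{N}(W)$ for an arbitrary indecomposable $\Lambda$-module $W$, together with the fixed number $r=\Mdim{M}(X)$. The key observation is that an $\add(N)$-resolution of $W$ is built from modules in $\add(M\oplus X)$, and each occurrence of a summand from $\add(X)$ can be "resolved further" into $\add(M)$ using the hypothesis $\Mdim{M}(X)=r<\infty$; after splicing these resolutions together, the length grows by at most $r$.

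Concretely, first I would take a minimal $\add(N)$-resolution
$$0\lra N_e\lra N_{e-1}\lra\cdots\lra N_0\lra W\lra 0,$$
which is $\Hom_{\Lambda}(N,-)$-exact, where $e=\Mdim{N}(W)=\Mdim{N\oplus W}(W)\leq\gldim\End_{\Lambda}(N)-2$ (or, if $\Mdim{N}(W)=\infty$, then $\gldim\End_{\Lambda}(N)=\infty$ and there is nothing to prove). Being $\Hom_{\Lambda}(N,-)$-exact, it is in particular $\Hom_{\Lambda}(M,-)$-exact. Each $N_i$ decomposes as $N_i'\oplus N_i''$ with $N_i'\in\add(M)$ and $N_i''\in\add(X)$; hence $\Mdim{M}(N_i)=\max\{\Mdim{M}(N_i'),\Mdim{M}(N_i'')\}\leq\max\{0,r\}=r$, using $\Mdim{M}(X)=r$ and that $M$-dimension of a direct sum is the max of the $M$-dimensions of the summands. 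Now apply Lemma~\ref{dim} to this exact sequence: it yields
$$\Mdim{M}(W)\leq\max\{\Mdim{M}(N_i)\mid 0\leq i\leq e\}+e\leq r+e=r+\Mdim{N}(W).$$
Taking the supremum over all indecomposable $W$ and invoking Lemma~\ref{AUS} in both directions gives $\gldim\End_{\Lambda}(M)\leq\gldim\End_{\Lambda}(N)+r$ (the cases where either side is small, i.e. $\leq 2$, are handled directly since $M$ and $N$ are generator-cogenerators so both global dimensions are either $0$, which cannot occur for non-semisimple $\Lambda$, or $\geq 2$).

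The main obstacle, though a minor one, is making sure the resolution used in Lemma~\ref{dim} is genuinely $\Hom_{\Lambda}(M,-)$-exact rather than merely exact: this is where it matters that we start from an $\add(N)$-resolution of $W$ (which is $\Hom_{\Lambda}(N,-)$-exact by construction) rather than an arbitrary exact sequence with terms in $\add(N)$, and that $\add(M)\subseteq\add(N)$ so $\Hom_{\Lambda}(N,-)$-exactness implies $\Hom_{\Lambda}(M,-)$-exactness. A second small point is the additivity $\Mdim{M}(A\oplus B)=\max\{\Mdim{M}(A),\Mdim{M}(B)\}$, which follows because a right minimal $\add(M)$-approximation of a direct sum is the direct sum of the right minimal approximations, so $\Omega_M$ commutes with finite direct sums up to summands; and the boundary cases for small global dimension, which should be dispatched with a sentence. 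Everything else is the bookkeeping already packaged in Lemmas~\ref{AUS} and~\ref{dim}.
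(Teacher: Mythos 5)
Your proposal is correct and follows essentially the same route as the paper: take an $\add(M\oplus X)$-resolution of length at most $\gldim\End_{\Lambda}(M\oplus X)-2$ via Lemma \ref{AUS}, observe it is $\Hom_{\Lambda}(M,-)$-exact and that each term has $M$-dimension at most $r$, apply Lemma \ref{dim}, and convert back via Lemma \ref{AUS}. The extra care you take over $\Hom_{\Lambda}(M,-)$-exactness, the additivity of $\Mdim{M}$ over direct sums, and the small-dimension boundary cases is exactly what the paper leaves implicit.
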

\begin{proof}
There is nothing to prove when $\gldim \End_{\Lambda}(M\oplus X)=\infty$. Assume that $\gldim \End_{\Lambda}(M\oplus X)=n+2$. For each $\Lambda$-module $Y$, there is an $\add(M\oplus X)$-resolution  
$$0\lra Z_m\lra\cdots\lra Z_1\lra Z_0\lra Y\lra 0$$
with $m\leq n$ by Lemma \ref{AUS}. Since the sequence is also $\Hom_{\Lambda}(M, -)$-exact, by Lemma \ref{dim}, 
$$\Mdim{M}(Y)\leq \max\{\Mdim{M}(Z_i)\mid 0\leq i\leq m\}+m\leq r+m\leq r+n.$$
Hence $\gldim \End_{\Lambda}(M)\leq r+n+2=\gldim \End_{\Lambda}(M\oplus X)+r$ by Lemma \ref{AUS}.
\end{proof}

Together with a result in \cite{Hu2006}, we have the following theorem. 
\begin{Theo}
\label{theo-gldimARseq}
Let $0\ra X\lra Y\ra Z\ra 0$ be an Auslander-Reiten sequence over an Artin algebra $\Lambda$, and let $M$ be a generator-cogenerator of $\modcat{\Lambda}$ with $Y\in\add(M)$. Assume that $N=M\oplus X\oplus Z$. If either $X$ or $Z$ belongs to $\add(M)$, then $0\leq \gldim \End_{\Lambda}(M)-\gldim\End_{\Lambda}(N)\leq 1$. In particular, $\End_{\Lambda}(M)$ has finite global dimension if and only if so does $\End_{\Lambda}(N)$.
\end{Theo}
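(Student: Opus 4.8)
The plan is to deduce Theorem~\ref{theo-gldimARseq} by combining the one-directional estimate of Proposition~\ref{dim_prop} with the matching estimate going the other way, which is where the cited result of \cite{Hu2006} enters. The setup is symmetric enough that we may assume without loss of generality that $X\in\add(M)$ (the case $Z\in\add(M)$ is handled dually, using that $0\ra X\ra Y\ra Z\ra 0$ is an AR-sequence and passing to the opposite algebra, or simply by the mirror argument with cogenerators). So suppose $X\in\add(M)$, hence $X\oplus Y\in\add(M)$ and $N=M\oplus Z$.

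First I would establish the inequality $\gldim\End_\Lambda(M)\le \gldim\End_\Lambda(N)+1$. Since $N=M\oplus Z$, it suffices by Proposition~\ref{dim_prop} to show $\Mdim{M}(Z)\le 1$, and in fact $\Mdim{M}(Z)\le 1$ is immediate from the AR-sequence: the map $Y\to Z$ is a right $\add(M)$-approximation of $Z$ because $Y\in\add(M)$ and, $Z$ being non-projective, every non-split epi onto $Z$ — in particular every map from a module in $\add(M)$ that is not a split epi — factors through $Y\to Z$; one checks this gives right-minimality after stripping off any split summand, so $\Omega_M(Z)\cong X\in\add(M)$ and $\Mdim{M}(Z)\le 1$. (If $Z$ happened to already lie in $\add(M)$ then $\Mdim{M}(Z)=0$ and $N$ and $M$ have the same additive closure, making the statement trivial; so assume $Z\notin\add(M)$.) Then Proposition~\ref{dim_prop} with $r=1$ gives $\gldim\End_\Lambda(M)\le\gldim\End_\Lambda(N)+1$.

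Next I would establish the reverse inequality $\gldim\End_\Lambda(N)\le\gldim\End_\Lambda(M)$, i.e. that adding the summand $Z$ does not increase the global dimension. This is precisely the content of the result from \cite{Hu2006}: when $M$ is a generator-cogenerator, $0\ra X\ra Y\ra Z\ra 0$ is an AR-sequence with $X, Y\in\add(M)$, then $\End_\Lambda(M\oplus Z)$ has global dimension at most that of $\End_\Lambda(M)$ — the point being that an $\add(M)$-resolution of an arbitrary module $W$ can be converted into an $\add(N)$-resolution by splicing in the short exact sequence that expresses $\Omega_M(W')$-type syzygies through $Z$, without lengthening it, because $Z$ sits one step above $X$ in the approximation order. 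I would cite this directly rather than reprove it. Combining the two inequalities yields $0\le\gldim\End_\Lambda(M)-\gldim\End_\Lambda(N)\le1$, and the ``in particular'' clause about finiteness is then immediate since each of the two global dimensions is finite exactly when the other is.

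The main obstacle I anticipate is the bookkeeping around minimality and split summands when identifying $Y\to Z$ as the right minimal $\add(M)$-approximation: one must be careful that $Y$ itself might decompose with a summand not needed for the approximation, so the honest syzygy $\Omega_M(Z)$ is $X$ only after discarding a common summand of $Y$, and one should verify $X$ has no projective (hence no $\add(M)$-redundant) issues — but since $X$ is the left-hand term of an AR-sequence it is indecomposable and non-injective, and in any case lies in $\add(M)$ by hypothesis, so $\Mdim{M}(Z)\le 1$ holds regardless of these subtleties. The other delicate point is making the reduction to the case $X\in\add(M)$ genuinely symmetric; invoking the cogenerator hypothesis and the opposite-algebra duality (an AR-sequence over $\Lambda$ becomes one over $\Lambda\opp$, generators/cogenerators swap, and $\End$ passes to its opposite, which has the same global dimension) closes that gap cleanly.
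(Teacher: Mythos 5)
Your proposal is correct and follows essentially the same route as the paper: the inequality $\gldim\End_{\Lambda}(N)\leq\gldim\End_{\Lambda}(M)$ is quoted from \cite[Corollary 1.2]{Hu2006}, the bound $\Mdim{M}(Z)\leq 1$ is extracted from the almost split sequence (the paper phrases it via exactness of $0\to\Hom_{\Lambda}(M,X)\to\Hom_{\Lambda}(M,Y)\to\Hom_{\Lambda}(M,Z)\to 0$ when $Z\notin\add(M)$, you via the approximation property directly, which is the same fact), Proposition~\ref{dim_prop} then gives the reverse inequality, and the case $Z\in\add(M)$, $X\notin\add(M)$ is reduced by the duality $D$ and the opposite algebra exactly as you describe.
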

\begin{proof}
Since $Y\in\add(M)$, it follows from \cite[Corollary 1.2]{Hu2006} that $\gldim\End_{\Lambda}(N)\leq\gldim\End_{\Lambda}(M)$. Now assume that $X\in\add(M)$. If $Z\in\add(M)$, then $\add(M)=\add(N)$ and $\End_{\Lambda}(M)$ and $\End_{\Lambda}(N)$ are Morita equivalent, and thus have the same global dimension. If $Z\not\in\add(M)$, then $\add(N)=\add(M\oplus Z)$. The algebras $\End_{\Lambda}(N)$ and $\End_{\Lambda}(M\oplus Z)$ are Morita equivalent. Since $0\ra X\ra Y\ra Z\ra 0$ is an Auslander-Reiten sequence and $Z\not\in\add(M)$, the sequence
$$0\lra \Hom_{\Lambda}(M,X)\lra \Hom_{\Lambda}(M,Y)\lra \Hom_{\Lambda}(M,Z)\lra 0$$
is exact, and consequently $\Mdim{M}(Z)\leq 1$. It follows from Proposition \ref{dim_prop} that 
$$\gldim \End_{\Lambda}(M)\leq \gldim \End_{\Lambda}(M\oplus Z)+1=\gldim \End_{\Lambda}(N)+1.$$ If $Z\in\add(M)$ and $X\not\in\add(M)$, then applying the duality $D$ results in an Auslander-Reiten sequence $0\ra D(Z)\ra D(Y)\ra D(X)\ra 0$ of  $\Lambda\opp$-modules, where $\Lambda\opp$ is the opposite algebra of $\Lambda$. The $\Lambda\opp$-module $D(M)$ is again a generator-cogenerator with $\End_{\Lambda\opp}(D(M))\cong\End_{\Lambda}(M)\opp$. Since an algebra and its opposite algebra have the same global dimension, the above proof also gives $\gldim \End_{\Lambda}(M)\leq \gldim \End_{\Lambda}(N)+1$. 
\end{proof}

Theorem \ref{theo-gldimARseq} can be applied to give the following {\bf knitting process}: Given a generator-cogenerator $M$ over an algebra $\Lambda$. Let $\Gamma(\Lambda)$ be the Auslander-Reiten quiver of $\Lambda$. If $X$ is an indecomposable direct summand of $M$ such that all the immediate successors of $X$ in $\Gamma(\Lambda)$ are in $\add(M)$, then we can add $\tau^{-}X$ to $M$ and consider $M\oplus \tau^{-}X$ instead. By Theorem \ref{theo-gldimARseq} $\gldim\End_{\Lambda}(M)<\infty$ if and only if $\gldim\End_{\Lambda}(M\oplus\tau^-X)<\infty$. Dually, if all the immediate predecessors of $X$ in $\Gamma(\Lambda)$ are in $\add(M)$, then $\gldim\End_{\Lambda}(M)$ is finite if and only if $\End_{\Lambda}(M\oplus\tau X)$ is finite. For instance, suppose that  $\Lambda=A_{3,4}$ is the Nakayama algebra with $3$ simple modules and Loewy length $4$ and that 
$$M=\Lambda\oplus \bigoplus_{t=1}^{3}\pi(0,t).$$
The Auslander-Reiten quiver is as follows, where the vertex $(0,t)$ is identified with $(3,t)$ for all $1\leq t\leq 4$. 
\begin{center}
   \begin{tikzpicture}[scale=0.5]
\fill (-1,1)node[dot]{} node[right]{${\scriptstyle (0,1)}$};\fill (-2,2)node[dot]{}node[right]{${\scriptstyle (0,2)}$};\draw[->] (-1.9,1.9)--++(0.8,-0.8);\fill (-3,3)node[dot]{}node[right]{${\scriptstyle (0,3)}$};\draw[->] (-2.9,2.9)--++(0.8,-0.8);\fill (-4,4)node[dot]{}node[right]{${\scriptstyle (0,4)}$};\draw[->] (-3.9,3.9)--++(0.8,-0.8);\fill (-3,1)node[cir]{};\draw[->] (-2.9,1.1)--++(0.8,0.8);\fill (-4,2)node[cir]{};\draw[->] (-3.9,1.9)--++(0.8,-0.8);\draw[->] (-3.9,2.1)--++(0.8,0.8);\fill (-5,3)node[cir]{};\draw[->] (-4.9,2.9)--++(0.8,-0.8);\draw[->] (-4.9,3.1)--++(0.8,0.8);\fill (-6,4)node[dot]{};\draw[->] (-5.9,3.9)--++(0.8,-0.8);\fill (-5,1)node[cir]{};\draw[->] (-4.9,1.1)--++(0.8,0.8);\fill (-6,2)node[cir]{};\draw[->] (-5.9,1.9)--++(0.8,-0.8);\draw[->] (-5.9,2.1)--++(0.8,0.8);\fill (-7,3)node[cir]{};\draw[->] (-6.9,2.9)--++(0.8,-0.8);\draw[->] (-6.9,3.1)--++(0.8,0.8);\fill (-8,4)node[dot]{};\draw[->] (-7.9,3.9)--++(0.8,-0.8);\fill (-7,1)node[dot]{}node[left]{${\scriptstyle (3,1)}$};\draw[->] (-6.9,1.1)--++(0.8,0.8);\fill (-8,2)node[dot]{}node[left]{${\scriptstyle (3,2)}$};\draw[->] (-7.9,1.9)--++(0.8,-0.8);\draw[->] (-7.9,2.1)--++(0.8,0.8);\fill (-9,3)node[dot]{}node[left]{${\scriptstyle (3,3)}$};\draw[->] (-8.9,2.9)--++(0.8,-0.8);\draw[->] (-8.9,3.1)--++(0.8,0.8);\fill (-10,4)node[dot]{}node[left]{${\scriptstyle (3,4)}$};\draw[->] (-9.9,3.9)--++(0.8,-0.8);
\end{tikzpicture}
\end{center}
The black vertices are indecomposable direct summands of $M$. It is easy to see that the knitting process eventually adds all white vertices to $M$, and the resulting generator-cogenerator $N$ is a direct sum of all indecomposable modules on the Auslander-Reiten quiver. The knitting process guarantees $\gldim\End_{\Lambda}(M)$ is finite if and only if so is $\gldim\End_{\Lambda}(N)$. However, $\End_{\Lambda}(N)$ is the Auslander algebra of $\Lambda$ which has global dimension at most $2$. Hence $\gldim\End_{\Lambda}(M)$ is finite. 

\medskip
Generally, let $M$ be a generator-cogenerator for a  non-simple indecomposable self-injective algebra $\Lambda$ of finite representation type. It is well-known that the stable Auslander-Reiten quiver $\Gamma_s(\Lambda)$ is of the form $\mathbb{Z}\Delta/G$, where $\Delta$ is a Dynkin graph and $G$ is a group of admissible automorphisms of $\mathbb{Z}\Delta$. We define a {\bf complete slice} to be a subquiver $C$ of $\Gamma_s(\Lambda)$ such that the underlying graph of $C$ is $\Delta$. For a $\Lambda$-module $M$, we say that $\add(M)$ contains a complete slice if there is a complete slice $C$ on $\Gamma_s(\Lambda)$ such that the indecomposable modules corresponding to the vertices of $C$ are all in $\add(M)$. 
\begin{Koro}
\label{coro-slice}
Let $\Lambda$ be a non-simple indecomposable self-injective algebra of finite representation type, and let  $M$ be a generator-cogenerator of $\Lambda$. If $\add(M)$ contains a complete slice, then $\gldim\End_{\Lambda}(M)<\infty$. 
\end{Koro}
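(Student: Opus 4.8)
The plan is to use the knitting process induced by Theorem \ref{theo-gldimARseq} to ``grow'' $\add(M)$ from a complete slice until it contains all indecomposable modules, and then invoke the fact that the Auslander algebra has global dimension at most $2$. More precisely, let $C$ be a complete slice contained in $\add(M)$, and consider the covering $\pi\colon \mathbb{Z}\Delta\to\mathbb{Z}\Delta/G=\Gamma_s(\Lambda)$. A complete slice on $\Gamma_s(\Lambda)$ lifts to a (convex) slice $\widetilde{C}$ in $\mathbb{Z}\Delta$ whose underlying graph is $\Delta$. The key combinatorial fact I would use is that in $\mathbb{Z}\Delta$, starting from any slice one can reach every vertex of $\mathbb{Z}\Delta$ by repeatedly applying the following move: pick a vertex $v$ of the current section all of whose immediate successors already lie in the section, and replace $v$ by $\tau^{-1}v$ (equivalently, the dual move with $\tau$); since $\Delta$ is a finite Dynkin graph, every section has a source with this property, and iterating the move sweeps the section forward (and, dually, backward) through the whole of $\mathbb{Z}\Delta$.

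The second step is to transport this move to $\Gamma_s(\Lambda)$ and to modules. Whenever $X$ is an indecomposable summand of the current module $M'$ such that all immediate successors of $X$ in the full Auslander--Reiten quiver $\Gamma(\Lambda)$ lie in $\add(M')$, Theorem \ref{theo-gldimARseq} applied to the Auslander--Reiten sequence $0\to X\to Y\to \tau^{-}X\to 0$ (whose middle term $Y$ is a sum of those successors, hence in $\add(M')$, and with $X\in\add(M')$) shows that $\gldim\End_{\Lambda}(M')<\infty$ if and only if $\gldim\End_{\Lambda}(M'\oplus\tau^{-}X)<\infty$. Here one must be a little careful because $\Lambda$ is self-injective, so projective-injective modules sit in $\Gamma(\Lambda)$ but not in $\Gamma_s(\Lambda)$; however, $M$ already contains all the projective-injectives (it is a generator-cogenerator), and the Auslander--Reiten sequences ending in a non-projective module whose top/socle is controlled correctly still let the knitting pass through these vertices — this is exactly what happens in the worked example $A_{3,4}$ above. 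So, starting from $M$ (which contains $C$ and all projective-injectives), finitely many applications of the knitting move produce a module $N$ with $\add(N)=\add(\bigoplus_{i}E_i)$, the sum of all indecomposable $\Lambda$-modules, and with $\gldim\End_{\Lambda}(M)<\infty \iff \gldim\End_{\Lambda}(N)<\infty$.

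The final step is immediate: $\End_{\Lambda}(N)$ is (Morita equivalent to) the Auslander algebra of the representation-finite algebra $\Lambda$, which has global dimension $\leq 2<\infty$. Hence $\gldim\End_{\Lambda}(M)<\infty$, as claimed.

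I expect the main obstacle to be the bookkeeping around the passage between the stable Auslander--Reiten quiver $\Gamma_s(\Lambda)$ (on which the complete slice lives and where the lifting to $\mathbb{Z}\Delta$ is clean) and the full quiver $\Gamma(\Lambda)$ (on which the knitting process of Theorem \ref{theo-gldimARseq} operates, and where the projective-injective vertices intervene): one has to check that a slice in $\Gamma_s(\Lambda)$ together with the projective-injectives really does admit a legal knitting move at each stage and that the process terminates after sweeping every indecomposable. The honest way to handle this is to argue on the covering $\mathbb{Z}\Delta$, noting that $\add(M)$ containing a complete slice means the lifted summands contain a full section of $\mathbb{Z}\Delta$, that admissible moves on sections of $\mathbb{Z}\Delta$ generate translations in both directions, and that a periodicity/finiteness argument ($G$ acts with finite fundamental domain, $\Lambda$ is representation-finite) guarantees only finitely many moves are needed to exhaust all $G$-orbits; the self-injective subtlety then only affects finitely many extra Auslander--Reiten sequences, each still covered by Theorem \ref{theo-gldimARseq} since $M$ is a cogenerator. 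Once that combinatorial lemma is in place, the corollary follows formally.
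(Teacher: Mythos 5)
Your proposal is correct and follows essentially the same route as the paper: knit outward from the complete slice using Theorem \ref{theo-gldimARseq} (the projective-injectives being harmless since $M$ is a generator-cogenerator) until all indecomposables are obtained, then conclude via the Auslander algebra having global dimension at most $2$. The only difference is cosmetic: the paper defers the claim that the knitting exhausts all indecomposables to a case-by-case check over Dynkin types, whereas you sketch the uniform argument on the covering $\mathbb{Z}\Delta$ that any section can be swept through the whole translation quiver.
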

\begin{proof}
Depending on the Dynkin type, case by case, one can verify that the knitting process stops only when all indecomposable modules are added to $M$. The endomorphism algebra of the resulting module is the Auslander algebra of $\Lambda$, and has global dimension $2$. Hence $\End_{\Lambda}(M)$ has finite global dimension.  
\end{proof}

Finally, we need the following result in later proofs. 
\begin{Lem}\label{HS}
Let $\Lambda$ be a self-injective algebra and $N$ a $\Lambda$-module. Then $\End_{\Lambda}(\Lambda\oplus N)$ and $\End_{\Lambda}(\Lambda\oplus \Omega_{\Lambda}(N))$ have the same global dimension and dominant dimension.
\end{Lem}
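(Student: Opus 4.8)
\textbf{Proof plan for Lemma \ref{HS}.}
The plan is to exploit the fact that over a self-injective algebra $\Lambda$ the syzygy operator $\Omega_{\Lambda}$ is an equivalence on the stable category $\stmodcat{\Lambda}$, together with the fact that $\Lambda$ itself is projective-injective, so that adding or removing projective summands does not affect stable-category considerations. First I would reduce to the case where $N$ has no nonzero projective direct summands: write $N = N' \oplus P$ with $P$ projective and $N'$ having no projective summands; then $\add(\Lambda \oplus N) = \add(\Lambda \oplus N')$, so $\End_{\Lambda}(\Lambda\oplus N)$ and $\End_{\Lambda}(\Lambda\oplus N')$ are Morita equivalent, and likewise $\Omega_{\Lambda}(N) = \Omega_{\Lambda}(N')$ up to projective summands (and again adding projectives does not change $\add(\Lambda\oplus-)$). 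So it suffices to treat $M:=\Lambda\oplus N$ and $M':=\Lambda\oplus\Omega_{\Lambda}(N)$ with $N$ having no projective summands and $\Omega_\Lambda(N)$ taken to be the minimal (reduced) syzygy.

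Next I would establish the key comparison of $M$-dimensions. Using the description $\Omega_M(X) = \Ker(g_X)$ for a right minimal $\add(M)$-approximation $g_X \colon M_X \to X$ and the formula $\Omega^n_{\End_{\Lambda}(M)}(\Hom_{\Lambda}(M,X)) = \Hom_{\Lambda}(M,\Omega^n_M(X))$ recalled before Lemma \ref{AUS}, I want to show that for every indecomposable $\Lambda$-module $X$ we have $\Mdim{M}(X) < \infty$ if and only if $\Mdim{M'}(\Omega_{\Lambda}^{-1} X) < \infty$, and more precisely that the two $M$-dimensions differ by a bounded amount (ideally are equal for non-projective $X$). The mechanism is that $\Omega_{\Lambda}$ sends a right minimal $\add(M)$-approximation of $X$ to a right minimal $\add(\Omega_{\Lambda}M)$-approximation of $\Omega_{\Lambda}X$ (this uses that $\Omega_{\Lambda}$ is a stable equivalence and that short exact sequences are ``preserved'' by $\Omega$ up to projective summands in the self-injective setting), and $\add(\Omega_{\Lambda}M) = \add(\Lambda\oplus\Omega_{\Lambda}N) = \add(M')$ since $\Omega_{\Lambda}\Lambda = 0$ as a stable object and $\Lambda$ is injective. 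Thus an $\add(M)$-resolution of $X$ maps under $\Omega_{\Lambda}$ to an $\add(M')$-resolution of $\Omega_{\Lambda}X$ of the same length (up to projective adjustments), and conversely via $\Omega^-_{\Lambda}$; since $\Omega_{\Lambda}$ is a bijection on non-projective indecomposables, $X$ ranges over all indecomposables iff $\Omega_{\Lambda}X$ does. By Lemma \ref{AUS} this yields $\gldim\End_{\Lambda}(M) \leq d$ iff $\gldim\End_{\Lambda}(M') \leq d$ for all $d \geq 2$, hence the global dimensions coincide. The statement about dominant dimension then follows from Theorem \ref{relat-rig-dom} once we know $\rd(N)=\rd(\Omega_{\Lambda}N)$, which is immediate from the remark in Section \ref{sect-preliminaries} that rigidity degree is invariant under $\Omega_{\Lambda}$ (equivalently $\SE(N)=\SE(\Omega_\Lambda N)$ since $\Ext^i_{\Lambda}(\Omega_\Lambda N,\Omega_\Lambda N)\cong\Ext^i_{\Lambda}(N,N)$ for $i>0$ over a self-injective algebra); one also notes $M$ and $M'$ are simultaneously generator-cogenerators since $\Omega_{\Lambda}$ preserves the property of being a cogenerator over a self-injective algebra (as $\Omega_{\Lambda}D(\Lambda)$ has all indecomposable injectives, equivalently projectives, in its $\add$ together with $\Lambda$... more carefully: $\add(M)\ni D(\Lambda)=\Lambda$ already, so both are generator-cogenerators automatically because $\Lambda=D(\Lambda)$'s indecomposable summands coincide up to the Nakayama permutation).

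The main obstacle I anticipate is bookkeeping of projective summands: $\Omega_{\Lambda}$ is only well-defined up to projectives, right minimal $\add(M)$-approximations need not be exactly carried to right minimal ones on the nose, and $\Omega_{\Lambda}$ of an exact sequence is exact only after possibly adding projective-injective summands to the middle and right terms. The clean way around this is to work in the stable category throughout: show that $\Mdim{M}(X)$ depends only on the image of $X$ in $\stmodcat{\Lambda}$ for $X$ non-projective (since $\Lambda\in\add(M)$, projective summands are absorbed at every stage of the $\add(M)$-resolution), and that $\Omega_{\Lambda}\colon\stmodcat{\Lambda}\to\stmodcat{\Lambda}$ is an autoequivalence carrying $\add(M)$ (mod projectives) onto $\add(M')$ (mod projectives). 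Then the equality of $M$-dimensions under $X\mapsto\Omega_{\Lambda}X$ is automatic and Lemma \ref{AUS} finishes the global-dimension claim; the dominant-dimension claim is a one-line consequence of M\"uller's theorem and $\Omega$-invariance of rigidity degree.
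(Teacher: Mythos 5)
Your strategy is correct, but it is genuinely different from the route the paper takes. The paper's proof is a two-line citation: by results of Hu--Xi on $\mathcal{D}$-split sequences, $\End_{\Lambda}(\Lambda\oplus N)$ and $\End_{\Lambda}(\Lambda\oplus\Omega_{\Lambda}(N))$ are almost $\nu$-stable derived equivalent, hence stably equivalent of Morita type, and both global dimension and dominant dimension are invariants of such equivalences. You instead give a hands-on argument: for the global dimension you compare $\add(M)$-resolutions with $\add(M')$-resolutions via the syzygy functor and invoke Lemma \ref{AUS}, and for the dominant dimension you combine M\"uller's theorem (Theorem \ref{relat-rig-dom}) with the $\Omega_{\Lambda}$-invariance of rigidity degree, $\Ext^i_{\Lambda}(N,N)\cong\Ext^i_{\Lambda}(\Omega_{\Lambda}N,\Omega_{\Lambda}N)$ for $i>0$. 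Your approach is more elementary and self-contained (it uses only tools already set up in Sections \ref{sect-preliminaries} and \ref{sect-gld}), at the cost of the projective-summand bookkeeping you correctly identify; the paper's approach is shorter but imports heavier machinery as a black box. The dominant-dimension half of your argument is essentially immediate and arguably cleaner than the citation.

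The one step you should write out carefully is the exact relation between $\Omega_{M'}(\Omega_{\Lambda}X)$ and $\Omega_{\Lambda}(\Omega_{M}X)$. Applying the horseshoe construction to the approximation sequence $0\to\Omega_M(X)\to M_X\to X\to 0$ yields an exact, $\Hom_{\Lambda}(M',-)$-exact sequence $0\to\Omega_{\Lambda}\Omega_M(X)\to\Omega_{\Lambda}(M_X)\oplus P\to\Omega_{\Lambda}X\to 0$ with $P$ projective; splitting the middle map into its right minimal part plus a zero part gives $\Omega_{\Lambda}\Omega_M(X)\cong\Omega_{M'}(\Omega_{\Lambda}X)\oplus W$ with $W\in\add(M')$, and since $\Omega_{\Lambda}$ restricts to a bijection between the non-projective indecomposables of $\add(M)$ and of $\add(M')$, one gets $\Omega^k_{M'}(\Omega_{\Lambda}X)\in\add(M')$ if and only if $\Omega^k_M(X)\in\add(M)$ for every $k$, hence $\Mdim{M'}(\Omega_{\Lambda}X)=\Mdim{M}(X)$. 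With that made explicit, Lemma \ref{AUS} finishes the global-dimension claim exactly as you propose.
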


\begin{proof}
By \cite[Corollary 3.7]{Hu2011} and \cite[Proposition 3.11]{Hu2010}, $\End_{\Lambda}(\Lambda\oplus N)$ and  $\End_{\Lambda}(\Lambda\oplus \Omega_{\Lambda}(N))$ are almost $\nu$-derived equivalent,
then by \cite[Theorem 5.3]{Hu2010}, they are stably equivalent of Morita type, thus they have the same global dimension and dominant dimension.
\end{proof}

\section{Rigidity dimensions of self-injective Nakayama algebras}\label{sec-rigdim}

This section is devoted to giving a proof of Theorem \ref{rigdim_all0},  completely determining the rigidity dimensions of self-injective Nakayama algebras $\Lambda=A_{n,m}$ with $m\geq n$.

\subsection{Find good generator-cogenerators combinatorially }\label{compute-mdim}
In this subsection we interpret  how to compute the global dimension of the endomorphism algebras combinatorially.
Let $M$ be a generator-cogenerator of $A_{n,m}$-mod, we know that the global dimension of $\End_{\Lambda}(M)$ is decided by $\Mdim{M}X$ for all indecomposable $A_{n,m}$-modules $X$ by Lemma \ref{AUS}. By the definition of $\Mdim{M}X$, the main point is to calculate $\Omega_M^i(X)$. This can be done combinatorially on the Auslander-Reiten quiver.

Recall that the stable AR-quiver $\Gamma_s(\Lambda)\cong \Z A_{m-1}/\langle \tau^n\rangle$ and the translation quiver $\mathbb{Z}A_{m-1}$ is coordinated  as in section \ref{sect-preliminaries}, we extend $\Z A_{m-1}$ by adding two layers with ordinate $m$ and $0$, denote by $\overline{\Z A_{m-1}}$. Then the vertices with ordinate $m$ correspond to the indecomposable projective modules, and the vertices with ordinate $0$ are considered virtual vertices, whose corresponding modules can be regarded as zero modules. This augmentation will facilitate the later analysis.
The natural morphism from $\overline{\Z A_{m-1}}$ to $\overline{\Z A_{m-1}}/\langle \tau^n\rangle$ is still denoted by $\pi$, that is, $\pi(x+jn,t)=L^t_x$ for all $j\in \Z$, $1\leq x\leq n, 0\leq t\leq m$, where $L_x^t$ is the uniserial $A_{n,m}$-module with top $S_x$ and length $t$. Note that all indecomposable $A_{n,m}$-modules are of this form. 

Let $M$ be a generator-cogenerator of $A_{n,m}$, and let 
$$\widehat{M}:=\{(x,t)\in \overline{\Z A_{m-1}}\mid \pi(x,t)\in\add(M)\}.$$
Note that, for each $x\in\mathbb{Z}$, $\pi(x,0)$ is the zero module in $\add(M)$. This means that $\widehat{M}$ always contains the whole bottom layer. Since $M$ is a generator, $\add(M)$ contains all indecomposable projective modules. Hence $\widehat{M}$ also contains the whole top layer of $\overline{\Z A_{m-1}}$.  

For each indecomposable $A_{n,m}$-module $L_x^t$, $\Omega_M(L_x^t)$ can be calculated combinatorially as follows. Consider the vertex $(x,t)$, let $h^t_x$  be the minimal $h>t$ such that $(x,h)\in \widehat{M}$.  
$h^t_x$ is well defined since $(x,m)$ is always in $\widehat{M}$. We can look at the following rectangle in $\overline{\mathbb{Z}A_{m-1}}$ determined by $(x,t)$ and $(x+t,h_x^t-t)$, denoted by $E_x^t$. 
$$\begin{tikzpicture}[scale=0.6]
\draw [-] (5,0)--(7,2)--(4,5)--(2,3)-- cycle;
\fill[fill=black]  (5,0) circle (1pt) node[right] {$\scriptstyle(x+t,0)$};
\fill[fill=black]  (7,2) circle (1pt) node[right] {$\scriptstyle(x,t)$};
\fill[fill=black]  (4,5) circle (1pt) node[right] {$\scriptstyle(x,h^t_x)$};
\fill[fill=black]  (2,3) circle (1pt) node[left] {$\scriptstyle(x+t,h^t_x-t)$};
\node at (5,3) {${\scriptstyle E^t_x}$};
\fill[fill=black]  (2,3) circle (1pt);
\draw [-,fill=gray!10] (6,1)--(5,2)--(4,1)--(5,0)--cycle;
\fill[fill=black]  (6,1) circle (1pt) node[right] {$\scriptstyle(y,t-y+x)$};
\fill[fill=black]  (5,2) circle (1pt) node[right] {$\scriptstyle(y,w_y)$};
\fill[fill=black]  (4,1) circle (1pt) node[right] {};
\node at (5,1) {${\scriptstyle E^{t-y+x}_y}$};
\draw [-,dashed] (5,2)--(3,4);
\fill[fill=black]  (3,4) circle (1pt) node[left] {$\scriptstyle(y,h^t_x-y+x)$};
\end{tikzpicture}$$
Select $(y, w_y)$ from $E^t_x$ which satisfies the following two conditions:
\begin{itemize}
\item
 $x<y\leq x+t$ is  closest to $x$ such that there are some vertices $(y,t')$ in $\widehat{M}\cap E^t_x$;

\item
 Among these vertices, choose $(y,w_y)$ with $w_y$ smallest.
\end{itemize}
If $(y, w_y)$ is on the boundary of $E^t_x$, that is, $w_y=t-y+x$, then we terminate the process. Note that if $y=x+t$, then $w_y=0$, which is on the boundary of $E^t_x$.
Otherwise, we reduce the range of selection from $E^t_x$ to the rectangle $E^{t-y+x}_y$ and we select $(z,w_z)$  which satisfies the  conditions above. Since the rectangles are getting smaller for each step, the process must terminate in finitely many steps. Thus, we get vertices $(x,h_x^t), (y,w_y), (z,w_z), \cdots$ in $\widehat{M}$. Let $\widehat{M}(x,t)$ be the set of all these vertices. Let $\omega_{\widehat{M}}(x,t)$ be the set consisting of the vertices $(y,h_x^t-y+x), (z,w_y-z+y), \cdots$. Then we can form a sequence 
$$ \omega_{\widehat{M}}(x,t)\lra \widehat{M}(x,t)\lra (x,t).$$
For a finite subset $V$ of vertices on $\overline{\mathbb{Z}A_{m-1}}$, we define 
$$\pi(V):=\bigoplus_{v\in V}\pi(v).$$
One can directly check that, applying $\pi$ to the above sequence, we get an exact sequence 
$$0\lra \pi \omega_{\widehat{M}}(x,t)\lra \pi\widehat{M}(x,t)\lraf{g} \pi(x,t)\lra 0$$
such that $g$ is a right minimal $\add(M)$-approximation. That is 
$\Omega_{M}(L_x^t)=\pi \omega_{\widehat{M}}(x,t)$.

\medskip 
The following situation occurs frequently in our later discussion. Assume that we have an rectangle
$$\begin{tikzpicture}[scale=0.5]
\draw [-,fill=gray!10] (5,0) node[dot]{} node[below] {$v_3$}  --(7,2) node[dot] {} node[right] {$v_0$}  --(4,5) node[dot]{} node[above] {$v_1$}  --(2,3) node[dot] {} node[left] {$v_2$}  -- cycle;
\end{tikzpicture}$$
on $\overline{\mathbb{Z}A_{m-1}}$ in which  all  possible vertices in $\widehat{M}$ occur only on the edges connecting $v_1$, $v_2$ and $v_3$. It is easy to see that the exact sequence
$$0\lra \pi(v_2)\lra \pi(v_1)\oplus \pi(v_3)\lra \pi(v_0)\lra 0 \quad (*)$$
is $\Hom_{\Lambda}(M,-)$-exact. By Lemma \ref{dim}, we have 
$$\Mdim{M}\pi(v_0)\leq \max\{\Mdim{M}{\pi(v_i)}\mid i=1,2,3\}+1.$$
The above sequence $(*)$ is called a {\em standard $M$-exact sequence}. 
In particular, if $\nu_1,\nu_2,\nu_3$ are all in $\widehat{M}$, then the sequence is exactly the $\add(M)$-resolution of $\pi(\nu_0)$ which we defined in subsection \ref{sect-gld}. 

\begin{Def}\label{mod}
Let $t$, $\delta$ be non-negative integers, we define two classes generator-cogenerators as follows.
$$S^{\delta}_t:=(\bigoplus\limits^{t}_{i=1}L^i_0)\oplus (\bigoplus\limits^{m-1}_{i=t+\delta}L^i_0)\oplus \Lambda, \quad N^{\delta}_t:=(\bigoplus\limits^{t}_{i=1}L^i_0)\oplus (\bigoplus\limits^{m-1-\delta-t}_{j=0}L^{\delta+t+j}_{-j})\oplus \Lambda$$
The non-projective parts of $S^{\delta}_t$ and $N^{\delta}_t$ can be displayed as the vertices of the lines in the following figures. 
\begin{center}
\begin{tikzpicture}[scale=0.5]
\draw [-,dashed] (-5,0)--(4,0) node[right] {${\scriptstyle 1}$};
\draw [-,dashed] (-5,6)--(4,6) node[right] {${\scriptstyle m-1}$};
\draw [-] (2,0)--(0,2) node[dot]{} node[right] {${\scriptstyle(0,t)}$};
\draw [-,dashed] (0,2)--(-2,4) node[dot]{} node[right] {${\scriptstyle(0,t+\delta)}$};
\draw [-] (-2,4)--(-4,6);
\node at (-4,3) {$S_t^{\delta}$};
\end{tikzpicture}
\hspace{2cm}
\begin{tikzpicture}[scale=0.5]
\draw [-,dashed] (-5,0)--(4,0) node[right] {${\scriptstyle 1}$};
\draw [-,dashed] (-5,6)--(4,6) node[right] {${\scriptstyle m-1}$};
\draw [-] (2,0)--(0,2) node[dot]{} node[right] {${\scriptstyle(0,t)}$};
\draw [-,dashed] (0,2)--(-2,4) node[dot]{} node[left] {${\scriptstyle(0,t+\delta)}$};
\draw [-] (-2,4)--(0,6);
\node at (-4,3) {$N_t^{\delta}$};
\end{tikzpicture}
\end{center}
\end{Def}

\begin{Prop}\label{gldimM}
Keep the notations in Definition \ref{mod}. Assume that $m\geq n$ and $t+\delta\leq m$. If $\delta\leq n$ when $m>n$ and $\delta=0 \mbox{ or } 1$ when $m=n$, then the global dimensions of $\End_{\Lambda}(S^{\delta}_t)$ and $\End_{\Lambda}(N^{\delta}_t)$ are both finite.
\end{Prop}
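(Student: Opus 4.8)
The plan is to prove both statements by the ``knitting process'' coming from Theorem~\ref{theo-gldimARseq}, reducing each of $S_t^\delta$ and $N_t^\delta$ to a generator-cogenerator whose $\add$ contains a complete slice, and then invoke Corollary~\ref{coro-slice}. First I would work on the covering $\overline{\mathbb{Z}A_{m-1}}$ and describe $\widehat{S_t^\delta}$ and $\widehat{N_t^\delta}$ explicitly: both consist of the full bottom layer (ordinate $0$), the full top layer (ordinate $m$, the projectives), the lower band of vertices $\{(0,i): 1\le i\le t\}$ together with their $\tau$-orbit translates, and an upper part which is the ``constant-first-coordinate'' band $\{(0,i): t+\delta\le i\le m-1\}$ for $S_t^\delta$, versus the ``diagonal'' band $\{(-j, \delta+t+j): 0\le j\le m-1-\delta-t\}$ for $N_t^\delta$. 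Note that $\omega(0,t)=(t,m-t)$, so the diagonal band of $N_t^\delta$ is, up to the automorphism $\omega$ (equivalently up to $\tau$ and $\Omega_\Lambda$), a horizontal band again; this is the structural reason the two cases run in parallel, and one could even hope to deduce the $N_t^\delta$ case from the $S_t^\delta$ case directly via Lemma~\ref{HS} after an appropriate syzygy shift. I would make that reduction precise first, so that it suffices to treat $S_t^\delta$.

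For $S_t^\delta$: the key point is that the part of $\add(M)$ between the lower band and the upper band is ``knittable''. Concretely, starting from the indecomposable summands on the bottom edge and on the two bands, I would repeatedly apply the knitting process of Theorem~\ref{theo-gldimARseq}: whenever an indecomposable summand $X$ of the current module has all its immediate successors (respectively predecessors) already summands, we may adjoin $\tau^- X$ (respectively $\tau X$) without changing finiteness of $\gldim$. The claim is that the hypotheses $\delta\le n$ (when $m>n$) or $\delta\in\{0,1\}$ (when $m=n$) are exactly what guarantee the knitting ``sweeps out'' the whole gap region: the width-$\delta$ strip between ordinate $t$ and ordinate $t+\delta$ is narrow enough, relative to the period $n$ of the group $G=\langle\tau^n\rangle$, that no vertex in the strip is isolated from the knitting front. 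After the process terminates, $\add$ of the resulting module contains a complete slice of $\Gamma_s(\Lambda)\cong\mathbb{Z}A_{m-1}/\langle\tau^n\rangle$ (for instance the slice through the top band, which has underlying graph $A_{m-1}$), so Corollary~\ref{coro-slice} gives $\gldim\End_\Lambda$ of that module finite, and Theorem~\ref{theo-gldimARseq} propagates finiteness back to $\End_\Lambda(S_t^\delta)$.

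The main obstacle I anticipate is the bookkeeping in the knitting step: one must check that at every stage there is an indecomposable summand with all immediate successors (or predecessors) already present, so that the process never stalls before covering the gap. This is where the numerical hypotheses on $\delta$ enter, and where the $m=n$ case genuinely differs --- when $m=n$ the quiver $\mathbb{Z}A_{m-1}/\langle\tau^m\rangle$ is ``thin'' and only $\delta=0,1$ leave enough room, so I would handle $m=n$ separately, possibly by a short direct argument (e.g. $\delta=0$ makes $S_t^0$ already contain a complete slice, and $\delta=1$ follows by one standard $M$-exact sequence as in the displayed $(*)$ computation, giving $\Mdim{M}$-bounds via Lemma~\ref{dim} and then Proposition~\ref{dim_prop}). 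An alternative to pure knitting, which may be cleaner to write up, is to bound $\Mdim{M}(L_x^t)$ directly for every indecomposable $L_x^t$ using the combinatorial $\Omega_M$-algorithm of subsection~\ref{compute-mdim}: show that the chain of rectangles $E_x^t \supset E_y^{t-y+x}\supset\cdots$ produced by the algorithm terminates after a bounded number of steps (bounded in terms of $m$, $n$, $\delta$), hence $\Mdim{M}(L_x^t)<\infty$ uniformly, and conclude by Lemma~\ref{AUS}. I would present the knitting argument as the main line and keep the rectangle-counting as the tool for the few boundary configurations the knitting front does not obviously reach.
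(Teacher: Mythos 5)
Your overall strategy (knitting plus Corollary \ref{coro-slice}) is indeed part of the paper's argument, but the central claim on which your proof rests --- that the hypotheses on $\delta$ guarantee the knitting process ``sweeps out'' the gap of width $\delta$ between the lower band and the upper band --- is false, and this is exactly where the real work lies. Knitting only adjoins $\tau^{-}X$ (resp.\ $\tau X$) when \emph{all} immediate successors (resp.\ predecessors) of $X$ are already summands. For $N_t^\delta$ with $\delta\geq 2$, the vertex $(0,t)$ at the top of the lower band has an immediate successor of the form $(\ast,t+1)$ lying inside the gap, hence not in $\add(N_t^\delta)$, and likewise $(0,\delta+t)$ at the bottom of the diagonal band has an immediate successor inside the gap. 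So the knitting front stalls immediately on both sides of the gap and never enters it, no matter what $\delta\leq n$ is. The paper crosses the gap by a different mechanism: it exhibits standard $M$-exact sequences such as
$$0\lra \pi(0,\delta+t)\lra\pi(0,t)\oplus\pi(z,\delta+t-z)\lra \pi(z,t-z)\lra 0,$$
which show $\Mdim{N_0}\pi(l_1)\leq 1$ for a diagonal line $l_1$ of gap vertices, and then invokes Proposition \ref{dim_prop} (not Theorem \ref{theo-gldimARseq}) to transfer finiteness back; only \emph{after} $\pi(l_1)$ has been adjoined does knitting resume and eventually produce a complete slice. Your ``alternative'' of bounding $\Mdim{M}$ directly is in fact the indispensable ingredient, not a backup for boundary configurations, and you have not carried it out.

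Two further points. First, your hoped-for reduction of $N_t^\delta$ to $S_t^\delta$ via $\Omega_\Lambda$ and Lemma \ref{HS} does not go through: applying $\omega$ to the vertex set of $N_t^\delta$ sends the horizontal band to a diagonal one and the diagonal band to a vertical one, but the two images are offset by $\tau^{\delta}$ relative to each other, so the result is not of the form $S_{t'}^{\delta'}$ or $N_{t'}^{\delta'}$. The paper instead relates $S_t^\delta$ and $N_t^\delta$ by knitting the triangle $\mu$ above $(0,\delta+t)$ bounded by the two upper bands (Figure \ref{fig-StNtequiv}), where the successor conditions genuinely hold. Second, your proposal entirely misses the hardest subcase, namely $-n<\delta+t-m\leq 0$ with $\delta+n>m+1$ (the line $l_3$ nonempty), where neither knitting nor a single approximation sequence suffices: the paper needs a four-term $\Hom_{\Lambda}(N_4,-)$-exact sequence and a downward induction on the vertices of $l_3$, using $\Mdim{N_4}\pi(-n,z)\leq\Mdim{N_4}\pi(0,s_1+z)+3$ from Lemma \ref{dim}. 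Without these ingredients the proof is not complete.
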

\begin{proof}
If $\delta=0$ or $1$, then clearly both $\add(S_t^{\delta})$ and $\add(N_t^{\delta})$ contain  complete slices on the Auslander-Reiten quiver, and consequently their endomorphism algebras have finite global dimensions. 

Now we assume that $m>n$. Actually,  $S_t^{\delta}$ and $N_t^{\delta}$ are related by the  knitting process described below Theorem \ref{theo-gldimARseq}. 
By the  knitting process, the endomorphism algebra of $S_t^{\delta}$ has finite global dimension if and only if so does the endomorphism algebra of $S_t^{\delta}\oplus\pi(\mu)$, where $\mu$ is the set of vertices in the triangle with boundaries in Figure \ref{fig-StNtequiv}. However, this is also equivalent to the finiteness of  $\gldim\End_{\Lambda}(N_t^{\delta})$ by the knitting process.
\begin{figure}[!ht]
    \centering 
    \begin{tikzpicture}[scale=0.5]
    \draw[-] (-4,4)--(4,4) \snode{right}{m};
    \draw[-] (-4,0)--(4,0) \snode{right}{0};
    \draw[-] (3,0) --++(-1.5,1.5) node[dot]{} \snode{right}{(0,t)};
    \draw[-,dashed] (0,3)--++(1.5,-1.5);
    \draw[-,fill=gray!30] (-1,4)--++(1,-1) node[dot]{}\snode{left}{(0,\delta+t)} --++(1,1)--cycle;
    \node at (0,3.5) {${\scriptstyle \mu}$};
    \end{tikzpicture}
    \caption{}
    \label{fig-StNtequiv}
\end{figure}

It remains to consider the global dimension of $\End_{\Lambda}(N^{\delta}_t)$ when  $m>n\geq\delta$. For simplicity, we write $N_0$ for $N^{\delta}_t$. Let $y=\delta+t-m$. We divide our proof into two cases: $y\leq -n$ and $-n<y\leq 0$. 

\medskip 
If $y\leq -n$, then let $l_1$ be the set of vertices on the line from $(0,t)$ to $(y,m-\delta)$. For each vertex $(z,t-z)\in l_1$ with $z\neq 0$, there is a standard $N_0$-exact sequence
$$0\lra \pi(0,\delta+t)\lra\pi(0,t)\oplus\pi(z,\delta+t-z)\lra \pi(z,t-z)\lra 0,$$
which is also an $\add(N_0)$-resolution of $\pi(z,t-z)$. Hence $\Mdim{N_0}{\pi(l_1)}\leq 1$. Let $N_1=N_0\oplus\pi(l_1)$. By the knitting process, we can add all vertices in the shadowed area $\mu$ in Figure \ref{fig-yless-n} to $N_1$, forming $N_2=N_1\oplus\pi(\mu)$. The condition $y<-n, \delta\leq n$ justifies the positions of the vertices $(y,m-n)$ and $(y,m-\delta)$. Clearly $\add(N_2)$ contains a complete slice, and consequently $\gldim\End_{\Lambda}(N_2)<\infty$. By the  knitting process, $\gldim \End_{\Lambda}(N_2)<\infty$ if and only if $\gldim\End_{\Lambda}(N_1)<\infty$. By Proposition \ref{dim_prop}, we deduce that $\gldim\End_{\Lambda}(N_0)<\infty$.  
\begin{figure}[!ht]
    \centering
    \begin{tikzpicture}[scale=0.25]
\draw[-] (-5,0)--(28,0);
\draw[-] (-5,15)--(28,15);
\draw[-, dashed] (10,15)--(15,10);
\draw[-] (10,15)\snode{above}{(y,m)} -- (2,7)node[dot]{}\snode{left}{(0,\delta+t)} ;
\draw[-,dashed] (2,7) --(5,4);
\draw[-,fill=gray!40,opacity=0.8] (9,0)node[dot]{}\snode{below}{(0,0)} -- (5,4)node[dot]{}\snode{left}{(0,t)} -- (13,12)\snode{midway,above}{l_1}node[dot]{}\snode{right}{(y,m-\delta)}--(15,10)node[dot]{}\snode{right}{(y,m-n)}--(25,0) --cycle;
\draw[-] (20,15) \snode{above}{(y-n,m)}-- (12,7);
\draw[-,dashed] (12,7) \snode{xshift=0,yshift=-15}{\mu}--(15,4);
\draw[-] (15,4) --(19,0) node[dot]{}\snode{below}{(-n,0)};
    \end{tikzpicture}
    \caption{}
    \label{fig-yless-n}
\end{figure}

Assume that $-n<y\leq 0$. Let $N_1=N_0\oplus\pi(w)$, where $w$ is the shadowed triangle in Figure \ref{fig-gldimStdelta}. By the knitting process, $\gldim\End_{\Lambda}(N_0)<\infty$ if and only if $\gldim\End_{\Lambda}(N_1)<\infty$. Consider the following sets of vertices
$$l_1:=\{(z,t-z)\mid y\leq z< 0\}, \quad l_2:=\{(-n,z)\mid n+y\leq z< \delta+t\},$$
$$l_3=\{(-n,z)\mid t<z<y+n\}.$$
\begin{figure}[!ht]
\centering
\begin{tikzpicture}[scale=0.25] 
\draw [-] (-16,0)--(22,0);
\draw [-] (-16,15)--(22,15) ;
\draw [-,fill=gray!30] (-2,0)node[dot]{}\snode{below}{(0,0)}\snode{xshift=15,yshift=15 }{\mu}--(-6,4)node[dot]{}\snode{right}{(0,t)} --(-2,8)node[dot]{}\snode{right}{(y,m-\delta)}\snode{midway,right}{l_1}--(6,0)node[dot]{}\snode{below}{(y,0)}--cycle; 
\draw[-] (-9,15)\snode{above}{(y,m)} -- (-13,11)node[dot]{}\snode{left}{(0,\delta+t)}--(-9,7)\snode{midway,left}{\tau^nl_2};
\draw[-,dashed] (-9,7)node[dot]{}--(-7,5);
\draw[-,fill=gray!30,opacity=0.8] (-9,15)--(-13,11)\snode{xshift=0,yshift=15}{w}--(-17,15)--cycle;
\draw[-,dashed] (-9,15)--(-2,8);
\draw [-] (20,0)\snode{below}{(-n,0)}--(16,4)node[dot]{}\snode{right}{(-n,t)};
\draw[-,dashed] (6,0) --(13,7)--(16,4)node[dot]{}\snode{right}{(-n,t)}\snode{midway,below}{l_3};
\draw[-] (13,7)node[dot]{}\snode{right}{(-n,y+n)}--(9,11)\snode{midway,below}{l_2}--(13,15)\snode{above}{(y-n,m)};
\draw[-,fill=gray!30,opacity=0.8] (5,15)--(9,11)\snode{xshift=0,yshift=15}{\tau^{-n}w}--(13,15)--cycle;
\draw[-,dashed] (9,11)--(2,4)node[dot]{}\snode{right}{(y,m-n)};

\draw[-,dashed](-6,4)--(-7,5);
\end{tikzpicture}
\caption{}\label{fig-gldimStdelta}
\end{figure}
Note that when $\delta+t=m$, $l_1$ is empty since $y=0$ in this case. Similarly as the case $y\leq -n$, for each vertex $(z,t-z)$ on $l_1$, the standard  $N_1$-exact sequence 
$$0\lra \pi(0,\delta+t)\lra \pi(0,t)\oplus\pi(z,\delta+t-z)\lra \pi(z,t-z)\lra 0$$
is also a $\add(N_1)$-resolution of $\pi(z,t-z)$. Hence $\Mdim{N_1}\pi(l_1)\leq 1$. Let $N_2=N_1\oplus \pi(l_1)$. The knitting process then allows us to add all indecomposable modules corresponding to vertices in the shadowed area $\mu$ to $N_2$ to form $N_3$. The inequality $\delta\leq n$ justifies the positions of the vertices $(y,m-\delta)$ and $(y,m-n)$.  For any vertex $(-n,z)\in l_2$, if $\pi(-n,z)\notin\add(N_3)$, then the standard $N_3$-exact sequence
$$0\lra \pi(y,m-n)\lra \pi(-n,\delta+t)\oplus \pi(y,z-y-n)\lra \pi(-n,z)\lra 0$$
is an $\add(N_3)$-resolution of $\pi(-n,z)$. Thus $\Mdim{N_3}\pi(l_2)\leq 1$. Let $N_4:=N_3\oplus\pi(l_2)$.

If $l_3$ is empty, then $\add(N_4)$ contains a complete slice  and consequently $\gldim \End_{\Lambda}(N_4)<\infty$. Now assume that $l_3$ is not empty. This happens only if $m>n$ and $y+n>t+1$, equivalently, $\delta+n>m+1$. By our assumption that $\delta\leq n$, we have $m=n+s_1$ with $0<s_1<n$ and $\delta>s_1+1$. Consider the vertex $(-n,z)$ on $l_3$ and assume that  $\Mdim{N_4}\pi(-n,z')<\infty$ for any $z<z'\leq m$. Note that $y+n\leq \delta+t$, one can deduce the following long exact sequence which remains exact after applying $\Hom_{\Lambda}(N_4,-)$,
 $$0\lra\pi(0,\delta+t)  \lra \begin{array}{c}
      \pi(y,m)  \\
      \vspace{-0.2em} \oplus \vspace{-0.2em}\\
      \pi(0,s_1+z)
 \end{array}\lra
       \pi(-n+z,m) 
 \lra  \pi(-n,y+n)\lra \pi(-n,z)\lra 0. $$
 Note that all terms of the sequence except $\pi(0,s_1+z)$ and $\pi(-n,z)$ are in $\add(N_4)$. By our assumption, $\Mdim{N_4}\pi(0,s_1+z)=\Mdim{N_4}\pi(-n,s_1+z)$ is finite. It follows that  
 $$\Mdim{N_4}\pi(-n,z)\leq \Mdim{N_4}\pi(0,s_1+z)+3<\infty$$ 
 by Lemma \ref{dim}. Now let $N_5=N_4\oplus \pi(l_3)$. Then $\add(N_5)$ contains a complete slice and thus $\gldim\End_{\Lambda}(N_5)$ is finite. By Proposition \ref{dim_prop}, we get $\gldim\End_{\Lambda}(N_4)<\infty$. 
 
Repeatedly  applying Proposition \ref{dim_prop} and the knitting process shows that the global dimensions of the algebras $\End_{\Lambda}(N_i), 0\leq i\leq 3$ are all finite. This finishes the proof. 
\end{proof}

\subsection{Rigidity degrees of generator-cogenerators}
In this subsection, we determine the rigidity degree of the generator-cogenerator $N_t^{\delta}$ in Definition \ref{mod} in certain cases.  The rigidity degrees of indecomposable modules are given in Theorem \ref{theo-rigdeg-selfinj-nak}. 

\begin{Prop}
\label{prop-rdNtdelta}
Suppose that $m>n$ and $\delta=\max\{n,(k_0-1)n\}$. Let $N_t^{\delta}$ be the generator-cogenerator of $A_{n,m}$ defined in Definition \ref{mod} with $0\leq m-\delta-t\leq t\leq m/2$. Then $\rd(N_t^{\delta})=\rd(t)$. 
\end{Prop}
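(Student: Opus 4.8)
The plan is to compute $\rd(N_t^{\delta})$ as $\min\{\rd(U,V)\}$ over ordered pairs $U,V$ of indecomposable direct summands of $N_t^{\delta}$, noting that since $\Lambda=A_{n,m}$ is self-injective one has $\rd(X,\Lambda)=\rd(\Lambda,X)=\infty$ for every $X$, so only the non-projective summands matter. These fall into two families: the \emph{column} $\mathscr{C}=\{L_0^i\mid 1\leq i\leq t\}$ and the \emph{diagonal} $\mathscr{D}=\{L_{-j}^{\delta+t+j}\mid 0\leq j\leq m-1-\delta-t\}$. The key structural point is that $\delta=\max\{n,(k_0-1)n\}$ is a multiple of $n$, so $\delta+t\equiv t\pmod n$; hence $\Omega_{\Lambda}L_{-j}^{\delta+t+j}=L_{[\delta+t]_n}^{m-\delta-t-j}=L_t^{m'-j}$ where $m':=m-\delta-t$, i.e.\ $\Omega_{\Lambda}(\mathscr{D})$ is again a column $\{L_t^{h'}\mid 1\leq h'\leq m'\}$, and the hypothesis $0\leq m'\leq t\leq m/2$ says it is a \emph{short} column. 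Since $\rd(X,Y)=\rd(\Omega_{\Lambda}X,\Omega_{\Lambda}Y)=\rd(\tau X,\tau Y)$ over a self-injective algebra, all the extension groups to be controlled are thereby reduced to ones between the two columns $\{L_0^i\}_{1\leq i\leq t}$ and $\{L_t^{h'}\}_{1\leq h'\leq m'}$, of Loewy lengths at most $t$ and $m'$.

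For the upper bound, $L_0^t$ is a summand of $N_t^{\delta}$, so $\rd(N_t^{\delta})\leq\rd(L_0^t)=\rd(0,t)=\rd(t)$ by Theorem~\ref{theo-rigdeg-selfinj-nak}. For the lower bound I would verify $\rd(U,V)\geq\rd(t)$ for every ordered pair of non-projective summands, in four cases: (i) self-extensions of a single summand; (ii) pairs inside $\mathscr{C}$; (iii) pairs inside $\mathscr{D}$ (reduced by $\Omega_{\Lambda}$ and $\tau$ to pairs inside a column of Loewy length $m'\leq t$); (iv) mixed pairs. Case (i) is immediate: the Loewy lengths that occur are $\leq t\leq m/2$, so Theorem~\ref{theo-rigdeg-selfinj-nak} together with the monotonicity $\rd(s)\leq\rd(s-1)$ of Lemma~\ref{lem-inSE}(3) gives $\rd(U)\geq\rd(t)$. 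For (ii)--(iv) the main tools are: the syzygy formulas $\Omega_{\Lambda}^{2i}L_x^h=L_{[x+im]_n}^h$ and $\Omega_{\Lambda}^{2i+1}L_x^h=L_{[x+im+h]_n}^{m-h}$; the description of homomorphisms between uniserials, $\Hom_{\Lambda}(L_a^b,L_c^d)\neq0$ iff $b'\equiv c+d-a\pmod n$ for some $1\leq b'\leq\min(b,d)$; and the fact that a homomorphism between uniserials of Loewy length $\leq m/2$ factors through no projective (composing $L_a^b\to L_c^m\to L_c^d$ and counting lengths, each such composite vanishes because $b+b'\leq m$), so that $\stHom_{\Lambda}=\Hom_{\Lambda}$ in that range. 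Using the isomorphism $\Ext_{\Lambda}^i(X,Y)\cong D\stHom_{\Lambda}(Y,\tau\Omega_{\Lambda}^{i-1}X)$ in odd degrees keeps all Hom-spaces between Loewy length $\leq m/2$ modules, and translating ``$\Ext^k\neq0$'' into a congruence of precisely the shape occurring in Lemma~\ref{lem-inSE} allows comparison with the explicit description of $\SE_G(t)$ there. In particular for (ii) and (iii) one obtains the clean inclusions $\SE(L_0^h,L_0^{h'})\cup\SE(L_0^{h'},L_0^h)\subseteq\SE_G(h)\cup\SE_G(h')$, which lie in $[\rd(t)+1,\infty)$ by monotonicity.

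The main obstacle will be case (iv). Using $L_{-j}^{\delta+t+j}=\Omega_{\Lambda}^{-1}L_t^{m'-j}$, the mixed groups become $\Ext_{\Lambda}^{k}(L_0^h,L_{-j}^{\delta+t+j})\cong\Ext_{\Lambda}^{k+1}(L_0^h,L_t^{m'-j})$ and, for $k\geq2$, $\Ext_{\Lambda}^{k}(L_{-j}^{\delta+t+j},L_0^h)\cong\Ext_{\Lambda}^{k-1}(L_t^{m'-j},L_0^h)$, together with a single term $\Hom_{\Lambda}(L_t^{m'-j},L_0^h)$ in degree $k=1$; these are extensions between two columns horizontally $t$ apart, and the degree shift by $+1$ is exactly what prevents a degree-one mixed extension. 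One must show all of these vanish through degree about $\rd(t)$, and here the precise value $\delta=\max\{n,(k_0-1)n\}$ is decisive: it is the smallest multiple of $n$ compatible with $m-\delta-t\leq t$, and it places the diagonal just far enough that the congruences governing the mixed extensions are controlled by the same Euclidean-algorithm data --- the remainders $s_l$ and the weighted Fibonacci numbers $\Fb_l$ of Theorem~\ref{theo-rigdeg-selfinj-nak} --- that governs $\rd(t)$. I expect the verification to split according to $k_0=1$ versus $k_0\geq2$ (which changes $m-\delta$ from $s_1$ to $n+s_1$, and in the latter range forces a careful analysis when $t\geq n$) and according to the parity of the extension degree, mirroring the two clauses of Lemma~\ref{lem-inSE}, with the weighted Fibonacci recursion absorbing the passage between consecutive remainders. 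Combining the four cases with the upper bound yields $\rd(N_t^{\delta})=\rd(t)$.
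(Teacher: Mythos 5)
Your setup is sound: the decomposition of the non-projective part of $N_t^{\delta}$ into the column $\{L_0^i\}_{1\leq i\leq t}$ and the diagonal $\{L_{-j}^{\delta+t+j}\}_{0\leq j\leq m-1-\delta-t}$ is exactly the paper's decomposition into $\mathscr{L}$ and $\mathscr{T}$, the upper bound $\rd(N_t^{\delta})\leq\rd(L_0^t)=\rd(t)$ is correct, and your observation that $\delta\equiv 0\pmod n$ makes $\Omega_{\Lambda}$ carry the diagonal to the short column $\{L_t^{h'}\}_{1\leq h'\leq m'}$ with $m'=m-\delta-t\leq t$ is a legitimate alternative to the paper's direct computation with $H^-(\mathscr{T})$. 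The tools you list (syzygy formulas, Hom between uniserials, the duality $\Ext^i_{\Lambda}(X,Y)\cong D\stHom_{\Lambda}(Y,\tau\Omega_{\Lambda}^{i-1}X)$) are in principle sufficient.

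The gap is that case (iv) — the mixed column–diagonal extensions, which is the entire content of the proposition — is not actually carried out: ``I expect the verification to split according to $k_0=1$ versus $k_0\geq 2$ \dots with the weighted Fibonacci recursion absorbing the passage between consecutive remainders'' is a statement of intent, not a proof, and it points in the wrong direction. No case split on $k_0$ and no Fibonacci recursion is needed. The paper computes, for the \emph{whole sets} $\mathscr{L}$ and $\mathscr{T}$ at once (so that each $H^-$ is a single region rather than a union handled pairwise), that $2i\in\SE_G(\mathscr{T},\mathscr{L})$ iff $\rem{im}_n<m-\delta-t$ while $2i+1\in\SE_G(\mathscr{T},\mathscr{L})$ iff $\rem{im}_n\geq n-t$, and likewise that the odd-degree condition for $\SE_G(\mathscr{L},\mathscr{T})$ and $\SE_G(\mathscr{T},\mathscr{T})$ is $\rem{im}_n\geq n-(m-\delta-t)$ and the even-degree condition is governed by the thresholds $t$ and $m-\delta-t$ respectively. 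The single hypothesis $m-\delta-t\leq t$ then forces every one of these sets into $\SE_G(\mathscr{L},\mathscr{L})=\SE_G(t)$ by comparison with the two clauses of Lemma \ref{lem-inSE}; the Euclidean/Fibonacci data of Theorem \ref{theo-rigdeg-selfinj-nak} is needed only to evaluate $\rd(t)$ afterwards, never for the comparison. Until you derive these congruence thresholds explicitly (in your formulation: the precise conditions for $\Ext^{k+1}_{\Lambda}(L_0^h,L_t^{m'-j})\neq 0$ and $\Ext^{k-1}_{\Lambda}(L_t^{m'-j},L_0^h)\neq 0$) and observe that they are absorbed by $m'\leq t$, the lower bound $\rd(N_t^{\delta})\geq\rd(t)$ remains unproved.
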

\begin{proof}
For simplicity, set 
$$\mathscr{L}:=\{(0,z)\mid 1\leq z\leq t\},\quad \mathscr{T}:=\{(z,\delta+t-z)\mid \delta+t-m+1\leq z\leq 0\}.$$
Let $L:=\pi(\mathscr{L})$ and $T:=\pi(\mathscr{T})$. Then $N_t^{\delta}=A_{n,m}\oplus L\oplus T$. The set $H^-(\mathscr{L})$ consists of vertices in the shadowed area in the following figure. 
\begin{center}
    \begin{tikzpicture}
    \draw[-] (-1,1)--(4,1);
    \draw[-] (-1,4)--(4,4);
    \draw[-,fill=gray!30,opacity=0.8] (3.5,1)--(2.8,1.7)node[dot]{}\snode{right}{(0,t)} -- (0.5,4)\snode{above}{(0,m-1)} --(-0.2,3.3)--(2.1,1)\snode{below}{(t-1,1)}\snode{midway,xshift=20}{H^-(\mathscr{L})}--cycle;
    \end{tikzpicture}
\end{center}
Then it is staightforward to check the following.
\begin{itemize}
    \item $2i\in\SE_G(\mathscr{L},\mathscr{L})$ if and only if there exists $r\in\mathbb{Z}$ such that $0\leq im+rn<t$, that is, $\rem{im}_n<t$;
    \item $2i+1\in\SE_G(\mathscr{L},\mathscr{L})$ if and only if $\rem{im}_n\geq n-t$;
    \item $2i\in\SE_G(\mathscr{T},\mathscr{L})$ if and only if   $\rem{im}_n<m-\delta-t$;
    \item $2i+1\in\SE_G(\mathscr{T},\mathscr{L})$ if and only if  $\rem{im}_n\geq n-t$.
\end{itemize}
Since $m-\delta-t\leq t$, together with Lemma \ref{lem-inSE}, we deduce that
$$\SE_G(\mathscr{T},\mathscr{L})\subseteq\SE_G(\mathscr{L},\mathscr{L})=\SE_G(t).$$
It follows that $\rd(L\oplus T, L)=\rd(L)=\rd(t)$. Similarly, one can draw the picture of $H^-(\mathscr{T})$ and verify:
\begin{itemize}
     \item $2i\in\SE_G(\mathscr{L},\mathscr{T})$ if and only if  $\rem{im}_n<t$;
    \item $2i+1\in\SE_G(\mathscr{L},\mathscr{T})$ if and only if $\rem{im}_n\geq n-(m-\delta-t)$;
    \item $2i\in\SE_G(\mathscr{T},\mathscr{T})$ if and only if   $\rem{im}_n<m-\delta-t$;
    \item $2i+1\in\SE_G(\mathscr{T},\mathscr{T})$ if and only if  $\rem{im}_n\geq n-(m-\delta-t)$.
\end{itemize}
It follows that 
$$\SE_G(\mathscr{T},\mathscr{T})\subseteq \SE_G(\mathscr{L},\mathscr{T})\subseteq \SE_G(\mathscr{L},\mathscr{L}) .$$
This is equivalent to $\rd(T,T)\geq \rd(L,T)\geq \rd(L)$. Then we have  
 $\rd(L\oplus T, T)\geq \rd(L)=\rd(t)$. 
 Therefore, $\rd(L\oplus T)=\min\{\rd(L\oplus T, L), \rd(L\oplus T, T)\}=\rd(t)$. 
\end{proof}

\subsection {Rigidity dimension of $A_{n,m}$ with $m\geq n$}
If $m=n=1$, then $A_{1,1}$ is semi-simple and thus $\rigdim(A_{1,1})=\infty$.
If $n=1$ and $m>1$, then $\rd(t)=0$ for all $1\leq t\leq m-1$,
thus $\rigdim(A_{1,m})=2$.

In the rest of this subsection, we always assume that $m\geq n>1$.

\begin{Lem}\label{k01}
Assume that $k_0=1$, then
$$\rigdim(A_{n,m})\begin{cases}
=3, & d=-1;\\
=2k_1, & d=0 \mbox{ and } s_1=1;\\
=2k_1+1, &  d=0 \mbox{ and } s_1>1;\\
\geq 2k_1+2, &  d>0.\\
\end{cases}$$

\end{Lem}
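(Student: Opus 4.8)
The case $k_0 = 1$ means $m = n + s_1$ with $0 \leq s_1 < n$, and the subcases correspond to how far the Euclidean algorithm proceeds: $d = -1$ means $s_1 = 0$, i.e.\ $m = n$; $d = 0$ means $s_1 > 0$ but $s_1 \mid n$; $d > 0$ means $s_1 \nmid n$. The strategy is the one outlined after Theorem~\ref{rigdim_all0}: for the lower bounds, exhibit an explicit generator-cogenerator with the right rigidity degree whose endomorphism algebra has finite global dimension; for the upper bounds (where equality is claimed), invoke Proposition~\ref{cru-prop} to rule out larger rigidity degrees. I would organize the proof around the generator-cogenerators $N_t^\delta$ of Definition~\ref{mod}, using Proposition~\ref{gldimM} to control global dimension and Proposition~\ref{prop-rdNtdelta} to compute the rigidity degree.

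\textbf{Step 1: the case $d = -1$ (i.e.\ $m = n$).} Here $A_{n,n}$ has Loewy length equal to the number of simples. By Theorem~\ref{theo-rigdeg-selfinj-nak} (or Lemma~\ref{lem-inSE}), the maximal rigidity degree of an indecomposable is achieved at $t = 1$: one checks $2 \in \SE_G(1)$ fails but $1 \in \SE_G(1)$ fails too while $2i$ or $2i+1$ lands in $\SE_G(1)$ only for suitable $i$ — a direct computation with $[im]_n$ gives $\rd(1) = 1$. I would then take $M = S_1^0 = \Lambda \oplus \bigoplus_{i=1}^{n-1} L_0^i$ (the full "slice" plus projectives); since $\delta = 0$, Proposition~\ref{gldimM} gives $\gldim\End_\Lambda(M) < \infty$, and $\rd(M) = 1$, so $\rigdim(A_{n,n}) \geq 3$. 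The matching upper bound $\rigdim \leq 3$ is the content of Proposition~\ref{cru-prop} applied in this case.

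\textbf{Step 2: the cases $d = 0$.} Now $s_1 > 0$ and $n = k_1 s_1$, so the weighted Fibonacci sequence is $\Fb_{-1} = 0, \Fb_0 = 1, \Fb_1 = k_1$. By Theorem~\ref{theo-rigdeg-selfinj-nak}, for $s_1 < t < s_0 = n$ with $l = 1$ odd and $l < d+1$ — wait, here $d+1 = 1$ so $l = d+1$; the relevant line is $\rd(x,t) = 2\Fb_1 - 1 = 2k_1 - 1$ for $0 < t < n$ not equal to $s_1$, but one must also handle $t = s_1$ and small $t$ carefully using Lemma~\ref{lem-inSE}. When $s_1 = 1$: the best rigidity degree among indecomposables turns out to be $2k_1 - 1$ achieved at $t = s_1 = 1$ via the third branch $2(\Fb_{d+1} - \Fb_d) = 2(k_1 - 1)$ when $d$ is even — I would recompute this directly. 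One then builds $N_t^\delta$ with $\delta = \max\{n, (k_0-1)n\} = n$ (since $k_0 = 1$) and $t = s_1$, apply Proposition~\ref{prop-rdNtdelta} to get $\rd(N_t^\delta) = \rd(t)$, and Proposition~\ref{gldimM} (with $\delta = n$, legitimate since $m > n$) for finiteness of global dimension, yielding the lower bound $\rigdim \geq \rd(t) + 2 = 2k_1$ (resp.\ $2k_1 + 1$ when $s_1 > 1$). The exact equality again comes from Proposition~\ref{cru-prop}.

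\textbf{Step 3: the case $d > 0$.} Here one only claims the \emph{lower} bound $\rigdim \geq 2k_1 + 2$. The point is that $2k_1 + 2 = 2(\Fb_1 + 1)$, so I need a generator-cogenerator with rigidity degree $2k_1 + 1$. Since $d > 0$, the indecomposable at $t = s_1$ has $\rd$ governed by the second branch of Theorem~\ref{theo-rigdeg-selfinj-nak} with $l = 1$ odd, $l < d+1$: $\rd(x, s_1) = 2\Fb_1 = 2k_1$. But I need $2k_1 + 1$, which is \emph{odd} and hence cannot be the rigidity degree of any indecomposable in the "odd $l$" range — so the trick must be to take a \emph{decomposable} generator-cogenerator $N_t^\delta$ whose rigidity degree exceeds that of each of its summands. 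Concretely, Proposition~\ref{prop-rdNtdelta} gives $\rd(N_t^\delta) = \rd(t)$ for $t$ in the allowed range $0 \leq m - \delta - t \leq t \leq m/2$ with $\delta = n$; choosing $t$ just below $s_1$ (so $s_2 < t < s_1$, $l = 0$ even, giving $\rd(t) = 2\Fb_0 - 1 = 1$) is too small, so instead I would look for a different $N_t^\delta$ or a variant where the combinatorics of $\SE_G$ forces $\rd(t) = 2k_1 + 1$. Actually the cleanest route: take $t = s_1$ itself but with a $\delta$ chosen so that $m - \delta - t < t$ strictly and the extra summands $\mathscr T$ kill the degree-$2k_1$ extension — rechecking the four bullet-point computations in the proof of Proposition~\ref{prop-rdNtdelta}, one sees $2k_1 \in \SE_G$ corresponds to $[k_1 \cdot m/2 \cdot \ldots]$... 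This is the subtle part.

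\textbf{Main obstacle.} The hard part is Step 3: producing a generator-cogenerator with rigidity degree exactly $2k_1 + 1$ when $d > 0$. Since $2k_1 + 1$ is odd, no single indecomposable works (the indecomposables with odd rigidity degree in the relevant range have degree $2\Fb_l - 1$ for \emph{even} $l$, which near $t = s_1$ forces the much smaller value $1$). So one genuinely needs the combinatorial interplay in $N_t^\delta$: the module $T = \pi(\mathscr T)$ must be chosen so that $\SE_G(\mathscr T, \mathscr L)$ and $\SE_G(\mathscr L, \mathscr T)$ remove the even integer $2k_1$ from $\SE_G(\mathscr L)$ but leave the odd integer $2k_1 + 1$ — a delicate alignment requiring the $d > 0$ hypothesis (so that $s_2 \neq 0$, which is exactly what makes $[k_1 m]_n \neq 0$ or the relevant remainder computation land strictly inside the window). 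I would handle this by carefully redoing the bullet-point analysis of Proposition~\ref{prop-rdNtdelta} with the specific $t$ and $\delta$, using Lemma~\ref{lem-inSE}(1)--(2) to pin down which $2i$ and $2i+1$ lie in each $\SE_G$, and the finiteness of global dimension will follow from Proposition~\ref{gldimM} once the parameters satisfy $\delta = n \leq n$ and $t + \delta \leq m$.
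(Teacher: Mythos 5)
Your overall strategy (exhibit $N_t^{\delta}$ or $S_t^{\delta}$, control global dimension via Proposition~\ref{gldimM}, compute the rigidity degree via Proposition~\ref{prop-rdNtdelta} and Theorem~\ref{theo-rigdeg-selfinj-nak}) is exactly the paper's, but Step~3 contains a genuine gap caused by an off-by-one in the definition of rigidity dimension. Since $\rigdim(\Lambda)=\sup\rd(M)+2$, the bound $\rigdim\geq 2k_1+2$ requires a generator-cogenerator with $\rd(M)\geq 2k_1$, \emph{not} $2k_1+1$. When $d>0$ the second branch of Theorem~\ref{theo-rigdeg-selfinj-nak} (with $l=1<d+1$ odd) gives $\rd(t)=2\Fb_1=2k_1$ for every $t$ with $s_2\leq t\leq s_1$, so the same module $N^{n}_{s_1-1}$ used in the $d=0$ case already does the job via Propositions~\ref{gldimM} and~\ref{prop-rdNtdelta}; the entire search for an odd rigidity degree $2k_1+1$, and in particular the idea of a decomposable module whose rigidity degree exceeds that of its summands, is a dead end ($\Ext^i_\Lambda(X,X)$ is a direct summand of $\Ext^i_\Lambda(M,M)$ for any summand $X$ of $M$, so $\rd(M)\leq\rd(X)$ always).

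Two smaller corrections. In Step~2 with $s_1>1$ you take $t=s_1$, but the third branch of Theorem~\ref{theo-rigdeg-selfinj-nak} gives $\rd(s_1)=2(\Fb_1-\Fb_0)=2k_1-2$ there, which only yields $\rigdim\geq 2k_1$; you need $t$ strictly below $s_1$ (the paper takes $t=s_1-1$, where $\rd(s_1-1)=2\Fb_1-1=2k_1-1$ since $l=1=d+1$). Finally, none of the three equality cases needs Proposition~\ref{cru-prop}: in each of them the rigidity degree realized is already the maximum of $\rd(t)$ over all indecomposables (this is $1$ when $m=n$, $2k_1-2$ when $s_1=1$, and $2k_1-1$ when $d=0$, $s_1>1$), so the upper bound is immediate from Theorem~\ref{theo-rigdeg-selfinj-nak}; moreover the hypothesis $0<s_{l+1}<m/2$ of Proposition~\ref{cru-prop} is never satisfiable when $k_0=1$ and $d\leq 0$, so your appeal to it would not even typecheck.
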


\begin{proof}
(1) Suppose that $d=-1$, that is, $m=n$. Then $\rd(t)=1$ for any $1\leq t\leq n-1$ by Theorem \ref{theo-rigdeg-selfinj-nak}, thus $\rigdim(A_{n,n})\leq 3$.  Let $t=n-1$, $\delta=0$ in Definition \ref{mod}, then by Proposition \ref{gldimM}, one gets that $\gldim \End_{\Lambda}(S^{0}_{n-1})<\infty$. Moreover, it is straightforward to  check that $\rd(S^{0}_{n-1})=1$. Thus $\rigdim(A_{n,n})=3$.

(2) Suppose now that $d=0$ and $s_1=1$. Equivalently, $m=n+1$. Let $M=\pi(0,1)\oplus  \Lambda$. It follows that $\rd(M)=\rd(1)=2n-2=2k_1-2$ by Theorem \ref{theo-rigdeg-selfinj-nak}. Since $n+1=m$, then $M=S^{n}_1$ and thus $\gldim \End_{\Lambda}(M)<\infty$ by Proposition \ref{gldimM}. Note that , by Theorem \ref{theo-rigdeg-selfinj-nak}, $\rd(M)=\rd(1)$ is the maximal possible rigidity degree a $\Lambda$-module can have. Hence $\rigdim(A_{n,n+1})=2k_1$.

(3) For the rest two cases, choose the generator-cogenerator to be $N^n_{s_1-1}$, then
 $\gldim \End_{\Lambda}(N^n_{s_1-1})<\infty$ by Proposition \ref{gldimM} again. Moreover, we have  $\rd(N^n_{s_1-1})=\rd(s_1-1)$ by Proposition \ref{prop-rdNtdelta}. Thus
$\rigdim(A_{n,m})\geq \rd(s_1-1)+2$.

According to Theorem \ref{theo-rigdeg-selfinj-nak} again, we have $\rd(s_1-1)=2k_1-1$ and $\rd(t)\leq \rd(s_1-1)$ for any $1\leq t\leq m-1$ when $d=0$ and $s_1>1$; $\rd(s_1-1)=2k_1$ when $d>0$. It follows that $\rigdim(A_{n,m})=2k_1+1$ when $d=0$ and $s_1>1$, and  $\rigdim(A_{n,m})\geq 2k_1+2$ when $d>0$. This finishes the proof.
\end{proof}

\begin{Lem}\label{k02}
Assume that $k_0=2$ and $s_1<n-1$, then $\rigdim(A_{n,m})\geq 3$.
\end{Lem}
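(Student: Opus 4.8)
The goal is to exhibit a generator‑cogenerator $M$ of $\Lambda=A_{n,m}$ (here $m=2n+s_1$ with $s_1<n-1$) such that $\gldim\End_\Lambda(M)<\infty$ and $\rd(M)\geq 1$; by the reformulation of $\rigdim$ via M\"uller's theorem this gives $\rigdim(\Lambda)\geq \rd(M)+2\geq 3$. The natural candidate is one of the explicit modules from Definition \ref{mod}. Since $k_0=2$, we have $m>n$, so Proposition \ref{gldimM} applies to $S^\delta_t$ and $N^\delta_t$ as long as $t+\delta\leq m$ and $\delta\leq n$. The task is therefore to pick $t$ and $\delta$ in the admissible range so that the resulting module has rigidity degree at least $1$, i.e. $\Ext^1_\Lambda(M,M)=0$.

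**First steps.** First I would recall from Theorem \ref{theo-rigdeg-selfinj-nak} (or directly from Lemma \ref{lem-inSE}) what the rigidity degree of an indecomposable $L^t_0$ is in the regime $k_0=2$. By Lemma \ref{lem-inSE}, $1\in\SE_G(t)$ never happens ($2i+1=1$ forces $i=0$ and $\rem{0}_n=0\geq n-t$ only if $t\geq n$, impossible for $t\leq m/2$ since... actually one must be careful: $t\le m/2=(2n+s_1)/2>n$, so $t=n$ is possible), and $2\in\SE_G(t)$ iff $\rem{m}_n=s_1<t$. So for $t\leq s_1$ the indecomposable $L^t_0$ already has $\rd\geq 2$; but such $t$ is small and one must still control the rigidity degree of the \emph{whole} module $S^\delta_t$ or $N^\delta_t$, not just of $L^t_0$. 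I would use Proposition \ref{prop-rdNtdelta}: with $\delta=\max\{n,(k_0-1)n\}=n$, one gets $\rd(N^n_t)=\rd(t)$ provided $0\leq m-n-t\leq t\leq m/2$, i.e. $n+s_1\leq 2t$ and $t\leq n+s_1/2$. Choosing $t$ in this window (for instance $t$ near $(n+s_1)/2$, which satisfies $t<n$ since $s_1<n$, hence $\rd(t)\geq 1$ by Theorem \ref{theo-rigdeg-selfinj-nak} as long as $t<s_0=n$), and checking $t+\delta=t+n\leq m=2n+s_1$ (automatic) and $\delta=n\leq n$ (automatic), Proposition \ref{gldimM} yields $\gldim\End_\Lambda(N^n_t)<\infty$ and Proposition \ref{prop-rdNtdelta} yields $\rd(N^n_t)=\rd(t)\geq 1$.

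**The conclusion.** Then $\rigdim(A_{n,m})\geq \rd(N^n_t)+2=\rd(t)+2\geq 3$, which is exactly the claim. Concretely I would fix one explicit value of $t$ (say the smallest $t$ with $2t\geq n+s_1$) and verify the three numerical side conditions ($m-n-t\leq t$, $t\leq m/2$, $t<n$) in one line each, using $s_1<n-1$; the inequality $t<n$ is where $s_1<n-1$ is genuinely needed, since it guarantees $\lceil (n+s_1)/2\rceil<n$ and hence that the chosen $t$ gives a non‑zero rigidity degree rather than $\rd(t)=0$.

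**Expected main obstacle.** The only delicate point is the bookkeeping of the inequalities: making sure that a single choice of $t$ simultaneously lies in the window $[\,(n+s_1)/2,\ m/2\,]$ required by Proposition \ref{prop-rdNtdelta}, stays strictly below $n$ so that Theorem \ref{theo-rigdeg-selfinj-nak} gives $\rd(t)\geq 1$, and respects $t+\delta\le m$ for Proposition \ref{gldimM}. None of this is deep, but one must be attentive to the parity/floor issues in $\lceil(n+s_1)/2\rceil$ and to the strict vs.\ non‑strict inequality $s_1<n-1$ (as opposed to $s_1\le n-1$, which is the boundary case $\rigdim=2$ handled separately). Everything else is an immediate appeal to Propositions \ref{gldimM} and \ref{prop-rdNtdelta}.
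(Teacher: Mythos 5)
Your proposal is correct and takes essentially the same route as the paper: the paper applies Proposition \ref{gldimM} and Proposition \ref{prop-rdNtdelta} to $N^{n}_{n-1}$, i.e.\ it chooses $t=n-1$ (the top of your admissible window, where $m-\delta-t=s_1+1\leq t$ is exactly the hypothesis $s_1<n-1$), and concludes $\rd(N^n_{n-1})=\rd(n-1)=1$, hence $\rigdim(A_{n,m})\geq 3$. Your choice of the smallest admissible $t$ works just as well and uses the hypothesis $s_1<n-1$ at the analogous point.
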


\begin{proof}
By Proposition \ref{gldimM} and Proposition  \ref{prop-rdNtdelta}, one has 
 $\gldim \End_{\Lambda}(N^{n}_{n-1})<\infty$ and $\rd(N^{n}_{n-1})=\rd(n-1)=1$. 
Thus $\rigdim(A_{n,m})\geq 3$.
\end{proof}

The rest of this section is devoted to dealing with the unsolved cases in Lemma \ref{k01} and \ref{k02}. We always assume that  $\delta=:\max\{n,(k_0-1)n\}$.
For any $x\in \Z$, we define $$t_x:=\min\{t\mid (x,\delta+t)\in \widehat{M}\}.$$ 
$t_x$ is well-defined since we always have $(x,m)\in \widehat{M}$.
We say that $M$ is {\bf $\delta$-free} if $(x,t_x)\notin\widehat{M}$ for all $x\in\mathbb{Z}$.

\medskip 
The following proposition plays an important role in our later proofs.

\begin{Prop}\label{cru-prop}
Keep the notations above. Assume that $l<d+1$ is odd and $0<s_{l+1}<m/2$. Let $M=N\oplus \Lambda$ be a generator-cogenerator  with $\rd(M)\geq 2\Fb_l+1$.   If both $N\oplus \Lambda$ and $\Omega_{\Lambda}(N)\oplus \Lambda$ are $\delta$-free, then $\gldim \End_{\Lambda}(M)=\infty$.
\end{Prop}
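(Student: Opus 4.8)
The plan is to show that under the hypotheses some indecomposable $\Lambda$-module is $M$-periodic, so that $\Mdim{M}$ of that module is infinite and hence $\gldim\End_\Lambda(M)=\infty$ by Lemma \ref{AUS}. The natural candidate is an indecomposable module $X$ whose rigidity degree achieves the ``odd, $l<d+1$'' value $2\Fb_l$, i.e. some $\pi(x,t)$ with $s_{l+1}\leq t\leq s_l$; the rigidity degree of $M$ being at least $2\Fb_l+1$ means that such an $X$ is \emph{not} detected by $M$ up to the relevant degree, and this is precisely the slack we want to exploit. By Lemma \ref{HS} we may freely replace $N$ by $\Omega_\Lambda(N)$ without changing $\gldim\End_\Lambda(M)$, which is why the hypothesis asks that both $N\oplus\Lambda$ and $\Omega_\Lambda(N)\oplus\Lambda$ be $\delta$-free — the argument will want to apply the syzygy shift and stay inside the ``$\delta$-free'' situation.

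First I would unwind the combinatorial meaning of $\delta$-freeness together with $\rd(M)\geq 2\Fb_l+1$: for each $x$, the smallest $h>\delta$ with $(x,h)\in\widehat M$ is strictly larger than $\delta+t_x$ in a controlled way, and the approximation recipe of Subsection \ref{compute-mdim} then computes $\Omega_M(\pi(x,t))$ by the rectangle-shrinking process. The goal is to track a carefully chosen vertex $(x_0,t_0)$ with $s_{l+1}\leq t_0\leq s_l$ and show that iterating $\Omega_M$ returns (a vertex in the $\langle\tau,\omega\rangle$-orbit of) $(x_0,t_0)$ after finitely many steps, i.e. $\pi(x_0,t_0)\in\add\Omega_M^r(\pi(x_0,t_0))$ for some $r>0$. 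Here the weighted Fibonacci recursion $\Fb_l=k_l\Fb_{l-1}+\Fb_{l-2}$ and the Euclidean data $m=k_0n+s_1$, $n=k_1s_1+s_2,\dots$ should govern exactly how the coordinates move: each shrinking step corresponds to subtracting a multiple of a remainder $s_i$, mirroring the Euclidean algorithm, and the periodicity of $\SE_G(t)$ from Lemma \ref{lem-inSE} (parts (1),(2)) shows the process closes up rather than terminating in $\add(M)$. Using $\delta$-freeness, I would argue that at no intermediate step does the rectangle process hit a vertex already in $\widehat M$ prematurely — that is what prevents $\Omega_M^i(\pi(x_0,t_0))$ from ever landing in $\add(M)$.

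The second ingredient is to handle the parity/shift: one pass of the rectangle process from a vertex at ``height $t$'' typically lands at a vertex at height $m-\delta-t$ or similar, so a single $\Omega_M$ need not return to the same orbit; instead one alternates, possibly invoking Lemma \ref{HS} to replace $N$ by $\Omega_\Lambda N$ midway, using that both stay $\delta$-free. After an appropriate even number of steps the coordinate $t$ should come back to the range $[s_{l+1},s_l]$, in the same $\tau^n$-orbit, giving the periodicity. I would phrase the bookkeeping in terms of $\SE_G$ and the maps $\omega^{\pm1}$ on $\overline{\Z A_{m-1}}$ so that the condition $\rd(M)\geq 2\Fb_l+1$ translates into the statement ``the approximation at step $j$ cannot use any summand of $M$ of height $\leq$ (something), for $j<\Fb_l$,'' forcing the orbit to be traversed exactly as the Euclidean recursion dictates.

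The main obstacle I anticipate is precisely this last point: verifying that the rectangle-shrinking / approximation process, starting from a vertex of height in $[s_{l+1},s_l]$ and under the two $\delta$-freeness hypotheses, genuinely reproduces the Euclidean descent and closes into a cycle, rather than being short-circuited by some unexpected summand of $M$ sitting inside one of the rectangles $E^t_x$. Controlling ``which vertices of $\widehat M$ can possibly lie in the relevant rectangles'' is where the hypothesis $\rd(M)\geq 2\Fb_l+1$ must be used in an essential and somewhat delicate way, and getting the indexing of the $s_i$'s, $k_i$'s and $\Fb_i$'s to line up with the geometry of the rectangles is the technical heart of the argument. I expect this is exactly why the authors defer the proof of Proposition \ref{cru-prop} to an appendix.
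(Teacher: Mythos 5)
Your overall strategy is the paper's: use Lemma \ref{HS} and the two $\delta$-freeness hypotheses to normalize once at the outset, then exhibit an indecomposable module outside $\add(M)$ whose $\Omega_M$-orbit never reaches $\add(M)$, with the hypothesis $\rd(M)\geq 2\Fb_l+1$ serving to clear the relevant rectangles of vertices of $\widehat{M}$. But the proposal has a genuine gap: the verification that the approximation process never terminates (or closes into a cycle) — which you yourself flag as the main obstacle — is the entire content of the proof, and the structural reductions that make it tractable are missing from your plan.

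Concretely, two points would have to change. First, the candidate module cannot be chosen a priori as some $\pi(x,t)$ with $s_{l+1}\leq t\leq s_l$; it must be read off from $\widehat{M}$. The paper sets $t_x:=\min\{t\mid (x,\delta+t)\in\widehat{M}\}$ and splits according to whether $t_{\min}=\min_x t_x$ is $\geq s_{l+1}$ or $<s_{l+1}$. In the first case one tracks modules $\pi(x,\tilde t_x)$ (with $\tilde t_x$ given by a secondary minimality condition) and shows that $\Omega_M^2\pi(x,\tilde t_x)$ always has a direct summand of the same form; since no $(z,\tilde t_z)$ lies in $\widehat{M}$, the $M$-dimension is infinite and no literal periodicity is needed. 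In the second case the inequality $t_{\min}<s_{l+1}$ \emph{forces} $l\in\{-1,1\}$ and yields the identity $\Fb_l(m-\delta)+s_{l+1}=n$, which is precisely what makes the bookkeeping over $\Fb_l+1$ double-steps close up; without this reduction the ``Euclidean descent'' you describe cannot even be set up, since for general $l$ the tracked heights do not return to the same $\tau^n$-orbit. Second, you cannot replace $N$ by $\Omega_{\Lambda}(N)$ ``midway'': $\Omega_M$ is computed relative to a fixed $M$, so Lemma \ref{HS} only lets you choose between $N\oplus\Lambda$ and $\Omega_{\Lambda}(N)\oplus\Lambda$ once, before the syzygy computation starts (the paper uses this to assume $\widehat{M}$ meets the upper band, i.e.\ contains a vertex $(a,m-s)$). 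The parity issue you raise is instead handled by always taking two $\Omega_M$-steps at a time. So while the roadmap is sound, the proof itself — the case division, the coordinate estimates showing each rectangle meets $\widehat{M}$ only where claimed, and the closing-up argument — remains to be supplied.
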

The detailed proof is provided in the Appendix.



\begin{Theo}\label{K012eq}
$(1)$ Assume that $k_0=1$ and $d>0$, then
$$\rigdim(A_{n,m})=\begin{cases}
2k_1+3, & k_2=1 \mbox{ and }s_3< s_2-1;\\
2k_1+2, &  \mbox{ otherwise}.\\
\end{cases}$$

$(2)$ Assume that $k_0=2$, then
$$\rigdim(A_{n,m})=\begin{cases}
2, & s_1=n-1;\\
3, &  s_1<n-1.\\
\end{cases}$$
\end{Theo}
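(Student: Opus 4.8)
The plan is to combine the lower bounds already obtained in Lemmas \ref{k01} and \ref{k02} with matching upper bounds, using Theorem \ref{theo-rigdeg-selfinj-nak} to pin down which rigidity degrees are attainable at all, and Proposition \ref{cru-prop} to rule out generator-cogenerators whose rigidity degree exceeds the claimed value yet whose endomorphism algebra has finite global dimension. For part $(2)$, the case $s_1=n-1$ gives $k_0=2$, $d=-1$, and Theorem \ref{theo-rigdeg-selfinj-nak} forces $\rd(t)=0$ for every indecomposable (since $t\le m/2=(2n-1)/2<n$ still puts $t$ in the range with $l=d+1=0$, $\Fb_0=1$, giving $2\Fb_0-1=1$... here one must check the boundary carefully); more to the point, $m=2n-1$ so $\rem{m}_n=n-1$ and the relevant $\SE_G$ sets start at $1$, hence $\rigdim=2$. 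When $s_1<n-1$ the lower bound $\ge 3$ is Lemma \ref{k02}, and the upper bound $\le 3$ follows because $k_0=2$ forces $\rd(t)\le 1$ for all $t\le m/2$ by Theorem \ref{theo-rigdeg-selfinj-nak} (the relevant $l$ is $d+1$ or even, giving odd rigidity degree $2\Fb_l-1$, and $\Fb_l=1$ whenever $k_0=2$ makes the first weighted Fibonacci step trivial — again a boundary check), so no generator-cogenerator can have rigidity degree $>1$ and $\rigdim\le 3$.

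For part $(1)$, where $k_0=1$ and $d>0$, the lower bound $\rigdim\ge 2k_1+2$ is already in Lemma \ref{k01}. The first step is to improve this to $2k_1+3$ precisely when $k_2=1$ and $s_3\le s_2-2$: here I would exhibit a specific generator-cogenerator — a suitable $N_t^\delta$ or a small modification — with $\rd$ equal to $2k_1+1$, by choosing $t$ in the range where Theorem \ref{theo-rigdeg-selfinj-nak} gives $\rd(t)=2\Fb_2-1=2(k_2\Fb_1+\Fb_0)-1=2(k_1+1)-1=2k_1+1$ (using $k_2=1$), and then verifying $\gldim\End_\Lambda$ is finite via Proposition \ref{gldimM} and that $\rd$ of the whole module equals $\rd(t)$ via Proposition \ref{prop-rdNtdelta} (after checking the hypotheses $m-\delta-t\le t\le m/2$ and $\delta=\max\{n,(k_0-1)n\}=n$ since $k_0=1$). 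The condition $s_3\le s_2-2$ is exactly what guarantees such a $t$ (with $s_3<t<s_2$, $l=2$ even) actually exists as an integer.

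The second, harder step is the matching upper bound: showing that whenever we are in the "otherwise" regime, $\rigdim\le 2k_1+2$, and in the special regime $\rigdim\le 2k_1+3$. Equivalently, any generator-cogenerator $M=N\oplus\Lambda$ with $\rd(M)\ge 2k_1+1$ (respectively $\ge 2k_1+2$) must have $\gldim\End_\Lambda(M)=\infty$. This is where Proposition \ref{cru-prop} is the engine: taking $l=1$ (odd, $<d+1$ since $d>0$) we have $2\Fb_1=2k_1$, so any $M$ with $\rd(M)\ge 2k_1+1=2\Fb_1+1$ and with both $N\oplus\Lambda$ and $\Omega_\Lambda(N)\oplus\Lambda$ $\delta$-free has infinite global dimension. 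The main obstacle, and the bulk of the work, is therefore a reduction argument: given an arbitrary $M$ with large $\rd$, reduce to the $\delta$-free case. One uses Lemma \ref{HS} (replacing $N$ by $\Omega_\Lambda(N)$ preserves both global and dominant dimension, hence one may iterate syzygies) together with the combinatorial description of $\widehat M$ and $\Omega_M$ from Subsection \ref{compute-mdim} to argue that if $M$ is not $\delta$-free one can strip away the offending summands (those $(x,t_x)\in\widehat M$) without decreasing $\rd$ below the threshold and without making the global dimension finite — or else directly produce an $M$-periodic module. The case distinction ($s_3\le s_2-2$ versus not, and $k_2=1$ versus $k_2>1$) enters through exactly which value of $\rd(M)$ is the largest one for which a finite-global-dimension resolution algebra survives the $\delta$-free reduction; one checks via Theorem \ref{theo-rigdeg-selfinj-nak} that in the "otherwise" case every attainable rigidity degree above $2k_1$ is killed, whereas when $k_2=1$ and $s_3\le s_2-2$ the degree $2k_1+1$ is realized by the $\delta$-free $N_t^\delta$ above and $2k_1+2$ is the largest killed value. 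I expect the delicate point to be verifying the $\delta$-freeness hypotheses of Proposition \ref{cru-prop} after the syzygy reductions, since $\Omega_\Lambda$ interacts nontrivially with the extra top-layer (projective) structure encoded in $\widehat M$.
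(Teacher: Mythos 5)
Your overall skeleton (lower bounds from Lemmas \ref{k01}, \ref{k02} and the modules $N^{n}_t$; upper bounds via Proposition \ref{cru-prop} and Lemma \ref{HS}) matches the paper, but both of your upper-bound arguments have genuine gaps. In part (2) the claim that the bound follows from rigidity degrees of indecomposables is false, and the computations behind it are wrong: for $k_0=2$ and $s_1=n-1$ one has $m=3n-1$ (not $2n-1$) and $d=1$ for $n>2$ (not $d=-1$), and Theorem \ref{theo-rigdeg-selfinj-nak} gives $\rd(t)=2\Fb_1=2$ for all $1\leq t\leq n-1$; likewise when $s_1<n-1$ the modules with $s_2\leq t\leq s_1$ have $\rd(t)=2k_1\geq 2$, so ``$\rd(t)\leq 1$ for all $t$'' fails. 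The upper bound really does require Proposition \ref{cru-prop}: if $\gldim\End_{\Lambda}(M)<\infty$ and $\rd(M)\geq 1$, then (after Lemma \ref{HS}) some $(x,t_x)\in\widehat{M}$, and having \emph{both} $\pi(x,t_x)$ and $\pi(x,n+t_x)$ as summands with rigidity degree $\geq 1$ forces $s_1<t_x<n$ --- impossible when $s_1=n-1$, and forcing $\rd(M)\leq\rd(t_x)=2\Fb_0-1=1$ when $s_1<n-1$.

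In part (1) your ``reduction to the $\delta$-free case'' goes in the wrong direction and would fail. Note that $N^{n}_{s_3+1}$ is itself \emph{not} $\delta$-free (it contains both $(0,t)$ and $(0,n+t)$), so the optimal generator-cogenerators live on the non-$\delta$-free side; stripping summands to reach a $\delta$-free module $M'$ proves nothing, since $\gldim\End_{\Lambda}(M')=\infty$ for a direct summand $M'$ of $M$ does not imply $\gldim\End_{\Lambda}(M)=\infty$, and producing an $M$-periodic module cannot succeed uniformly in the regime $k_2=1$, $s_3\leq s_2-2$, where finite global dimension is actually attained at rigidity degree $2k_1+1$. The missing step is to use the failure of $\delta$-freeness directly: if $\gldim\End_{\Lambda}(M)<\infty$ and $\rd(M)\geq 2\Fb_1+1=2k_1+1$, then Proposition \ref{cru-prop} with Lemma \ref{HS} yields $(x,t_x)\in\widehat{M}$, so $\pi(x,t_x)$ and $\pi(x,n+t_x)$ are both summands of $M$; since $\rd(M)>2\Fb_1=\rd(s_2)$, Theorem \ref{theo-rigdeg-selfinj-nak} forces $t_x<s_2$ and $m-n-t_x<s_2$, i.e.\ $s_1-s_2<t_x<s_2$. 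This interval contains an integer if and only if $s_1\leq 2s_2-2$, equivalently $k_2=1$ and $s_3\leq s_2-2$, which kills the ``otherwise'' case outright; and when it is nonempty, $t_x>s_3$ gives $\rd(M)\leq\rd(s_3+1)=2\Fb_2-1=2k_1+1$, hence $\rigdim(A_{n,m})\leq 2k_1+3$. Without this arithmetic step the case distinction in the statement cannot be derived.
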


\begin{proof}
Note that $\delta=n$ for both $k_0=1$ and $2$.

(1) By Lemma \ref{k01}, we already have $\rigdim A_{n,m}\geq 2k_1+2$. Assume now that $\rigdim A_{n,m}>2k_1+2$. Then there is a generator-cogenerator   $M=\Lambda\oplus N$  with $\gldim\End_{\Lambda}(M)<\infty$ and $\rd(M)\geq 2\Fb_1+1=2k_1+1$. Note that $\rd(\Lambda\oplus N)=\rd(\Lambda\oplus \Omega_{\Lambda}(N))$ and $\gldim\End_{\Lambda}(\Lambda\oplus N)=\gldim\End_{\Lambda}(\Lambda\oplus\Omega_{\Lambda}(N))$ by Lemma \ref{HS}. Thus, we can assume that there is an integer $x$ such that $(x,t_x)\in \widehat{M}$ by Proposition \ref{cru-prop}.
Since $\rd(M)\geq 2F_1+1$ and that both $(x,n+t_x)$ and $(x,t_x)$ are in $\widehat{M}$, by Theorem \ref{theo-rigdeg-selfinj-nak}, we have $m-n-t_x<s_2$ and $t_x<s_2$. Hence $s_1-s_2=m-n-s_2<t_x<s_2$ and particularly $s_1-s_2+1\leq s_2-1$.
Comparing with $s_1=k_2s_2+s_3$, one gets that $k_2=1$ and $s_3< s_2-1$. It follows that $m/2\geq s_2>t_x>s_1-s_2=s_3$. Note that   $(x,t_x)\in\widehat{M}$. Hence $\rd(M)\leq \rd(s_3+1)$ and   $\rigdim(A_{n,m})\leq \rd(s_3+1)+2=2\Fb_2-1+2=2(k_2\Fb_1+\Fb_0)+1=2k_1+3$(cf. Theorem \ref{theo-rigdeg-selfinj-nak}). 

Now assume that $k_2=1$ and $s_3<s_2-1$. Let $t=s_3+1$ in Definition \ref{mod}. Then $\gldim\End_{\Lambda}(N^{n}_t)<\infty$ and $\rd(N^{n}_t)=\rd(s_3+1)$ by Proposition \ref{gldimM} and 
Proposition \ref{prop-rdNtdelta}. Thus
 $\rigdim(A_{n,m})\geq \rd(s_3+1)+2=2k_1+3>2k_1+2$. 
 Moreover, $\rigdim(A_{n,m})\leq 2k_1+3$ also holds by the above paragraph. Altogether, we have  $\rigdim(A_{n,m})=2k_1+3$ in this case. The above paragraph also tells us that $\rigdim (A_{n,m})=2k_1+2$ when the condition ``$k_2=1$ and $s_3<s_2-1$" does not hold. This proves (1).

\medskip
(2) Now consider $k_0=2$. Let $M$ be a generator-cogenerator with $\gldim\End_{\Lambda}(M)<\infty$ and $\rd(M)\geq 1$. By Proposition \ref{cru-prop} again, one can assume that there is some $x\in \Z$ such that $(x,t_x)\in\widehat{M}$. Now the assumption that $\rd(M)\geq 1$ implies that $t_x<n$ and $m-(n+t_x)<n$. That is $s_1<t_x<n$. If $s_1=n-1$, then there is no such $x$. This leads to a contradiction. Thus  $\rigdim(A_{n,m})=2$ in this case.
Assume that $s_1<n-1$. Then $\rd(t_x)=2\Fb_0-1=1$. Since $(x,t_x)\in\widehat{M}$, we have $\rd(M)\leq 1$ and  $\rigdim(A_{n,m})\leq 3$. Let $t=n-1$ in Definition \ref{mod}, then $\gldim\End_{\Lambda}(N^{n}_t)<\infty$ and $\rd(N^{n}_t)=\rd(n-1)=1$ by Proposition \ref{gldimM} and Proposition \ref{prop-rdNtdelta}. Hence $\rigdim(A_{n,m})\geq 3$. Altogether this follows that $\rigdim A_{n,m}=3$ when $s_1<n-1$.
\end{proof}

The combination of Lemma \ref{k01}, \ref{k02} and Theorem \ref{K012eq} provides the complete result for the case when $k_0\leq 2$. Now we switch to $k_0\geq 3$.

\begin{Prop}\label{rigdimk03}
Assume that $k_0\geq 3$. Let $M=N\oplus \Lambda$ be a generator-cogenerator with $\rd(M)\geq 1$.
 If either $N\oplus \Lambda$ or $\Omega_{\Lambda}(N)\oplus \Lambda$ is not $\delta$-free, then $\gldim\End_{\Lambda}(M)=\infty$.
\end{Prop}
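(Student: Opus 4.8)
The plan is to show that $\End_\Lambda(M)$ has infinite global dimension by producing an $M$-periodic module; by the remark following the definition of $M$-periodicity, together with Lemma~\ref{AUS}, this suffices. First I would reduce to the case in which $N\oplus\Lambda$ itself is not $\delta$-free: if instead only $\Omega_\Lambda(N)\oplus\Lambda$ is not $\delta$-free, it is again a generator–cogenerator (it contains $\Lambda$, which is projective–injective), its rigidity degree equals $\rd(M)\geq 1$ because $\rd$ is invariant under $\Omega_\Lambda$, and by Lemma~\ref{HS} it has the same global dimension of endomorphism algebra as $M$, so we may replace $N$ by $\Omega_\Lambda(N)$. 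After a rotation of the coordinates I would then fix a witness $(0,t)\in\widehat M$ where $t:=t_0$ is minimal with $(0,\delta+t)\in\widehat M$; note that $0\le t\le m-\delta=n+s_1$ and, crucially, $\delta=(k_0-1)n\geq 2n>n+s_1$ because $k_0\geq 3$.

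Next I would dispose of the extreme positions of $t$ by a direct computation with the combinatorial $\Ext$-criterion of Section~\ref{sect-preliminaries}, where the relevant self-extensions are already detected inside a single fundamental domain of $\langle\tau^n\rangle$. One checks that $\pi(0,t)$ has a self-extension whenever $t\geq n$ (including $t=n+s_1$), that $\pi(0,\delta)$ has a self-extension (the case $t=0$), and that $\pi(0,\delta+t)$ has a self-extension whenever $1\leq t\leq s_1$. In each situation $\add(M)$ contains a module with a non-trivial self-extension, contradicting $\rd(M)\geq 1$. This forces $s_1<t<n$, which is the only position requiring real work; in particular the statement is vacuous when $s_1=n-1$.

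In the remaining case $s_1<t<n$ I would extract from $\rd(M)\geq 1$ the vanishing of $\Ext^1_\Lambda$ between each of $\pi(0,t),\pi(0,\delta+t)$ and every $\pi(v)$ with $v\in\widehat M$. Via the $H^{\pm}$-description, these vanishings forbid $\widehat M$ a union $R$ of ``cones'' of vertices of $\overline{\mathbb{Z}A_{m-1}}$; because $\delta=(k_0-1)n\geq 2n$ is large, $R$ is big enough that one can choose an indecomposable $X=\pi(v_0)$, with $v_0\in R$ on neither the bottom nor the top layer (so $X\notin\add(M)$), whose entire $\Omega_M$-orbit is forced to remain inside $R$ — and it is exactly this largeness that breaks down when $k_0\leq 2$. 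Inside $R$ the set $\widehat M$ consists only of the two boundary layers, so the recipe for $\Omega_M$ from Subsection~\ref{compute-mdim} collapses: the relevant rectangle $E^{\cdot}_{\cdot}$ meets $\widehat M$ only at a corner, hence $\Omega_M(\pi(v))=\pi(v')$ is again indecomposable, with $v'$ obtained from $v$ by the rule that governs the ordinary syzygy $\Omega_\Lambda$, up to a translation keeping us inside $R$. Consequently the $\Omega_M$-orbit of $X$ runs through the finitely many vertices of a single $\Omega_\Lambda$-orbit contained in $R$, so $X\in\add\Omega_M^r(X)$ for some $r\geq 1$: $X$ is $M$-periodic, $\Mdim{M}(X)=\infty$, and $\gldim\End_\Lambda(M)=\infty$.

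The main obstacle I expect is the third paragraph: pinning down the forbidden region $R$ precisely and, above all, choosing $v_0$ so that its $\Omega_M$-orbit genuinely stays inside $R$ (so that the summands of $M$ outside $R$ never interfere with the minimal $\add(M)$-approximations) and returns to its starting point. This is a case-by-case analysis on $\overline{\mathbb{Z}A_{m-1}}$ of the same flavour as, but lighter than, the proof of Proposition~\ref{cru-prop}; the delicate bookkeeping concerns the positions of $\pi(0,t)$ and $\pi(0,\delta+t)$ relative to $n$, $s_1$ and $\delta$, which is where the real effort lies.
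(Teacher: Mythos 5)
Your first two paragraphs are sound and coincide with the paper's own setup: Lemma \ref{HS} reduces to the case where $N\oplus\Lambda$ itself is not $\delta$-free, and $\rd(M)\geq 1$ confines the non-projective, non-virtual vertices of $\widehat{M}$ to the bands $[1,n-1]$ and $[m-n+1,m-1]$, which together with $(x,t_x),(x,\delta+t_x)\in\widehat{M}$ forces $s_1<t_x<n$ (so in particular $(x,\delta+t_x)$ is non-projective).

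The gap is your third paragraph, and it is not mere bookkeeping: the mechanism you propose for producing an $M$-periodic module is not the one that operates here, and I do not think it can be realized. You want a region $R$ in which every relevant approximation rectangle meets $\widehat{M}$ only at a corner, so that $\Omega_M$ degenerates to $\Omega_{\Lambda}$ up to a translation. But every column $y$ carries the vertex $(y,\delta+t_y)\in\widehat{M}$, and in the witness column this vertex is non-projective; it is precisely the top of the approximation rectangle of $\pi(x,n+t_x)$, and the $\tau^n$-translate $(x+n,t_x)$ of the witness lies on that rectangle's lower edge. The paper's proof exploits this participation instead of avoiding it: it takes $X=\pi(x,n+t_x)$ and produces the two standard $M$-exact sequences
$$0\lra\pi(x+n,\delta-n+t_x)\lra\pi(x,\delta+t_x)\oplus\pi(x+n,t_x)\lra\pi(x,n+t_x)\lra 0$$
and
$$0\lra\pi(x+n,n+t_x)\lra\pi(x+2n-\delta,\delta+t_x)\oplus\pi(x+n,t_x)\lra\pi(x+2n-\delta,\delta-n+t_x)\lra 0,$$
whose middle terms lie in $\add(M)$ because $\widehat{M}$ is $\tau^n$-stable and $\delta\equiv 0\pmod n$; their minimality follows from the emptiness of two rectangles contained in sets of the form $H^{\pm}(\tau^{\mp}(\cdot))$ attached to summands of $M$ — this is where $\rd(M)\geq 1$ and $\delta=(k_0-1)n>n$ enter — and together they give $\Omega_M^2X\cong X$. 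Note that $\Omega_MX=\pi(x,\delta-n+t_x)$ has length $\delta-n+t_x$, whereas $\Omega_{\Lambda}X$ has length $m-n-t_x=\delta+s_1-t_x$; these agree only when $2t_x=n+s_1$, so no translation identifies $\Omega_M$ with $\Omega_{\Lambda}$ along this orbit. As written, your plan omits the actual construction and verification of the periodic module, which is the entire content of the proposition.
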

\begin{proof}
Note that $\rd(\Lambda\oplus N)=\rd(\Lambda\oplus \Omega_{\Lambda}(N))$ and $\gldim\End_{\Lambda}(\Lambda\oplus N)=\gldim\End_{\Lambda}(\Lambda\oplus\Omega_{\Lambda}(N))$ by Lemma \ref{HS}. Then we can assume that $M$ is not $\delta$-free by our assumption, that is,
there is an integer $x$ such that $(x,t_x)\in \widehat{M}$.
Note that $\delta=(k_0-1)n>n$ when $k_0\geq 3$. We claim that
 $\pi(x,n+t_x)$ is $M$-periodic.

Since $\rd(M)\geq 1$, it follows that all non-projective vertices of $\widehat{M}$ fall between $(-,1)$ and $(-,n-1)$ or between $(-,m-n+1)$ and $(-,m-1)$, as shown in Figure \ref{fig-k0is3}.

\begin{figure}
    \centering 
    
$$\begin{tikzpicture}[scale=0.3]
\draw [-,blue](-12.5,13.5)-- (-7.5,18.5)--(5.5,5.5)--(0.5,0.5)--cycle;
\draw [-,dotted](-13,13)-- (-7,19)--(6,6)--(0,0)--cycle;
\draw [-,blue] (12.5,13.5)-- (7.5,18.5)--(-5.5,5.5)--(-0.5,0.5)--cycle;
\draw [-,dotted](13,13)-- (7,19)--(-6,6)--(0,0)--cycle;
\fill[fill=black]  (0,0) circle (1pt) node[right] {${\scriptstyle(x+n,t_x)}$};
\fill[fill=black]  (6,6) circle (1pt) node[right] {${\scriptstyle (x,n+t_x)}$};
\fill[fill=black]  (-7,19) circle (1pt) node[above] {${\scriptstyle (x,\delta+t_x)}$};
\fill[fill=black]  (-13,13) circle (1pt) node[left] {${\scriptstyle (x+n,\delta-n+t_x)}$};
\fill[fill=black]  (13,13) circle (1pt) node[right] {${\scriptstyle(x+2n-\delta,\delta-n+t_x)}$};
\fill[fill=black]  (-6,6) circle (1pt) node[left] {${\scriptstyle (x+n,n+t_x)}$};
\fill[fill=black]  (7,19) circle (1pt) node[left] {${\scriptstyle(x+2n-\delta,\delta+t_x)}$};
\draw[-] (-20,18)--(20,18);
\draw[-] (-20,23)--(20,23);
\draw[-] (-20,2)--(20,2);
\draw[-] (-20,-3)--(20,-3);
\node at ( 22,23) {${\scriptstyle m-1}$};
\node at ( 23,18) {${\scriptstyle m-n+1}$};
\node at ( 22,2) {${\scriptstyle n-1}$};
\node at ( 21,-3) {${\scriptstyle 1}$};
\draw [-,dashed](-18,19)-- (-14,23)--(8,1)--(4,-3)--cycle;
\draw [-,dashed](-15,20)-- (-12,23)--(11,0)--(8,-3)--cycle;
\fill[fill=black]  (11,0) circle (1pt) node[right] {${\scriptstyle(x+1,t_x)}$};
\fill[fill=black]  (-18,19) circle (1pt) node[left] {${\scriptstyle (x+n-1,\delta+t_x)}$};
\draw [-,dashed](-11,0)-- (-8,-3)--(15,20)--(12,23)--cycle;
\draw [-,dashed](-4,-3)-- (-8,1)--(14,23)--(18,19)--cycle;
\fill[fill=black]  (-11,0) circle (1pt) node[left] {${\scriptstyle(x+2n+1,t_x)}$};
\fill[fill=black]  (18,19) circle (1pt) node[above] {${\scriptstyle (x+n-\delta+1,\delta+t_x)}$};
\node at ( -4,10) {\color{blue}$\mu_1$};
\node at (4,10) {\color{blue}$\mu_2$};
\end{tikzpicture}$$
\caption{}
\label{fig-k0is3}
\end{figure}
Let $\mu_1$ be the set of vertices in the rectangle determined by $(x+1,\delta+t_x-1)$ and $(x+n-1,t_x+1)$, and let $\mu_2$ be the set of vertices in the rectangle determined by $(x+n,t_x+1)$ and $(x+2n-\delta,\delta+t_x-1)$.
 Since $\mu_1\subseteq H^+(\tau^-(x+n,\delta+t_x))\cup H^-(\tau(x,t_x))$ and
$\mu_2\subseteq H^-(\tau(x+n-\delta,\delta+t_x))\cup H^+(\tau^-(x+2n,t_x))$, all vertices in $\mu_1$ and $\mu_2$ don't belong to $\widehat{M}$. 
This implies the following 
sequences are standard $M$-exact sequences.
$$0\lra \pi(x+n,\delta-n+t_x)\lra \pi(x,\delta+t_x)\ \pi(x+n,t_x)\lra \pi(x,n+t_x)\lra 0,$$
$$0\lra \pi(x+n,n+t_x)\lra \pi(x+2n-\delta,\delta+t_x)\oplus \pi(x+n,t_x)\lra \pi(x+2n-\delta,\delta-n+t_x)\lra 0.$$
All the middle terms of the above two exact sequences are in $\add(M)$. Since $\pi(x+kn,t)=\pi(x,t)$ for all $k\in\mathbb{Z}$ and $1\leq t\leq m$, the above two exact sequences tell us that $\Omega^2_{M}\pi(x,n+t_x)=\pi(x,n+t_x)$. It follows that  $\pi(x,n+t_x)$ is $M$-periodic. Thus
$\gldim\End_{\Lambda}(M)=\infty$. This finishes the proof.
\end{proof}

The following result directly follows from  Proposition \ref{cru-prop} and \ref{rigdimk03}.
\begin{Koro}\label{K03eq}
Keep the notations above. Assume that $k_0\geq 3$, then $\rigdim(A_{n,m})=2$.
\end{Koro}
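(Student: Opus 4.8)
The plan is to prove the two inequalities $\rigdim(A_{n,m})\geq 2$ and $\rigdim(A_{n,m})\leq 2$ separately. The lower bound is essentially free: since $A_{n,m}$ is representation-finite, the sum $M_0$ of all indecomposable modules is a generator-cogenerator whose additive closure contains a complete slice, so $\gldim\End_{\Lambda}(M_0)<\infty$ by Corollary \ref{coro-slice} (in fact $\End_{\Lambda}(M_0)$ is the Auslander algebra, of global dimension $\leq 2$); hence $\rigdim(A_{n,m})\geq\rd(M_0)+2\geq 2$. Equivalently, one may quote Proposition \ref{gldimM} with $\delta\in\{0,1\}$. So the substance of the statement is the bound $\rigdim(A_{n,m})\leq 2$, i.e. that every generator-cogenerator $M$ with $\gldim\End_{\Lambda}(M)<\infty$ satisfies $\rd(M)=0$.

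For the upper bound I would prove the contrapositive: if $M$ is a generator-cogenerator with $\rd(M)\geq 1$, then $\gldim\End_{\Lambda}(M)=\infty$. Write $M=N\oplus\Lambda$ and set $\delta:=(k_0-1)n$, which equals $\max\{n,(k_0-1)n\}$ since $k_0\geq 3$. There are two cases. If $N\oplus\Lambda$ or $\Omega_{\Lambda}(N)\oplus\Lambda$ fails to be $\delta$-free, then $\gldim\End_{\Lambda}(M)=\infty$ directly by Proposition \ref{rigdimk03}. Otherwise both $N\oplus\Lambda$ and $\Omega_{\Lambda}(N)\oplus\Lambda$ are $\delta$-free, and here I would conclude by Proposition \ref{cru-prop}.

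To apply Proposition \ref{cru-prop} in this second case I take the index $l=-1$. This is a legitimate choice precisely because $k_0\geq 3$: the integer $l=-1$ is odd, it satisfies $l<d+1$, and $s_{l+1}=s_{0}=n$ lies strictly between $0$ and $m/2$ since $2n<k_0 n\leq k_0 n+s_1=m$. Moreover $\Fb_{-1}=0$ by the recursion defining the weighted Fibonacci sequence, so the numerical hypothesis ``$\rd(M)\geq 2\Fb_l+1$'' becomes simply ``$\rd(M)\geq 1$'', which holds. As both $N\oplus\Lambda$ and $\Omega_{\Lambda}(N)\oplus\Lambda$ are $\delta$-free, Proposition \ref{cru-prop} yields $\gldim\End_{\Lambda}(M)=\infty$. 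Thus $\gldim\End_{\Lambda}(M)=\infty$ in both cases, so no generator-cogenerator with finite-global-dimension endomorphism algebra has positive rigidity degree, and therefore $\rigdim(A_{n,m})=2$.

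Granting Propositions \ref{cru-prop} and \ref{rigdimk03}, there is no real obstacle in the Corollary itself; the one point worth stating carefully is that $l=-1$ is an admissible index for Proposition \ref{cru-prop} exactly when $k_0\geq 3$ and that $\Fb_{-1}=0$, which is what lets the ``$\rd(M)\geq 2\Fb_l+1$'' hypothesis collapse to the bare $\rd(M)\geq 1$. The genuinely hard work — the $M$-periodicity constructions on $\overline{\Z A_{m-1}}$ underlying those two propositions (a short syzygy-loop argument as in Figure \ref{fig-k0is3} for Proposition \ref{rigdimk03}, and the appendix argument for Proposition \ref{cru-prop}) — is already supplied, and is where the difficulty would lie if one proved the Corollary from scratch.
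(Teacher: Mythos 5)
Your proof is correct and follows exactly the route the paper intends: the dichotomy on $\delta$-freeness, with Proposition \ref{rigdimk03} handling the non-$\delta$-free case and Proposition \ref{cru-prop} applied with $l=-1$ (so $\Fb_{-1}=0$ and $s_{l+1}=n<m/2$ because $k_0\geq 3$) handling the $\delta$-free case; the lower bound via the Auslander generator is standard. The only quibble is your closing remark that $l=-1$ is admissible ``exactly when'' $k_0\geq 3$ — it is also admissible for $k_0=2$ with $s_1>0$, as the paper itself uses in Theorem \ref{K012eq}(2) — but this does not affect the argument.
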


Finally, we summarize the above results to give a proof of Theorem \ref{rigdim_all0}.
\begin{proof}[Proof of Theorem \ref{rigdim_all0}]
Note that, for self-injective algebras, rigidity dimension is invariant under derived equivalences by 
\cite[Corolllary 5.4,]{chen2021rigidity}, and a representation-finite self-injective algebra $\Lambda$ of type $(A_{m-1},n/(m-1),1)$ is derived equivalent to a self-injective Nakayama algebra with $n$ simples and Loewy length $m$ by \cite[A2.1.1]{Asashiba2000}. If $n=1$, then every non-projective $\Lambda$-module has rigidity degree $0$ and thus $\rigdim \Lambda=2$. If $m\geq n>1$, then the result follows from Lemma \ref{k01}, Theorem \ref{K012eq} and Corollary \ref{K03eq} immediately. 
\end{proof}

\section{Appendix: a proof of Proposition \ref{cru-prop}}
This section is devoted to giving a proof of Proposition \ref{cru-prop}. Let us recall the conditions first. Let $m\geq n$ be positive integers. The Euclidean algorithm gives
$$\begin{aligned}
m&=k_0n+s_1\\
n&=k_1s_1+s_2\\
&\cdots\cdots\\
s_d&=k_{d+1}s_{d+1}
\end{aligned}$$
Let $\delta=\max\{n,(k_0-1)n\}$ and let $\Lambda=A_{n,m}$ be the self-injective Nakayama algebra with $n$ simple modules and Loewy length $m$. $M=N\oplus \Lambda$ is a generator-cogenerator over $\Lambda$ with $\rd(M)\geq 2\Fb_l+1$, where $l<d+1$ is odd with $s_{l+1}\leq m/2$. Assume that both $\Lambda\oplus N$ and $\Lambda\oplus\Omega_{\Lambda}(N)$ are $\delta$-free. We need to prove that $\gldim\End_{\Lambda}(M)=\infty$.   

The assumption $\rd(M)> 2\Fb_{l}=\rd(s_{l+1})$ implies that each vertex $(x,t)$ in $\widehat{M}$ must satisfy $t<s_{l+1}$ or $t>m-s_{l+1}$. Let $s<s_{l+1}$ be the maximal integer such that there is some vertex $(x,s)$ or $(x,m-s)$ in $\widehat{M}$. Then the vertices of $\widehat{M}$ lie in the shadowed area of the figure below. 

\begin{center}
     \begin{tikzpicture}[scale=0.6]
\draw (-4,4) -- (4,4) node[right] {${\scriptstyle m}$};
\draw (-4,3) -- (4,3) node[right] {${\scriptstyle m-s}$};
\draw (-4,1) -- (4,1) node[right] {${\scriptstyle s}$};
\draw (-4,0) -- (4,0) node[right] {${\scriptstyle 0}$};
\fill[color=gray!30] (-4,4)--(4,4)--(4,3)--(-4,3)--cycle;
\fill[color=gray!30] (-4,1)--(4,1)--(4,0)--(-4,0)--cycle;
\end{tikzpicture}
\end{center}
For simplicity, we write $t^{\vee}:=m-t$. Then $t+t^{\vee}=m$ and  $\omega(x,t)=(x+t,t^{\vee})$. 
Without loss of generality, we assume that $\widehat{M}$ contains a vertex $(a,s^{\vee})$ (Otherwise, consider $\Lambda\oplus \Omega_{\Lambda}(N)$ instead). It follows that 
$$t_{\min}=\min\{t_x\mid x\in\mathbb{Z}\}=m-s-\delta=\delta^{\vee}-s.$$ 
Thus $\widehat{M}$ contains no vertices $(a,t)$ with $\delta^{\vee}-t_{\min}<t<\delta+t_{\min}$.

\medskip 
We divide our discussion into two cases $t_{\min}\geq s_{l+1}$ and $t_{\min}<s_{l+1}$.

\subsection*{Case I: $t_{\min}\geq s_{l+1}$}
For each integer $x$, we define $\tilde{t}_x$ to be the minimal integer with $m-s\leq \delta+\tilde{t}_x\leq \delta+t_x$ such that $\widehat{M}$ contains some vertex $(u, \delta+\tilde{t}_x+x-u)$ with $x\leq u\leq x+\delta+\tilde{t}_x-(m-s)=\tilde{t}_x+x-t_{\min}$. Let $$u_x=\min\{u\mid (u,\delta+\tilde{t}_x+x-u)\in\widehat{M}\}.$$
This can be explained graphically as Figure \ref{fig-tilde-t}.
\begin{figure}[h]
    \centering
    \begin{tikzpicture}
\fill[color=gray!30] (0.5,0.5) -- (0,0)--(1,0) --cycle;
\draw (0,1) --(1,0) --(-1,0) --cycle;
\draw (-1.5,0) -- (1.5,0) node[right] {$m-s$}; 
\fill[fill=black] (0,1) circle(1pt) node[right] {${\scriptstyle (x,\delta+t_x)}$};
\fill[fill=black] (0.5,0.5) circle(1pt) node[right] {${\scriptstyle (x,\delta+\tilde{t}_x)}$};
\draw[-,dotted] (0,0) -- (0.5,0.5);
\fill[fill=black] (0.25,0.25) circle(1pt);
\end{tikzpicture}
    \caption{}
    \label{fig-tilde-t}
\end{figure}
None of the vertices in  the shadowed triangle which are not on the dotted line belongs to $\widehat{M}$, and $\widehat{M}$ does contain some vertices on the dotted line. The vertex $(u_x, \delta+\tilde{t}_x+x-u_x)$ is the highest vertex on the dotted line which belongs to $\widehat{M}$. By definition, we deduce that $\delta+t_{u_x}=\delta+\tilde{t}_x+x-u_x$. Hence
$$t_{u_x}=\tilde{t}_x+x-u_x.$$
Note that $m-s\leq \delta+\tilde{t}_x\leq m$. This implies that $$s<s_{l+1}\leq t_{\min}=m-\delta-s\leq\tilde{t}_x\leq m-\delta\leq m-s_{l+1}<m-s.$$
Hence $(x,\tilde{t}_x)\notin\widehat{M}$ for all $x\in\mathbb{Z}$. 
\begin{Prop}
\label{prop-caseI}
Keep the notations above. For each integer $x$, there is some integer $y$ such that $\pi(y,\tilde{t}_y)$ is a direct summand of $\Omega_M^2\pi(x,\tilde{t}_x)$. Particularly, $\gldim\End_{\Lambda}(M)=\infty$. 
\end{Prop}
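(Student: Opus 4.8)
I would deduce the ``particularly'' clause from the first assertion by a pigeonhole argument, and prove the first assertion by exhibiting explicit exact sequences, exactly as in the proofs of Propositions~\ref{gldimM} and \ref{rigdimk03}. First observe that the family $\mathscr{F}:=\{\pi(x,\tilde t_x)\mid x\in\mathbb{Z}\}$ has only finitely many isomorphism classes: $\widehat{M}$ is invariant under $(x,t)\mapsto(x+n,t)$, hence so is $x\mapsto\tilde t_x$ (that is, $\tilde t_{x+n}=\tilde t_x$), so $\pi(x,\tilde t_x)=\pi(x+n,\tilde t_{x+n})$. Since $\Omega_M$ carries direct summands to direct summands, the assertion ``for each $x$ some $\pi(y,\tilde t_y)$ lies in $\add\Omega_M^2\pi(x,\tilde t_x)$'' defines a (possibly multivalued) self-map of the finite set of isomorphism classes in $\mathscr{F}$; following any orbit produces $Z\in\mathscr{F}$ and $p\geq 1$ with $Z\in\add\Omega_M^{2p}(Z)$, that is, $Z$ is $M$-periodic. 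By the observation recorded immediately after the definition of $M$-periodicity, together with Lemma~\ref{AUS}, this gives $\Mdim{M}(Z)=\infty$ and hence $\gldim\End_{\Lambda}(M)=\infty$. So everything reduces to the first assertion.

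\textbf{Producing the syzygy.} Fix $x$. The inequalities $s<s_{l+1}\leq t_{\min}\leq\tilde t_x\leq m-\delta<m-s$ recorded before the statement show that $(x,\tilde t_x)\notin\widehat{M}$ and, since every vertex of $\widehat{M}$ has ordinate $\leq s$ or $\geq m-s=\delta+t_{\min}$, that the ceiling of $(x,\tilde t_x)$ in its column is $h^{\tilde t_x}_x=\delta+t_x$. I would then run the knitting recipe of Subsection~\ref{compute-mdim} inside the parallelogram $E^{\tilde t_x}_x$; the point that makes this tractable --- and that the definitions of $\tilde t_x$ and $u_x$ are designed to encode --- is that no column $u$ with $u-x<\tilde t_x-s$ can meet the ``low'' part of $\widehat{M}$ inside this parallelogram, so the process first meets $\widehat{M}$ in column $u_x$ at the vertex $(u_x,\delta+t_{u_x})\in\widehat{M}$ with $t_{u_x}=\tilde t_x+x-u_x$, and then passes to the nested parallelogram $E^{t_{u_x}}_{u_x}$ (whose right corner is again $(x+\tilde t_x,0)$). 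Carrying this through, I expect to obtain an explicit $\Hom_{\Lambda}(M,-)$-exact sequence $0\to K\to M_1\to M_0\to\pi(x,\tilde t_x)\to 0$ with $M_0,M_1\in\add(M)$ and $\pi(y,\tilde t_y)$ a direct summand of $K$ for a suitable $y$ (the natural candidate being $u_x$, with $\tilde t_y$ read off from the high vertices of $\widehat{M}$ in that column). Since $\pi(y,\tilde t_y)\notin\add(M)$, comparing with minimal $\add(M)$-resolutions then forces $\pi(y,\tilde t_y)\in\add\Omega_M^2\pi(x,\tilde t_x)$, as required.

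\textbf{Where the work is.} The entire difficulty sits in the explicit construction of the sequence $0\to K\to M_1\to M_0\to\pi(x,\tilde t_x)\to 0$: one must run the knitting algorithm through the tower of nested parallelograms and verify, using \emph{only} the numerology $s<s_{l+1}\leq t_{\min}\leq\tilde t_x\leq m-\delta$, the maximality defining $s$, and the $\delta$-freeness of \emph{both} $N\oplus\Lambda$ and $\Omega_{\Lambda}(N)\oplus\Lambda$, that (i) no ``low'' vertex of $\widehat{M}$ interferes, (ii) the middle terms really lie in $\add(M)$ and the sequence stays exact under $\Hom_{\Lambda}(M,-)$, and (iii) the target summand $\pi(y,\tilde t_y)$ is not cancelled. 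This is the same flavour of case-analysis already carried out in the proof of Proposition~\ref{gldimM}, and I expect this computation --- together with its companion in Case~II --- to be the technical heart of the appendix; once it is in place, the periodicity conclusion above is immediate.
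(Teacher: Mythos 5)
Your reduction of the ``particularly'' clause is fine and in fact slightly different from the paper's: you pass to a finite set of isomorphism classes (via $\tau^n$-invariance of $\widehat{M}$) and extract an $M$-periodic module by pigeonhole, whereas the paper argues directly that $\Omega_M^{2k}\pi(x,\tilde{t}_x)$ has a non-$\add(M)$ summand for every $k$, so $\Mdim{M}\pi(x,\tilde{t}_x)=\infty$. Both are valid, and your route is the one the paper itself uses in Case II.

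The first assertion, however, is not proved: you explicitly defer ``the entire difficulty'' to a computation you do not carry out, and that computation is precisely the content of the proposition. Two concrete points show the gap is not merely one of detail. First, your inventory of what the verification should use ($s<s_{l+1}\leq t_{\min}\leq\tilde{t}_x\leq m-\delta$, the maximality of $s$, and $\delta$-freeness) omits the hypothesis that actually drives the argument, namely $\rd(M)\geq 2\Fb_l+1$. In the paper's proof, after the first approximation step one lands on the vertex $(y,\delta+\tilde{t}_x+x-y)$, where $(y,w)$ with $w\leq s$ is the first ``low'' vertex of $\widehat{M}$ met by the algorithm in the nested rectangle; a priori nothing prevents $\delta+\tilde{t}_x+x-y\geq m-s$, in which case this vertex could lie in $\widehat{M}$ and the resolution could stop. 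The paper rules this out by combining $H^+(\tau^-_{2\Fb_l+1}(u_x,\delta+t_{u_x}))\cap\widehat{M}=\emptyset$ with $\Fb_l m\equiv -s_{l+1}\pmod{n}$ to force $y\geq u_x+s_{l+1}$, and only then does the estimate $\delta+\tilde{t}_x+x-y<m-s$ (together with $\delta+\tilde{t}_x+x-y\geq\delta>s$) go through. Without invoking the rigidity bound your outline cannot certify that the second syzygy step happens at all. Second, your candidate for the distinguished summand is wrong: you propose $y=u_x$, but in the paper $\pi(y,\tilde{t}_y)$ sits in the column of the low vertex $(y,w)$, with $u_x+s_{l+1}\leq y\leq u_x+t_{u_x}$, not in the column $u_x$; the module $\pi(u_x,\delta+t_{u_x})$ appears only as a middle term of the approximation. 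So what you have is a correct high-level plan coinciding with the paper's, but the proposition's actual content --- the two-step tracking of a summand of the form $\pi(y,\tilde{t}_y)$ through minimal $\add(M)$-approximations --- remains unproved in your write-up.
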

\begin{proof}
According to the construction of $\add(M)$-approximation sequences, the whole process can be described in Figure \ref{fig-gldim-case1}. 
\begin{figure}[h]
    \centering
    \begin{tikzpicture}
\draw[-,dashed,fill=gray!30] (0.8,-0.5) -- (-6.7,7) node[dot]{} node[left] {${\scriptstyle (u_x+s_{l+1}-1,\;\delta+t_{u_x})}$} -- (-5.6,8.1) node[dot]{} node[above] {${\scriptstyle (u_x+t_{u_x}+s_{l+1}-\delta^{\vee},m-1)}$} --(1.9,0.6) --cycle;
\draw[-,dashed] (0,0) node[dot]{}  -- (5,5) node[dot]{} node[right] {${\scriptstyle (y-\delta,\delta+\tilde{t}_x+x-y)}$}  -- (3,7)node[dot]{} node[right] {${\scriptstyle (y-\delta,\delta+\tilde{t}_y)}$} -- (2.5,7.5)node[dot]{} node[right] {${\scriptstyle (y-\delta,\delta+t_y)}$} -- (1.9,6.9)   -- (2.4,6.4)node[dot]{} node[left] {${\scriptstyle (u_y-\delta,\delta+t_{u_y})}$} -- (0.5,4.5) -- (-2.5,7.5)  node[dot]{} node[right] {${\scriptstyle (x,\delta+\tilde{t}_x)}$}-- (-2.9,7.9) node[dot]{} node[right] {${\scriptstyle (x,\delta+t_x)}$} -- (-3.4,7.4) -- (-3,7) node[dot]{} node[left] {${\scriptstyle (u_x,\delta+t_{u_x})}$} -- (-5,5) node[dot]{} node[left] {${\scriptstyle (y,\delta+\tilde{t}_x+x-y)}$} -- (-2,2) node[dot]{} node[left] {${\scriptstyle (y,\tilde{t}_y)}$} -- (-1,1) node[dot]{} node[left] {${\scriptstyle (y,w)}$}  --cycle;
\draw[-,dotted] (0.5,4.5) -- (2.5,2.5) node[dot]{} node[right] {${\scriptstyle (x,\tilde{t}_x)}$};
\draw[-,dotted] (3,7)-- (-2,2);
\draw[-,dotted] (-3,7) -- (2,2) node[dot]{} node[right] {${\scriptstyle (u_x,t_{u_x})}$};
\draw[-,dotted] (2.4,6.4) -- (4.4,4.4);
\draw[-,dotted] (-3,7) -- (-2.5,7.5); 
\draw[-,dashed] (-6,1.5) -- (6,1.5) node[right] {${\scriptstyle s_{l+1}}$};
\draw[-,dashed] (-6,6.1) -- (6,6.1) node[right] {${\scriptstyle m-s_{l+1}}$};
\draw[-,dashed] (-6,8.1) -- (6,8.1) node[right] {${\scriptstyle m-1}$};
\draw[-,dashed] (-6,-0.5) -- (6,-0.5) node[right] {${\scriptstyle 1}$};
\end{tikzpicture}
    \caption{}
    \label{fig-gldim-case1}
\end{figure}

The integer $y$ satisfies $u_x<y\leq u_x+t_{u_x}$ and is minimal such that there are some vertices $(y,w)$ in $\widehat{M}$ with $t_{u_x}+u_x-y\leq w< \delta+\tilde{t}_x+x-y$. Here we take $w$ as minimal as possible. Such $(y,w)$ always exists since $(u_x+t_{u_x}, 0)\in\widehat{M}$ anyway. 

Since $s_{l+1}\leq \tilde{t}_x\leq m-s_{l+1}$, the vertices $(x,t)$ with $\tilde{t}_x\leq t<\delta+\tilde{t}_x$ are not in $\widehat{M}$. By the construction of $M$-resolutions given in Subsection \ref{compute-mdim}, the modules $\pi(x,\delta+t_x)$, $\pi(u_x,\delta+t_{u_x})$ and $\pi(y,w)$ occur as direct summands of the minimal right $\add(M)$-approximation of $\pi(x,\tilde{t}_x)$ and then $\pi(y,\delta+\tilde{t}_x+x-y)$ is a direct summand of $\Omega_{M}\pi(x,\tilde{t}_x)$. 

By the definition of $(u_x,t_{u_x})$, one can deduce that $(y,w)$ occurs in the lower half, that is $w\leq s$. This implies that $y\geq u_x+t_{u_x}-s$.

Note that $\omega^2=\tau^m$ and $\tau_r=\tau\omega^{r-1}$. Hence  $\tau_{2\Fb_l+1}^-(u_x,\delta+t_{u_x})=(u_x-\Fb_lm-1,\delta+t_{u_x})$.
Since $\rd(M)\geq 2\Fb_l+1$, the intersection $H^+(\tau^-_{2\Fb_l+1}(u_x,\delta+t_{u_x}))\cap\widehat{M}$ is empty. Note that $\Fb_lm\equiv -s_{l+1} \pmod{n}$ and that $\widehat{M}$ is stable under $\tau^n$. This implies that 
$H^+(u_x+s_{l+1}-1,\delta+t_{u_x})\cap\widehat{M}=\emptyset$. By calculation, $H^+(u_x+s_{l+1}-1,\delta+t_{u_x})$ consists of vertices in the shadowed rectangle including the boundary. Note that 
$$u_x+t_{u_x}+s_{l+1}-\delta^{\vee} =u_x+t_x+s_{l+1}-t_{\min}-s
\leq u_x+t_{u_x}-s.$$
Since we have shown that $y\geq u_x+t_{u_x}-s$, it follows that $y\geq u_x+s_{l+1}$. This implies that $$\begin{aligned}
\delta+\tilde{t}_x+x-y & \leq \delta+\tilde{t}_x+x-u_x-s_{l+1}\\
& =\delta+t_{u_x}-s_{l+1}\\
& = m-s-t_{\min}+t_{u_x}-s_{l+1}\\
& = m-s + (t_{u_x}-t_{\min})-s_{l+1}\\
& <m-s+s-s_{l+1}<m-s.
\end{aligned}$$
Moreover, since $y\leq u_x+t_{u_x}=x+\tilde{t}_x$, we have 
$$\delta+\tilde{t}_x+x-y\geq \delta>s,$$
we conclude that the vertex $(y,\delta+\tilde{t}_x+x-y)$ does not belongs to $\widehat{M}$. Following the construction of $M$-resolution in Subsection \ref{compute-mdim} again, we see that $\pi(y-\delta,\delta+t_y)$, $\pi(u_y-\delta,\delta+t_{u_y})$, $\pi(y,w)$ occur as direct summands of the minimal right $\add(M)$-approximation of $\pi(y-\delta,\delta+\tilde{t}_x+x-y)$, and $\pi(y,\tilde{t}_y)$ is a direct summand of $\Omega_M\pi(y-\delta,\delta+\tilde{t}_x+x-y)=\Omega_M\pi(y,\delta+\tilde{t}_x+x-y)$. 

Altogether, the module $\pi(y,\tilde{t}_y)$ is a direct summand of $\Omega_M^2\pi(x,\tilde{t}_x)$. Since $(z,\tilde{t}_z)\notin\widehat{M}$ for all integers $z$, we see that $\Mdim{M}{\pi(x,\tilde{t}_x)}=\infty$. Hence $\gldim\End_{\Lambda}(M)=\infty$. \end{proof}

\subsection*{Case II: $t_{\min}<s_{l+1}$}
Recall that $t_{\min}=m-\delta-s=\delta^{\vee}-s$, where $s$ is the maximal integer such that $\widehat{M}$ contains a vertex $(x,m-s)$ or $(x,s)$. Moreover $s<s_{l+1}$ since $\rd(M)>2\Fb_l=\rd(s_{l+1})$.  The assumption $t_{\min}<s_{l+1}$ implies that $$m/2>s_{l+1}> t_{\min}=m-\delta-s>m-\delta-s_{l+1}=\begin{cases}
s_1-s_{l+1}, & k_0=1;\\
n+s_1-s_{l+1}, & k_0\geq 2.
\end{cases}$$
This forces
$$l=\begin{cases}
1, & k_0=1;\\
-1, & k_0\geq 2.
\end{cases}$$
This gives rise to an equality which we shall frequently use in later proofs. 
$$\Fb_l\delta^{\vee}+s_{l+1}=n.$$ 
Actually, if $l=-1$, then $\Fb_l=0$ and $s_{l+1}=s_0=n$. The equality holds. If $l=1$, then $\Fb_l=k_1$, $s_{l+1}=s_2$, $k_0=1$ and $\delta^{\vee}=s_1$. The equality just means that $k_1s_1+s_2=n$ which is clearly true. 

\medskip 
Now we define 
$$HM=\bigcup_{i=1}^{2\Fb_l+1}\bigcup_{(x,h)\in\widehat{M}\atop h\neq 0, m}\big(H^-(\tau_i(x,h))\cup H^+(\tau_i^-(x,h))\big).$$
Then $HM\cap \widehat{M}=\emptyset$ since $\rd(M)\geq 2\Fb_l+1$. For each integer $x$, we define
$$\theta(x):=\min\{y\mid y>x, (y,t_x+x-y)\in\widehat{M}\} \mbox{ and } \vartheta_{x}:=\{(z,t_x+x-z)\mid x\leq z<\theta(x)\}.$$
Since $(x+t_x,0)\in\widehat{M}$, one has $\theta(x)\leq x+t_x$. By definition and our assumption, we have $\vartheta_x\cap\widehat{M}=\emptyset$. 
Moreover 
$$\theta(x)+t_{\theta(x)}\leq x+t_x+t_{\theta(x)}\leq x+t_x+\delta^{\vee}.$$

Now for each integer $x$, we consider the sequence
$$x=x_0, x_1,\cdots, x_{\Fb_l+1}$$
where $x_{i+1}=\theta(x_i)$ for all $0\leq i\leq \Fb_l$.
For simplicity, we write $t_i=t_{x_i}$ for all $i=0, 1,\cdots, \Fb_l+1$. Then $(x_{i+1}, t_i+x_i-x_{i+1})\in\widehat{M}$ for all $0\leq i\leq \Fb_l$, and $(x_i,\delta+t_{i})\in\widehat{M}$ for all $0\leq i\leq \Fb_l+1$. For each $1\leq r\leq \Fb_l+1$, consider the following rectangles.
 $$\mu_{2i}=\tau^{-i\delta}H^-(\tau_{2i}(x_{r-i},\delta+t_{r-i})),\quad \mu_{2i+1}=\tau^{-i\delta}H^-(\tau_{2i+1}(x_{r-i},t_{r-i-1}+x_{r-i-1}-x_{r-i}));$$
$$\nu_{2i-1}=\tau^{-i\delta}H^-(\tau_{2i-1}(x_{r-i+1},\delta+t_{r-i+1})),\quad \nu_{2i}=\tau^{-i\delta}H^-(\tau_{2i}(x_{r-i+1},t_{r-i}+x_{r-i}-x_{r-i+1})).$$
By definition $\mu_k$ ($1\leq k\leq 2r$) and $\nu_k$ ($1\leq k\leq 2r+1$) are contained in $HM$, and therefore have no common vertices with $\widehat{M}$. By straightforward calculation, the rectangles  $\mu_k$, $1 \leq k \leq 2r$ are situated adjacent to one another, and similarly for $\nu_k$, $1 \leq k \leq 2r+1$, as is  shown in Figure \ref{fig-Czone} and Figure \ref{fig-Dzone}.
\begin{figure}[h]
\centering
\begin{tikzpicture}[scale=0.3]
\draw [-] (-12,12)--(18,12) node[above] {${\scriptstyle m-1}$};
\draw [-] (-12,0)--(18,0) node[above] {${\scriptstyle 1}$};
\draw[-]
(0,0)--(1,1)--(-10,12)--(-11,11)--cycle;
\node at (-8,9) {${\scriptstyle \mu_{k'}}$};
\node at (-5.5,10) {$\cdots\cdots$};
\draw [-]
(7,0)--(8,1)--(-3,12)--(-4,11)-- cycle;
\node at (5,3) {${\scriptstyle \mu_k}$};
\node at (8,3) {$\cdots\cdots$};
\draw [-]
(13,0)--(14,1)--(3,12)--(2,11)-- cycle;
\node at (11,3) {${\scriptstyle \mu_1}$};
\draw [-,fill=gray!30,opacity=0.8] (0,0) node[dot]{} node[below] {${\scriptstyle (\underline{c}_r^{k'},1)}$} --(-7.5,7.5)--(-3,12) node[dot]{} node[above] {${\scriptstyle (\bar{c}_r^k,m-1)}$} --(4.5,4.5)--cycle;
\node at (-1.5,6) {$ C_{x,r}^{[k,k']}$};
\end{tikzpicture}
\caption{}
\label{fig-Czone}
\end{figure}
\begin{figure}[h]
\centering
\begin{tikzpicture}[scale=0.3]
\draw [-] (13,12)--(43,12) node[above] {${\scriptstyle m-1}$};
\draw [-] (13,0)--(43,0) node[above] {${\scriptstyle 1}$};
\draw [-]
(31,0) --(30,1)--(41,12)--(42,11)--cycle;
\node at (39,9) {${\scriptstyle \nu_1}$};
\node at (36,9) {$\cdots\cdots$};
\node at (33,9) {${\scriptstyle \nu_k}$};
\draw [-]
(25,0) --(24,1)--(35,12)--(36,11)--cycle;
\node at (20.5,3) {${\scriptstyle \nu_{k'}}$};
\node at (23.5,3) {$\cdots\cdots$};
\draw [-]
(18,0) --(17,1)--(28,12)--(29,11)--cycle;
\draw[-,fill=gray!30,opacity=0.8] (28,12) node[dot]{} node[above] {${\scriptstyle (\bar{d}_r^{k'},m-1)}$} --(32.5,7.5)--(25,0) node[dot]{} node[below]{${\scriptstyle (\underline{d}_r^k,1)}$}--(20.5,4.5)-- cycle;
\node at (26.5,6) {$D_{x,r}^{[k,k']}$};
\end{tikzpicture}
\caption{}
\label{fig-Dzone}
\end{figure}

Denote the gray rectangles including their boundaries by $C_{x, r}^{[k,k']}$ ($1\leq k\leq k'\leq 2r$) and $D_{x, r}^{[k,k']}$ ($1\leq k\leq k'\leq 2r+1$) respectively. That is, $C_{x, r}^{[k,k']}$ is determined by $(\bar{c}_r^{k}, m-1)$ and $(\underline{c}^{k'}_r,1)$ and
$D_{x, r}^{[k,k']}$ is determined by $(\bar{d}^{k'}_1, m-1)$ and $(\underline{d}^k_r,1)$. Note that for any $1\leq r\leq \Fb_l+1$, we have $\tau^{jn}C_{x, r}^{[k,k']}\cap\widehat{M}=\emptyset$ and $\tau^{jn}D_{x, r}^{[k,k']}\cap\widehat{M}=\emptyset$ for all integers $j$ and all $1\leq k\leq k'\leq 2\Fb_l+1$. Table \ref{tab-CD-vertex} describes the corner vertices of 
$C_{x,r}^{[k,k']}$ and $D_{x,r}^{[k,k']}$.  
\begin{table}[h]
\centering
\begin{tabular}{c|c|c}
\hline
k&$\underline{c}_r^{k}$ & $\bar{c}_r^k$\\
\hline
$2i$  & $x_{r-i}+i \delta^{\vee}$     & $x_{r-i}+t_{r-i}+(i-1)\delta^{\vee}+1$ \\
\hline
$2i+1$ & $x_{r-i-1}+t_{r-i-1}+i\delta^{\vee}$     & $x_{r-i}+i\delta^{\vee}+1$\\
\hline
\end{tabular}

\medskip 
\begin{tabular}{c|c|c}
\hline
$k$ & $\underline{d}_r^k$ & $\bar{d}_r^{k}$\\
\hline
  $2i$  & $x_{r-i+1}+i\delta^{\vee}$ & $x_{r-i}+t_{r-i}+(i-1)\delta^{\vee}-\delta+1$   \\
  \hline 
   $2i+1$  & $x_{r-i}+t_{r-i}+i\delta^{\vee}$ &$x_{r-i}+i\delta^{\vee}-\delta+1$\\
\hline
\end{tabular}
\caption{}
\label{tab-CD-vertex}
\end{table}

For each integer $x$ and $0\leq r\leq \Fb_l$, we define $\Delta_x^r$ and $\nabla_x^r$ to be the vertices in the triangles (including the boundaries) listed in Table \ref{table-Delta-nabla}.
\begin{table}[h]
\centering
\begin{tabular}{c|c|c}
& $\Delta_x^r$ &$\nabla_x^r$\\
\hline
\multirow{-3}*{$r=0$} & \begin{tikzpicture}
\draw (1.5,0) node[right] {${\scriptstyle m-s_{l+1}+1}$} --(-1,0) --(0,1) node[dot]{} node[above] {${\scriptstyle (x+1,\;\delta+\theta(x)-x-2)}$}   -- (1,0);
\end{tikzpicture}
& \begin{tikzpicture}
\draw (1.5,0) node[right] {${\scriptstyle s_{l+1}-1}$} --(-1,0) --(0,-1) node[dot]{} node[below] {${\scriptstyle (\theta(x),\;x-\theta(x)+\delta^{\vee}+1)}$}  -- (1,0);
\end{tikzpicture}\\
\hline
\multirow{-3}*{$1\leq r\leq \Fb_l$} & \begin{tikzpicture}
\draw (1.5,0) node[right] {${\scriptstyle m-s_{l+1}+1}$} --(-1,0) --(0,1) node[dot]{} node[above] {${\scriptstyle (x+r\delta^{\vee}+1,\;\delta+t_{\theta(x)}+\theta(x)-x-\delta^{\vee}-1)}$}   -- (1,0);
\end{tikzpicture}
& \begin{tikzpicture}
\draw (1.5,0) node[right] {${\scriptstyle s_{l+1}-1}$} --(-1,0) --(0,-1) node[dot]{} node[below] {${\scriptstyle (t_{\theta(x)}+\theta(x)+(r-1)\delta^{\vee},\;x+2\delta^{\vee}-(t_{\theta(x)}+\theta(x)+1)}$}  -- (1,0);
\end{tikzpicture}\\
\hline 
\end{tabular}
\caption{}
\label{table-Delta-nabla}
\end{table}
Let $S_x^r=\Delta_x^r\cup\nabla_x^r$. 
The triangles fit into the rectangles as shown in Figure \ref{fig-Oxi}:
\begin{figure}[!h]
\centering
\begin{tikzpicture}[scale=0.3]
\draw[-] (-15,0) -- (30,0);
\draw[-] (-15,15) -- (30,15);
\draw [-] (0,0) node[dot]{} node[below] {${\scriptstyle (t_{\theta(x)}+\theta(x)+(r-1)\delta^{\vee},\; 1)}$}--(3,3)--(0,6)--(-3,3)--cycle;
\node at (0,3) {${\scriptstyle {T_x'}^{r}}$};
\draw [-] (-12,12) --(-9,15) node[dot]{} node[above] {${\scriptstyle (x+r\delta^{\vee}+1,\; m-1)}$}--(-6,12)--(-9,9)--cycle;
\node at (-9,12) {${\scriptstyle {T_x}^{r}}$};
\draw [-] (-8.8,8.8)--(-5.8,11.8)--(-0.2,6.2)--(-3.2,3.2)--cycle;
\draw[-] (-7.6,10)--(-4,10);
\node at (-5.8,10.7) {${\scriptstyle \Delta_x^r}$};
\draw[-] (-1.4,5)--(-5,5);
\node at (-3.2,4.3) {${\scriptstyle \nabla_x^r}$};
\node at (-12,7) {${\scriptstyle O_x^r(r>0):}$};
\node at (10,7) {${\scriptstyle O_x^0:}$};
\draw[-] (12,15) node[dot]{} node[above]{${\scriptstyle (x+1,\; m-1)}$} -- (9,12) node[dot]{} node[left] {${\scriptstyle (\theta(x),\; x+m-\theta(x))}$} -- (10.5,10.5) -- (13.5,13.5) --cycle;
\node at (11.2,12.7) {${\scriptstyle T_x^0}$};
\draw[-,dashed] (12.2,15) -- (27.2,0) node[dot]{} node[below] {${\scriptstyle (x,\;1)}$};
\draw[-,dashed] (13.7,13.5) node[dot]{} node[right]{${\scriptstyle (x,\;\delta+t_x)}$} -- (10.55,10.35) node[dot] {} node[left] {${\scriptstyle v}$};
\draw[-] (12.1,8.9) -- (15.2,12) -- (13.8,13.4) -- (10.7,10.3)--cycle;
\node at (12.9,11.1) {${\scriptstyle U_x^0}$};
\draw [-] (12.2,8.8)--(15.2,11.8)--(20.8,6.2)--(17.8,3.2)--cycle;
\draw[-] (13.4,10)--(17,10);
\node at (15.2,10.7) {${\scriptstyle \Delta_x^0}$};
\draw[-] (19.6,5)--(16,5);
\node at (17.8,4.3) {${\scriptstyle \nabla_x^0}$};
\draw[-] (17.9,3.1) -- (21,6.2) -- (22.5,4.7) -- (19.4,1.6)--cycle;
\node at (20.1,3.8) {${\scriptstyle {T_x'}^0}$};
\draw[-,dashed] (22.6,4.6) node[dot]{} node[right]{${\scriptstyle (x,\; t_x)}$} -- (19.55,1.55) node[dot]{} node[left] {${\scriptstyle (\theta(x),\;t_x+x-\theta(x))}$} -- (21.1,0) node[dot]{} node[below] {${\scriptstyle (\theta(x),\; 1)}$};
\draw[-] (21.3,0) -- (24.3,3) -- (22.75,4.45) -- (19.85,1.45) -- cycle;
\node at (22,2.2) {${\scriptstyle {U'_x}^0}$};
\end{tikzpicture}
\caption{}\label{fig-Oxi}
\end{figure}

It is straightforward to check that 
$$T_x^0\subseteq H^-(\tau(x,\delta+t_x)), \quad {T'_x}^0\subseteq \tau^{-\delta}H^-(\tau(x,\delta+t_x)),$$
$${U'_x}^0\subseteq H^+(\tau^-(\theta(x),t_x+x-\theta(x))),\quad U_x^0\subseteq \tau^{\delta}H^+(\tau^-(\theta(x),t_x+x-\theta(x))),$$
$$T_x^r\subseteq \tau^{\delta}D_{x,r}^{[2r-1,2r+1]},\quad {T'_x}^r\subseteq D_{x,r}^{[2r-1,2r+1]},\quad  (r>0).$$
\begin{Prop}
\label{prop-case2}
Keep the notations above. Let $x_0,x_1,\cdots, x_{\Fb_l+1}$ be integers such that $x_{i+1}=\theta(x_i)$ for all $0\leq i\leq \Fb_l+1$. Write $t_i$ for $t_{x_i}$ for all $0\leq i\leq \Fb_l+2$.  Assume that  $S_{x_0}^r\cap\widehat{M}=\emptyset$ for all $0\leq r\leq \Fb_l$. Then the following hold.
\begin{enumerate}[label={\rm (\arabic*)}]
    \item $x_0+i\delta^{\vee}<x_i+t_i\leq x_0+(i+1)\delta^{\vee}$ and  $x_i>x_0+(i-1)\delta^{\vee}$ for all $0\leq i\leq \Fb_l+1$;
    \item  $\Omega_{M}^{2i}\pi(x_0,t_0)=\pi(x_i,t_{i})$ for all $1\leq i\leq \Fb_l$.
   \item Suppose that  $x_1<t_{0}+x_0+s_{l+1}-\delta^{\vee}$. Then $$\Omega^{2}_{M}\pi(x_{\Fb_l},t_{\Fb_l})=\pi(x_{\Fb_l+1},t_{\Fb_l+1}).$$ 
   Moreover, if $t_{1}\geq s_{l+1}$, then $\pi(x_1,t_1)$ is $M$-periodic; if $t_1<s_{l+1}$, then $S_{x_1}^r\cap\widehat{M}=\emptyset$ for all $0\leq r\leq \Fb_l$.
   
   \item Suppose that $x_1\geq t_{0}+x_0+s_{l+1}-\delta^{\vee}$ and $t_0<s_{l+1}$.  Let 
  $$\tilde{t}_1=\begin{cases}
  \delta+t_{0}+x_0-x_1, & l=-1\\
  t_1, & l=1.
  \end{cases}$$ 
  Then $\pi(x_1,\tilde {t}_{1})$ is $M$-periodic.
\end{enumerate}
\end{Prop}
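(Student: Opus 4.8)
The plan is to prove the four assertions in the order given, reading off each syzygy $\Omega_M$ from the combinatorial recipe of Subsection~\ref{compute-mdim} and using throughout the emptiness facts already established — $HM\cap\widehat{M}=\emptyset$, $\tau^{jn}C_{x_0,r}^{[k,k']}\cap\widehat{M}=\emptyset$ and $\tau^{jn}D_{x_0,r}^{[k,k']}\cap\widehat{M}=\emptyset$ for all relevant indices, and the hypothesis $S_{x_0}^r\cap\widehat{M}=\emptyset$ — together with the identity $\Fb_l\delta^{\vee}+s_{l+1}=n$ proved just above and the $\tau^n$-periodicity $\pi(x,t)=\pi(x+n,t)$.

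\textbf{Part (1).} I would argue by induction on $i$. The inclusion $x_0<x_{i+1}=\theta(x_i)\le x_i+t_i$ is immediate from the definition of $\theta$ and from $(x_i+t_i,0)\in\widehat{M}$; since also $t_{i+1}=t_{\theta(x_i)}\le\delta^{\vee}$ (because $(x,m)\in\widehat{M}$ in every column $x$), this gives $x_{i+1}+t_{i+1}\le x_i+t_i+\delta^{\vee}$, and the upper bound $x_i+t_i\le x_0+(i+1)\delta^{\vee}$ follows inductively. For the lower bound $x_0+i\delta^{\vee}<x_i+t_i$ the point is that a column attaining it too soon would force a vertex of $\widehat{M}$ into one of the forbidden triangles $\nabla_{x_0}^r$ (equivalently into a $\tau^{jn}$-translate of the rectangles $D_{x_0,r}^{[\cdot,\cdot]}$), which is excluded by hypothesis; the corner formulas of Tables~\ref{tab-CD-vertex} and~\ref{table-Delta-nabla} are used to pin down the obstruction. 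The remaining inequality $x_i>x_0+(i-1)\delta^{\vee}$ is then formal from $x_i+t_i>x_0+i\delta^{\vee}$ and $t_i\le\delta^{\vee}$.

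\textbf{Part (2).} This is an induction on $i$ whose step establishes $\Omega_M^2\pi(x_i,t_i)=\pi(x_{i+1},t_{i+1})$. By $\delta$-freeness $(x_i,t_i)=(x_i,t_{x_i})\notin\widehat{M}$, and by the minimality defining $t_{x_i}$ the first vertex of $\widehat{M}$ above $(x_i,t_i)$ in its column is $(x_i,\delta+t_i)$, so the rectangle $E_{x_i}^{t_i}$ of Subsection~\ref{compute-mdim} has corners $(x_i,t_i)$, $(x_i+t_i,0)$, $(x_i,\delta+t_i)$, $(x_i+t_i,\delta)$. Running the selection procedure inside $E_{x_i}^{t_i}$, the triangles $\Delta_{x_0}^r,\nabla_{x_0}^r$ and their $\tau^{jn}$-translates (all disjoint from $\widehat{M}$) guarantee that no vertex of $\widehat{M}$ lying closer to the left edge of $E_{x_i}^{t_i}$, or lower in it, than $(x_{i+1},t_i+x_i-x_{i+1})$ is ever selected, so the approximation is the standard one and $\Omega_M\pi(x_i,t_i)=\pi(x_{i+1},\delta+t_i+x_i-x_{i+1})$. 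A second application of the recipe to $\pi(x_{i+1},\delta+t_i+x_i-x_{i+1})$, this time using the emptiness of the active translates of $C_{x_0,r}^{[k,k']}$ and $D_{x_0,r}^{[k,k']}$ (Figures~\ref{fig-Czone} and~\ref{fig-Dzone}) to read off the approximation, returns $\pi(x_{i+1},t_{i+1})$; Figure~\ref{fig-Oxi} is the picture that organizes this. Iterating from $i=0$ to $\Fb_l-1$ proves (2).

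\textbf{Parts (3) and (4), and the main difficulty.} For (3) I would push the computation of (2) one step further, to the rectangle at level $i=\Fb_l$. When $x_1<t_0+x_0+s_{l+1}-\delta^{\vee}$ the analysis of (2) still applies there and yields $\Omega_M^2\pi(x_{\Fb_l},t_{\Fb_l})=\pi(x_{\Fb_l+1},t_{\Fb_l+1})$; then, tracking the column along $x_0\mapsto\cdots\mapsto x_{\Fb_l+1}$ and the ordinate through (1)--(2) and invoking $\Fb_l\delta^{\vee}+s_{l+1}=n$ together with $\pi(x,t)=\pi(x+n,t)$, one finds that if $t_1\ge s_{l+1}$ the module $\pi(x_1,t_1)$ reappears (up to the $\tau^n$-shift) as a summand of $\Omega_M^{2\Fb_l}\pi(x_1,t_1)$, hence is $M$-periodic, while if $t_1<s_{l+1}$ the translated triangles $S_{x_1}^r$ land in the same forbidden zone, so $S_{x_1}^r\cap\widehat{M}=\emptyset$ — precisely the statement allowing the argument to be restarted at $x_1$. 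For (4), the hypothesis $x_1\ge t_0+x_0+s_{l+1}-\delta^{\vee}$ together with $t_0<s_{l+1}$ changes which vertex is selected at the top of the first rectangle, so that the modified ordinate $\tilde t_1$ appears (the two formulas reflecting $l=-1$, where $\delta^{\vee}=n$, versus $l=1$, where $k_0=1$ and $\delta^{\vee}=s_1$); repeating the $\Omega_M^2$-computation from this modified start and again using $\Fb_l\delta^{\vee}+s_{l+1}=n$ shows $\pi(x_1,\tilde t_1)$ returns as a summand of $\Omega_M^{2\Fb_l}\pi(x_1,\tilde t_1)$ modulo $\tau^n$, hence is $M$-periodic. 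The hard part throughout (2)--(4) is the bookkeeping: verifying at each of the $\sim 2\Fb_l$ applications of the syzygy recipe that the active rectangle meets $\widehat{M}$ only along its intended edges, i.e.\ that the forbidden sets $HM$, the $\tau^{jn}$-translates of $C_{x_0,r}^{[k,k']}$ and $D_{x_0,r}^{[k,k']}$, and the triangles $\Delta_{x_0}^r,\nabla_{x_0}^r$ exactly cover its bad region, with the corner data of Table~\ref{tab-CD-vertex} kept consistent across the $\tau^{i\delta}$-shifts; the split $l=-1$ versus $l=1$ (equivalently $k_0\ge2$ versus $k_0=1$) in (3) and (4) doubles this verification.
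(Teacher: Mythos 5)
Your proposal correctly identifies the paper's strategy (inductively reading off $\Omega_M$ and $\Omega_M^2$ from the combinatorial recipe, using the forbidden regions $HM$, $C_{x_0,r}^{[k,k']}$, $D_{x_0,r}^{[k,k']}$, $\Delta_{x_0}^r$, $\nabla_{x_0}^r$ and the identity $\Fb_l\delta^{\vee}+s_{l+1}=n$), but it is a plan rather than a proof: the entire mathematical content of this proposition lies in the coordinate computations you explicitly defer as ``bookkeeping.'' At each syzygy step one must exhibit the specific vertices bounding the regions that could spoil the approximation (the vertices $A$ through $H$, $u$, $v$, $w$ in the paper's figures), prove by explicit inequalities --- using part (1), $t_i\le\delta^{\vee}$, and congruences such as $\Fb_lm\equiv -s_{l+1}\pmod n$ --- that each such region is contained in a set already known to miss $\widehat{M}$, and only then conclude the approximation is the asserted standard one. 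Without these verifications nothing is proved; asserting that the forbidden sets ``exactly cover the bad region'' is the claim, not an argument for it. Two further points are not merely omissions but errors. First, your periodicity exponents are wrong: in (3) the paper obtains $\pi(x_1,t_1)\in\add\bigl(\Omega_M^{2(\Fb_l+1)}\pi(x_1,t_1)\bigr)$, and in (4) the periods are $2$ (for $l=-1$) and $2\Fb_1+1$ (for $l=1$, an odd period arising from an $\Omega_M^3$ computation); your claimed period $2\Fb_l$ degenerates to $\Omega_M^0$ when $l=-1$ (where $\Fb_l=0$), making the statement vacuous exactly in the case $k_0\ge 2$. Second, in (3) with $t_1\ge s_{l+1}$ the recurrence of $\pi(x_1,t_1)$ is not a formal consequence of (1)--(2): it requires a separate inductive localization of $x_i$ and $h_i$ in windows of width $h_0-x_1$, obtained by applying the hypothesis $\rd(M)\ge 2\Fb_l+1$ to the sets $H^+\bigl(\tau^-_{2(\Fb_1-i+2)}(x_1,h_0-x_1)\bigr)$ and $H^+\bigl(\tau^-_{2(\Fb_1-i+2)+1}(x_1,h_0-x_1)\bigr)$; this use of the rigidity hypothesis at intermediate (not just maximal) indices is absent from your outline.

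A smaller inaccuracy: in (1) the lower bound for $i=1$ does not come from the triangles $\nabla_{x_0}^r$ but from $\delta$-freeness --- if $(x_1,\delta+t_0+x_0-x_1)\in\widehat{M}$ then $(x_1,t_1)\in\widehat{M}$, a contradiction --- while for $i>1$ it comes from the rectangles $O_{x_0}^{i-1}$ and $C_{x_0,i}^{[2(i-1),2i]}$. Likewise, in (3) with $t_1<s_{l+1}$, the containment $S_{x_1}^r\cap\widehat{M}=\emptyset$ is proved by comparing the extreme vertices $T_r$, $L_r$ of $\Delta_{x_1}^r$, $\nabla_{x_1}^r$ with specific corners of $O_{x_0}^{r+1}$, $C_{x_0,r+1}^{[1,2(r+1)]}$ and, for $r=\Fb_l$, of $C_{x_0,\Fb_l+1}^{[2\Fb_l,2\Fb_l+1]}$ and $H^+(x_0+n-1,t_0+\delta)$; these comparisons, split by $l=\pm1$, are again the substance of the argument and cannot be waved through.
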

\begin{proof}
For simplicity, we write $h_i=x_i+t_i$ for all $i$ in the proof.

$(1)$. We use induction on $i$. The $i=0$ case is trivial. Clearly $x_1>x_0$. In the picture of $O_{x_0}^0$, the vertex $v=(x_1,\delta+t_0+x_0-x_1)$ cannot be in $\widehat{M}$, otherwise, $\delta+t_1=\delta+t_0+x_0-x_1$, and thus $(x_1,t_1)=(x_1,t_0+x_0-x_1)\in\widehat{M}$, contradicting to our assumption. Hence $\delta+t_1>x_0+m-x_1$, that is, $t_1+x_1>x_0+\delta^{\vee}$. Note that $\delta+t_i\leq m$ and hence $t_i\leq\delta^{\vee}$ for all $i$. Together with the fact $x_1\leq t_0+x_0$, we deduce that 
$$t_1+x_1\leq t_0+x_0+t_1\leq x_0+2\delta^{\vee}. $$
This proves $(1)$ in case $i=1$. Now assume that $i>1$. By induction, we have 
$$x_i>x_{i-1}>x_0+(i-1)\delta^{\vee},\quad x_i\leq t_{i-1}+x_{i-1}\leq x_0+i\delta^{\vee}. $$
Hence $x_0+(i-1)\delta^{\vee}+1\leq x_i\leq x_0+i\delta^{\vee}$. From the picture of $O_{x_0}^{i-1}$ and $C_{x_0,i}^{[2(i-1),2i]}$  we deduce that $$\delta+t_i>x_0+(i-1)\delta^{\vee}+m-x_i.$$ 
Hence $x_0+i\delta^{\vee}<t_i+x_i\leq t_i+x_0+i\delta^{\vee}\leq x_0+(i+1)\delta^{\vee}$. 

(2).  We first consider the case $i=1$. By the picture of $O_{x_0}^0$ and the assumption $S_{x_0}^0\cap\widehat{M}=\emptyset$, there is a standard $M$-exact sequence  $$0\lra \pi(x_1,\delta+h_0-x_1)\lra \pi(x_0,\delta+t_0)\oplus \pi(x_1,h_0-x_1)\lra \pi(x_0,t_0)\lra 0$$ such that the middle term belongs to $\add(M)$. Hence $\Omega_{M}\pi(x_0,t_0)=\pi(x_1,\delta+h_0-x_1)$. We can assume that $l\neq -1$, otherwise, $\Fb_l=0$ and there is nothing to prove. In Figure \ref{fig-omega(x0t0)}, the two unlabelled shadowed rectangles are $H^+(\tau^-(x_0+\delta,t_0+\delta))$ and $H^-(\tau(x_0,t_0+\delta))$. All the shadowed rectangles contain no vertices in $\widehat{M}$. A straightforward calculation shows that the vertex $A=(h_0+\delta-\delta^{\vee},\delta^{\vee})$ which is higher than the line $(-,s_{l+1}-1)$. The lowest vertex of the area $v$ is $(x_1+\delta-1,x_0-x_1+\delta^{\vee}+1)$, showing that $v$ is contained in $\tau^{\delta}\nabla_{x_0}^0$. The highest vertex of the area $w$ is $(x_0+\delta^{\vee}+1,\delta+h_1-x_0-\delta^{\vee}-2)$. This means that $w\subseteq \Delta_{x_0}^1$.  Thus neither $v$ nor $w$ contains vertices in $\widehat{M}$. 
Altogether, by the construction in Subsection \ref{compute-mdim}, there is a standard $M$-exact sequence
$$0\lra \pi(x_1+\delta,t_1)\lra\pi(x_1+\delta,h_0-x_1)\oplus \pi(x_1,t_1+\delta)\lra \pi(x_1,\delta+h_0-x_1)\lra 0$$
with middle term in $\add(M)$. Hence
$$\Omega_{M}^2\pi(x_0,t_0)=\pi(x_1+\delta,t_1)=\pi(x_1,t_1).$$
\begin{figure}[!h]
\centering
\begin{tikzpicture}[scale=0.06]
\draw[-] (-50,1)--(200,1) \snode{right}{1};
\draw[-] (-50,114)--(200,114) \snode{above}{m-1};
\draw[-,dashed] (-50,81)--(200,81)\snode{above}{m-s_{l+1}+1}; 
\draw[-,dashed] (-50,34)--(200,34) \snode{above}{s_{l+1}-1};
\draw[-] (25,10) -- (40,25) node[dot]{}\snode{right}{(x_0+\delta,t_0)};
\draw[-,dashed] (190,25) node[dot]{}\snode{right}{(x_0,t_0)}--(115,100) node[dot]{} \snode{right}{(x_0,t_0+\delta)} -- (25,10) node[dot]{}\snode{right}{(x_1+\delta,h_0-x_1)} -- (2,33) node[dot]{}\snode{above left}{(x_1+\delta,t_1)}-- (77,108) node[dot]{} \snode{right}{(x_1,t_1+\delta)} --(100,85) node[dot]{}\snode{right}{(x_1,\delta+h_0-x_1)}-- (175,10) node[dot]{}\snode{right}{(x_1,h_0-x_1)}-- cycle;
\draw[-,fill=gray!30,opacity=0.6] (69,114)node[dot]{}\snode{above}{(x_1+1,m-1)}--(163,20)--(144,1)node[dot]{}\snode{below}{(x_0+\delta^{\vee},1)}--(50,95)--cycle;
\node at (120,42) {${\scriptstyle C_{x_0,1}^{[1,2]}}$};
\draw[-,fill=gray!30,opacity=0.6] (113,100)--(99,114)--(0,15)--(14,1)--cycle;
\draw[-,fill=gray!30,opacity=0.6] (-33,100) node[dot]{}\snode{right}{\tau^-(x_0+\delta, \delta+t_0)}--(-19,114)-- (54,39) node[dot]{} \snode{right}{A} --(80,15)--(66,1)--cycle;
\draw[-,fill=gray!30] (3.5,33) -- (4.5,34) -- (17.5,34) -- (10,26.5) -- cycle;
\node at (10,30) {${\scriptstyle v}$};
\draw[-,fill=gray!30] (51,81) -- (63,81) -- (57,87) -- cycle;
\node at (57,84) {${\scriptstyle w}$};
\end{tikzpicture}
\caption{}
\label{fig-omega(x0t0)}
\end{figure}
Now assume that $1<i\leq \Fb_l+1$. By induction, we need to prove that $\Omega_{M}^2\pi(x_{i-1},t_{i-1})=\pi(x_i,t_i)$. By definition, except for $C_{x_0,i}^{[1,2i]}$, the other shadowed rectangles in Figure \ref{fig-omega(xiti)} contain no vertices in $\widehat{M}$, and $C_{x_0,i}^{[1,2i]}\cap\widehat{M}=\emptyset$ when $i\leq \Fb_l$. Note that $v\subseteq H^-(\tau(x_{i-1}-\delta,\delta+t_{i-1}))$.
\begin{figure}[!h]
\centering 
\begin{tikzpicture}[scale=0.06]
\draw[-] (-50,1)--(200,1) \snode{right}{1};
\draw[-] (-50,114)--(200,114) \snode{above}{m-1};
\draw[-,dashed] (-50,76)--(200,76)\snode{above}{m-s_{l+1}+1}; 
\draw[-,dashed] (-50,39)--(200,39) \snode{above}{s_{l+1}-1};
\draw[-] (20,10) -- (40,30) node[dot]{}\snode{right}{(x_{i-1}+\delta,t_{i-1})};
\draw[-,dashed] (190,30) node[dot]{}\snode{right}{(x_{i-1},t_{i-1})}--(115,105) node[dot]{} \snode{right}{(x_{i-1},t_{i-1}+\delta)} -- (20,10) node[dot]{}\snode{right}{(x_i+\delta,h_{i-1}-x_i)} -- (-7,37) node[dot]{}\snode{left}{(x_i+\delta,t_i)}-- (68,112) node[dot]{} -- (170,10) node[dot]{}\snode{right}{(x_i,h_{i-1}-x_i)} --cycle; 
\draw[-,fill=gray!30,opacity=0.6] (104,114) node[dot]{}\snode{above}{A} -- (94,104)--(197,1) node[dot]{}\snode{below}{D} --(207,11) --cycle;
\node at (147,61) {${\scriptstyle C_{x_0,i-1}^{[1,2(i-1)]}}$};
\draw[-,fill=gray!30,opacity=0.6](82,114) node[dot]{}\snode{above}{B}--(173.0,23.0)--(151,1)node[dot]{}\snode{below}{E}--(60.0,92.0)--cycle;
\node at (123,61) {${\scriptstyle O_{x_0}^{i-1}}$};
\draw[-,fill=gray!30,opacity=0.6](64,114) node[dot]{}\snode{above}{C}--(160.0,18.0)--(143,1) node[dot]{}\snode{below}{F}--(77,67) node[dot]{}\snode{below}{H}--(47.0,97.0)--cycle;
\node at (105,61) {${\scriptstyle C_{x_0,i}^{[1,2i]}}$};
\draw[-,fill=gray!30,opacity=0.6](82,114)--(102.0,94.0)--(9,1)--(-11.0,21.0)--cycle;
\node at (55,61) {${\scriptstyle \tau^{\delta}D_{x_0,i-1}^{[1,2i-1]}}$};
\draw[-,fill=gray!30,opacity=0.6](-33,105) node[dot]{}\snode{right}{\tau^{-}(x_{i-1}+\delta,\delta+t_{i-1})}--(-24,114)--(50,40) node[dot]{}\snode{right}{G}--(80,10)--(71,1)--cycle;
\draw[-,fill=gray!30,opacity=0.6] (156,4)--(174,22)--(177,19)--(159,1)--cycle;
\node at (165,10) {${\scriptstyle v}$};
\draw[-,fill=gray!30] (50,92)--(54,88)--(42,76)--(34,76)--cycle;
\node at (46,84) {${\scriptstyle w}$};
\draw[-,fill=gray!30] (-5,37)--(-3,39)--(5,39)--(-1,33)--cycle;
\node at (-1,36.5)  {${\scriptstyle u}$};
\node at (78,110) {${\scriptstyle (x_i,\delta+t_i)}$};
\end{tikzpicture}
\caption{}
\label{fig-omega(xiti)}
\end{figure}
We calculate the vertices as follows 
$$A=(x_{i-1}+1,m-1),\quad B=(x_0+(i-1)\delta^{\vee}+1,m-1),\quad C=(x_i+1,m-1),$$
$$D=(x_0+(i-1)\delta^{\vee},1),\quad E=(h_1+(i-2)\delta^{\vee},1),\quad F=(x_0+i\delta^{\vee},1),$$
$$G=(h_{i-1}+\delta-\delta^{\vee},\delta^{\vee}),\quad H=(x_0+i\delta^{\vee},h_{i-1}+\delta-x_0-i\delta^{\vee}).$$
We can deduce that the vertex $H$ below the line $(-, m-s_{l+1}+1)$ and $G$ is above the line $(-, s_{l+1}-1)$. This is obvious for $G$ since $\delta^{\vee}\geq s_{l+1}$. For the vertex $H$, by (1), we have 
$$h_{i-1}+\delta-x_0-i\delta^{\vee}\leq \delta\leq m-s_{l+1},$$
as desired. We claim that $w\subseteq\Delta_{x_0}^i$. Actually, suppose that $(a,t)$ and $(b,t')$ are highest vertices of $w$ and $\Delta_{x_0}^i$, respectively. If $t\leq m-s_{l+1}$, then $w=\emptyset$. So $w\subseteq\Delta_{x_0}^i$ is equivalent to 
$$a\geq b\mbox{ and }a+t\leq b+t'.$$
Respectively, the highest vertices of $w$ and $\Delta_{x_0}^i$ are $$(x_0+i\delta^{\vee}+1,h_i+\delta-x_0-i\delta^{\vee}-2)\mbox{ and } (x_0+i\delta^{\vee}+1,\delta+h_1-x_0-\delta^{\vee}-1).$$
One can check the above condition easily by using (1). Similarly, the lowest vertex of $u$ is 
$$(x_i+\delta-1,x_0+i\delta^{\vee}-x_i+2).$$
Comparing with the lowest vertex $(h_1+\delta+(i-2)\delta^{\vee}, x_0+2\delta^{\vee}-h_1+1)$ of $\tau^{\delta}\nabla_{x_0}^{i-1}$, one can deduce that $u\subseteq\tau^{\delta}\nabla_{x_0}^{i-1}$. Now the construction in Subsection \ref{compute-mdim} shows that 
$$\omega_{\widehat{M}}(x_{i-1},t_{i-1})=(x_i,\delta+h_{i-1}-x_i)\mbox{ when }i\leq\Fb_l+1 \mbox{ and} $$
$$ \omega_{\widehat{M}}(x_i,h_{i-1}-x_i+\delta)=(x_i+\delta,t_i) \mbox{ when }i\leq \Fb_l.$$
Hence $\Omega_{M}^2\pi(x_{i-1},t_{i-1})=\pi(x_i+\delta,t_i)=\pi(x_i,t_i)$ when $i\leq \Fb_l$, and 
$$\Omega_{M}\pi(x_{\Fb_l},t_{\Fb_l})=\pi(x_{\Fb_l+1},\delta+h_{\Fb_l}-x_{\Fb_l+1}).$$

$(3)$. We shall prove that if $x_1<t_{0}+x_0+s_{l+1}-\delta^{\vee}$, then 
$$\omega_{\widehat{M}}(x_{\Fb_l+1},\delta+h_{\Fb_l}-x_{\Fb_l+1})=(x_{\Fb_l+1}+\delta,t_{\Fb_l+1}).$$
In Figure \ref{fig-omegaFl}, the vertex $A$ is $(x_{\Fb_l+1},\delta+h_{\Fb_l}-x_{\Fb_l+1})$. The two unlabelled shadowed rectangles are $H^+(\tau^{-1}(x_0+n,t_0+\delta))$ and $H^-(\tau(x_1+n,h_0-x_1))$ respectively. By our assumption, $O_{x_0}^0$ contains no vertices in $\widehat{M}$, thus all shadowed rectangles have no common vertices with $\widehat{M}$. We list the vertices as follows. 
\begin{figure}[!h]
    \centering 
\begin{tikzpicture}[scale=0.04]
\draw[-,dashed] (46.0,35)node[dot]{}node[right] {${\scriptstyle (x_{\Fb_l},t_{\Fb_l})}$}--++(-110,110)node[dot]{}node[right] {${\scriptstyle (x_{\Fb_l},t_{\Fb_l}+\delta)}$}--++(-30,-30)node[dot]{}node[right] {${\scriptstyle A}$}--++(-110,-110)node[dot]{}node[left] {${\scriptstyle B}$}--++(-33,33)node[dot]{}node[left] {${\scriptstyle (x_{\Fb_l+1}+\delta,t_{\Fb_l+1})}$}--++(110,110)node[dot]{}node[xshift=37,yshift=-5] {${\scriptstyle (x_{\Fb_l+1},t_{\Fb_l+1}+\delta)}$}--++(143,-143)--cycle;\draw[-] (-292.5,149)--++(390.0,0)node[above] {${\scriptstyle m-1}$};\draw[-,dashed] (-292.5,29)--++(390.0,0)node[above] {${\scriptstyle s_{l+1}-1}$};\draw[-,dashed] (-292.5,121)--++(390.0,0)node[above] {${\scriptstyle m-s_{l+1}+1}$};\draw[-] (-292.5,1)--++(390.0,0)node[above] {${\scriptstyle 1}$};\draw[-,fill=gray!30,opacity=0.8] (-158.0,135)--(-144.0,149)--(-10.0,15)--(-24.0,1)--cycle;\draw[-,fill=gray!30,opacity=0.8] (-130.0,149)--(1.0,18.0)--(-16.0,1)--(-147.0,132.0)--cycle;\fill (-130.0,149)node[dot]{}node[above] {${\scriptstyle C_1}$};\fill (-16.0,1)node[dot]{}node[below] {${\scriptstyle C_2}$};\fill (-144.0,149)node[dot]{}node[above] {${\scriptstyle E}$};\draw[-,fill=gray!30,opacity=0.8] (-62.0,11)--(76.0,149)--(86.0,139)--(-52.0,1)--cycle;\draw[-,dashed] (-160.0,135)node[dot]{}node[left] {${\scriptstyle (x_0+n,t_0+\delta)}$}--++(110,-110)node[dot]{}node[right] {${\scriptstyle D}$}--++(-14,-14)node[dot]{}--++(11,-11)node[dot]{}node[below] {${\scriptstyle (x_1+n,0)}$};\draw[-,fill=gray!30,opacity=0.8] (-160.0,133)--(-52.0,25.0)--(-65.0,12)--(-173.0,120.0)--cycle;\draw[-,dashed] (-164.0,111)node[dot]{}node[left] {${\scriptstyle G}$};\draw[-,dashed] (-54.0,1)--(-17.0,38)node[dot]{}node[right] {${\scriptstyle F}$}--(56.0,111)node[dot]{}node[right] {${\scriptstyle \tau^{-\delta}G}$}--(26.0,141)node[dot]{}node[right] {${\scriptstyle (x_1+n-\delta,t_1+\delta)}$}--(-84.0,31)node[dot]{}node[xshift=-25,yshift=4] {${\scriptstyle (x_1+n,t_1)}$}--cycle;\draw[-,thick] (-94.0,115)--++(-32,32)--++(-37,-37)--++(32,-32)--cycle;\draw[-,fill=gray!30] (-203.0,6)node[yshift=15] {${\scriptstyle u}$}--++(23,23)--++(-46,0)--cycle;\draw[-,fill=gray!30] (27.0,140)node[yshift=-15] {${\scriptstyle w}$}--++(19,-19)--++(-38,0)--cycle;
\node at (-74,75) {${\scriptstyle C_{x_0,\Fb_l+1}^{[1,2\Fb_l+1]}}$};
\node at (-114,75) {${\scriptstyle O_{x_0}^{0}}$};
\end{tikzpicture}
\caption{}
\label{fig-omegaFl}
\end{figure}
$$C_1=(x_{\Fb_l+1}+1,m-1), \quad C_2=(h_0+\Fb_l\delta^{\vee},1),\quad D=(x_0+n,t_0),$$
$$E=(h_0+n-\delta^{\vee},m-1), \quad F=(x_{\Fb_l+1},t_{\Fb_l+1}),\quad G=(x_1+n,h_{\Fb_l+1}+\delta-n-x_1).$$
In both cases $l=-1, k_0\geq 2$ and $l=1, k_0=1$, there are  inequalities 
$$x_{\Fb_l+1}\leq x_1+\Fb_l\delta^{\vee}<h_0+s_{l+1}-\delta^{\vee}+\Fb_l\delta^{\vee}=h_0+n-\delta^{\vee},$$
$$h_0+\Fb_l\delta^{\vee}\geq h_0+n-\delta^{\vee},$$ $$h_{\Fb_l+1}+\delta-n-x_1\leq h_1+\Fb_l\delta^{\vee}+\delta-n-x_1=t_1-s_{l+1}+\delta\leq m-s_{l+1}.$$ 
Carefully checking the coordinates shows that the above inequalities guarantee that the rectangle in Figure \ref{fig-omegaFl} with thick frame has no vertices in $\widehat{M}$. It follows from Figure \ref{fig-omega(x0t0)} (when $l=-1$) and Figure \ref{fig-omega(xiti)} (let $i=\Fb_l+1$ when $l=1$) that the area $u$ has no vertices in $\widehat{M}$. Thus there is a standard $M$-exact sequence
$$0\lra\pi(x_{\Fb_l+1}+\delta,t_{\Fb_l+1})\lra \pi(x_{\Fb_l+1}+\delta,h_{\Fb_l}-x_{\Fb_l+1})\oplus \pi(x_{\Fb_l+1},t_{\Fb_l+1}+\delta)\lra \pi(A)\lra 0.$$
Hence $\Omega_M^2(x_{\Fb_l},t_{\Fb_l})=\Omega_{M}\pi(A)=\pi(x_{\Fb_l+1}+\delta,t_{\Fb_l+1})=\pi(x_{\Fb_l+1},t_{\Fb_l+1})$. 
 
\medskip 
Assume that $t_1\geq s_{l+1}$, we claim that 
$$x_1+s_{l+1}+(i-2)\delta^{\vee}\leq x_i<h_0+s_{l+1}+(i-2)\delta^{\vee}
, \mbox{ and }$$
$$x_1+s_{l+1}+(i-1)\delta^{\vee}\leq h_i< h_0+s_{l+1}+(i-1)\delta^{\vee} $$
for all $1\leq i\leq \Fb_l+1$. Obviously the claim holds for $i=1$ since $s_{l+1}\leq t_1\leq \delta^{\vee} $ and $x_1<h_0+s_{l+1}-\delta^{\vee}$. 

\medskip 
Now assume that  $2\leq i\leq \Fb_l+1$. This happens only in the case $k_0=1$, then $l=1$, $s_{l+1}=s_2$, $\delta=n$ and  $\delta^{\vee}=s_1$. Note that $2\leq 2(\Fb_1-i+2)\leq 2\Fb_1$. It follows that   
$$H^+(\tau^-_{2(\Fb_1-i+2)+1}(x_1, h_0-x_1)) \mbox{ and } H^+(\tau^-_{2(\Fb_1-i+2)}(x_1, h_0-x_1))$$
contain no vertices in $\widehat{M}$. It is straightforward to verify that
$$\tau^-_{2(\Fb_1-i+2)+1}(x_1, h_0-x_1)=(x_1-(\Fb_1-i+2)m-1,h_0-x_1),$$
$$\tau^-_{2(\Fb_1-i+2)}(x_1, h_0-x_1)=(h_0-(\Fb_1-i+2)m-1,m+x_1-h_0).$$
Since $\Fb_1m\equiv -s_2 \pmod{n}$, $m\equiv s_1\pmod{n}$ and that $\widehat{M}$ is $\tau^n$-stable, we conclude that 
$$H^+(x_1+s_2+(i-2)s_1-1, h_0-x_1)\mbox{ and } H^+(h_0+s_2+(i-2)s_1-1,m+x_1-h_0)$$
contain no vertices in $\widehat{M}$. By induction
$$x_1+s_2+(i-3)s_1\leq x_{i-1}< h_0+s_2+(i-3)s_1, \mbox{ and }$$
$$x_1+s_{2}+(i-2)s_1\leq h_{i-1}< h_0+s_{2}+(i-2)s_1.$$
This forces $(x_{i-1},t_{i-1})\in H^+(x_1+s_2+(i-2)s_1-1, h_0-x_1)$, as shown in Figure \ref{fig-xihi}, where
$$X=(x_1+s_2+(i-2)s_1-1, h_0-x_1), \quad Y=(h_0+s_2+(i-2)s_1-1,m+x_1-h_0).$$
\begin{figure}
    \centering 
    \begin{tikzpicture}[scale=0.04]
\draw[-] (-202.8,129)--++(270,0)node[above] {${\scriptstyle m-1}$};
\draw[-] (-202.8,1)--++(270,0)node[above] {${\scriptstyle 1}$};
\draw[-,fill=gray!30,opacity=0.8] (-56.0,30)node[dot]{}node[right] {${\scriptstyle X}$}--++(29,-29)--++(99,99)--++(-29,29)--cycle;\draw[-,fill=gray!30,opacity=0.8] (-157.0,129)--++(99,-99)--++(-29,-29)--++(-99,99)node[dot]{}node[left] {${\scriptstyle Y}$}--cycle;\draw[-,dashed] (-36.0,30)node[dot]{}node[right] {${\scriptstyle (x_{i-1},t_{i-1})}$}--++(-20,-20)node[dot]{}node[yshift=-5] {${\scriptstyle (x_i,t_i)}$}--++(-112,112)node[dot]{}node[left] {${\scriptstyle (x_i,t_i+n)}$};
\end{tikzpicture}
    \caption{}
    \label{fig-xihi}
\end{figure}
It follows that 
$$x_1+s_2+(i-2)s_1\leq x_i\leq h_{i-1}<h_0+s_{2}+(i-2)s_1, \mbox{ and }$$
$$x_i+t_i+n>x_1+s_2+(i-2)s_1+m-1.$$
This implies that $h_i\geq x_1+s_2+(i-1)s_1$ and 
$$h_i=x_i+t_i<h_0+s_2+(i-2)s_1+t_i\leq h_0+s_2+(i-1)s_1.$$
This proves the claim. Particularly, we have 
$$x_1+n=x_1+s_{l+1}+\Fb_l\delta^{\vee}\leq h_{\Fb_l+1}<h_0+s_{l+1}+\Fb_l\delta^{\vee}=h_0+n.$$ 
This justifies the position of the vertex $F$ in Figure \ref{fig-omegaFl}. 
Following the construction in Subsection \ref{compute-mdim}, there is some  vertex $(x_1+n, t)$ in $\widehat{M}$ with $t\leq h_0-x_1$ since $h_{\Fb_l+1}-(x_1+n)<h_0-x_1$ and at least $(x_1+n,h_0-x_1)\in \widehat{M}$, and thus  $G\in\omega_{\widehat{M}}(x_{\Fb_l+1},t_{\Fb_l+1})$. The same construction shows that $(x_1+n,t_1)\in\omega_{\widehat{M}}(\tau^{-\delta}G)$. Hence $$\pi(x_1,t_1)=\pi(x_1+n,t_1)\in\add\left(\Omega_M\pi(\tau^{-\delta}G)\right)=\add\left(\Omega_M\pi(G)\right)\subseteq\add\Omega_M^2\pi(x_{\Fb_l+1},t_{\Fb_l+1}).$$
Finally we get $\pi(x_1,t_1)\in\add\left(\Omega_M^{2(\Fb_l+1)}\pi(x_1,t_1)\right)$ and thus $\pi(x_1,t_1)$ is $M$-periodic.  
 
Now assume that $t_1<s_{l+1}$. We denote by $T_r$ the highest vertex of $\Delta_{x_1}^r$ and by $L_r$ the lowest vertex of $\nabla_{x_1}^r$. If $0\leq r<\Fb_l$, then $l=1, k_0=1$ and $\delta=n$. In this case, one can prove that 
$$S_{x_1}^r\subseteq O_{x_0}^{r+1}\cup C_{x_0, r+1}^{[1,2(r+1)]}$$ 
and therefore $S_{x_1}^r\cap\widehat{M}=\emptyset$. This can be done by  comparing the vertices $T_r$ and $L_r$, respectively, with the highest vertex of $C_{x_0, r+1}^{[1,2(r+1)]}$ and the lowest vertex of $O_{x_0}^{r+1}$ (see vertices $A$ and $E$ in Figure \ref{fig-omega(xiti)}). 

Assume now that $r=\Fb_l$. For simplicity, we write $(a,t)$ for $T_r$ and $(b,t')$ for $L_r$. If $l=-1$, then $\Fb_l=0$ and 
$$T_{r}=(x_1+1, \delta+x_{2}-x_1-2)\mbox{ and }L_{r}=(x_2, x_1-x_2+\delta^{\vee}+1).$$ 
Comparing $T_r$ and $L_r$, respectively, with the vertices $C_1=(x_{\Fb_l+1}+1,m-1)$ and $(x_1+n-1, 1)$ in Figure \ref{fig-omegaFl} tells us that $S_{x_1}^r\cap\widehat{M}=\emptyset$. This follows from the following inequalities
$$a=x_1+1=x_{\Fb_l+1}+1, \quad a+t=x_2+\delta-1\leq x_1+t_1+\delta-1<x_{\Fb_l+1}+m,$$
$$b=x_2\leq x_1+t_1\leq x_1+s_{l+1}-1=x_1+n-1,\quad b+t'=x_1+\delta^{\vee}+1>x_1+n.$$
If $l=1$, then $$T_{r}=(x_1+\Fb_l\delta^{\vee}+1, \delta+h_2-x_1-\delta^{\vee}-1) \mbox{ and }
L_{r}=(h_2+(\Fb_l-1)\delta^{\vee}, x_1+2\delta^{\vee}-h_2+1).$$ 
Comparing $L_r$ with $(x_1+n-1,1)$ in Figure \ref{fig-omegaFl}, we have 
$$b=h_2+(\Fb_l-1)\delta^{\vee}\leq x_1+t_1+\Fb_l\delta^{\vee}<x_1+s_2+\Fb_l\delta^{\vee}=x_1+n,$$
$$b+t'=x_1+(\Fb_l+1)\delta^{\vee}+1=x_1+n-s_{l+1}+\delta^{\vee}+1>x_1+n.$$
This implies that $\nabla_{x_1}^r\cap\widehat{M}=\emptyset$. Next,
we compare the highest vertex $C'=(h_1+(\Fb_l-1)\delta^{\vee}+1, m-1)$ of $C_{x_0,\Fb_l+1}^{[2\Fb_l, 2\Fb_l+1]}$ 
with the highest vertex  $E=(h_0+n-\delta^{\vee},m-1)$ of  $H^+(x_0+n-1,t_0+\delta)$. Since $$h_1+(\Fb_l-1)\delta^{\vee}+1\leq h_0+t_1+(\Fb_l-1)\delta^{\vee}+1=h_0+t_1+n-s_{l+1}-\delta^{\vee}+1\leq h_0+n-\delta^{\vee},$$
This inequality indicates that the position of $H^+(x_0+n-1,t_0+\delta)$ and 
 $C_{x_0,\Fb_l+1}^{[2\Fb_l, 2\Fb_l+1]}$  is same as that of $H^+(x_0+n-1,t_0+\delta)$ and $C_{x_0,\Fb_l+1}^{[1, 2\Fb_l+1]}$, as shown in Figure \ref{fig-omegaFl}. Thus we can compare 
$T_r$ with $C'$.
We have the following inequalities:
$$a=x_1+\Fb_l\delta^{\vee}+1\geq x_1+t_1+(\Fb_l-1)\delta^{\vee}+1=h_1+(\Fb_l-1)\delta^{\vee}+1,$$
$$a+t=h_2+\delta+(\Fb_l-1)\delta^{\vee}< h_1+(\Fb_l-1)\delta^{\vee}+m.$$
Together with the shadowed areas in Figure \ref{fig-omegaFl}, we conclude that $\Delta_{x_1}^{r}\cap\widehat{M}=\emptyset$. 

\medskip 
(4). First, we claim that 
$$h_0+s_{l+1}+(i-2)\delta^{\vee}\leq x_i<x_0+s_{l+1}+(i-1)\delta^{\vee},$$
$$h_0+s_{l+1}+(i-1)\delta^{\vee}\leq h_i<x_0+s_{l+1}+i\delta^{\vee}$$
for all $1\leq i\leq \Fb_l+1$. When $i=1$, by assumption that $x_1\geq h_0+s_{l+1}-\delta^{\vee}$ and $t_0<s_{l+1}$, we only need to prove $h_1\geq h_0+s_{l+1}$. Note that $\tau_{2\Fb_l+1}^-(x_0,t_0+\delta)=(x_0-\Fb_lm-1,t_0+\delta)$ and $x_0-\Fb_lm-1\equiv x_0+s_{l+1}-1\pmod{n}$. Hence $H^+(x_0+s_{l+1}-1,t_0+\delta)\cap\widehat{M}=\emptyset$. Since 
$$h_0+s_{l+1}-\delta^{\vee}\leq x_1\leq x_0+t_0<x_0+s_{l+1},$$
the picture of $H^+(x_0+s_{l+1}-1,t_0+\delta)$ forces that 
$x_1+t_1+\delta\geq x_0+s_{l+1}+t_0+\delta$, that is, $h_1\geq h_0+s_{l+1}$. Now assume that $2\leq i\leq \Fb_l+1$. Then $l=1$, $\delta=n$,  $\delta^{\vee}=s_1$ and $\Fb_1=k_1$. Similarly as we have done in the proof of (3), consider 
$$H^+\left(\tau^-_{2(\Fb_1-i+2)}(x_0,n+t_0)\right) \mbox{ and }H^+\left(\tau^-_{2(\Fb_l-i+2)-1}(x_0,n+t_0)\right),$$
one can prove the claim easily. 

\begin{figure}
    \centering 
    \begin{tikzpicture}[scale=0.04]
\draw[-] (-286,209)--++(382,0)node[above] {${\scriptstyle m-1}$};\draw[-] (-286,29)--++(382,0)node[above] {${\scriptstyle s_{l+1}-1}$};\draw[-] (-286.65,1)--++(382.2,0)node[above] {${\scriptstyle 1}$};\draw[-,fill=gray!30,opacity=0.8] (-126.5,175)--(-92.5,209)node[dot]{} node[above]{${\scriptstyle (h_0+n-\delta^{\vee},m-1)}$}--(81.5,35)--(47.5,1)--cycle;\draw[-,dashed] (-128.5,175)node[dot]{}node[left] {${\scriptstyle (x_0+n,t_0+\delta)}$}--++(150,-150)node[dot]{}node[right] {${\scriptstyle D}$}--++(-19,-19)node[dot]{}node[left] {${\scriptstyle (x_1+n, h_0-x_1)}$};\draw[-,fill=gray!30,opacity=0.8] (-128.5,173)--(19.5,25.0)--(1.5,7)--(-146.5,155.0)--cycle;\draw[-,dashed] (-128.5,175)--(-147.5,156)node[dot]{}node[below] {${\scriptstyle E}$}--(-251.5,52)node[dot]{}node[right] {${\scriptstyle C}$};\draw[-,fill=gray!30] (-201.5,4)node[yshift=20] {${\scriptstyle u}$}--++(25,25)--++(-50,0)--cycle;\draw[-,dashed] (2.5,6)--(48.5,52)node[dot]{}node[right] {${\scriptstyle \tau^{-\delta}C}$}--(-52.5,153)node[dot]{}node[right] {${\scriptstyle A}$}--++(-53,53)node[dot]{}node[left] {${\scriptstyle (x_{\Fb_l+1},\delta+t_{\Fb_l+1})}$}--++(-150,-150)node[dot]{}node[left] {${\scriptstyle (x_{\Fb_l+1}+\delta,t_{\Fb_l+1})}$}--++(53,-53)node[dot]{}node[left] {${\scriptstyle B}$}--++(150,150);
\end{tikzpicture}
    \caption{}
    \label{fig-2ndperiodic}
\end{figure}
In Figure \ref{fig-2ndperiodic}, the position of the vertex $A=(x_{\Fb_l+1}, h_{\Fb_l}+\delta-x_{\Fb_l+1})$ is justified by the inequalities
$$h_0+n-\delta^{\vee}=h_0+s_{l+1}+(\Fb_l-1)\delta^{\vee}\leq x_{\Fb_l+1}<x_0+s_{l+1}+\Fb_l\delta^{\vee}= x_0+n,$$
and 
$h_0+n\leq h_{\Fb_l}+\delta=h_{0}+\delta<h_0+n+\delta$ when $l=-1$ and $$h_0+n\leq h_0+n-\delta^{\vee}+\delta=h_0+s_{l+1}+(\Fb_l-1)\delta^{\vee}+\delta\leq h_{\Fb_l}+\delta<x_0+n+\delta<h_0+n+\delta$$
when $l=1$.
By the proof of $(2)$, the area $u$ contains no vertices in $\widehat{M}$. By the construction in Subsection \ref{compute-mdim}, we see that $$C\in\omega_{\widehat{M}}(A)\mbox{ and }E\in\omega_{\widehat{M}}(\tau^{-\delta}C), $$
where $C=(x_{\Fb_l+1}+\delta, n+h_0-x_{\Fb_l+1})$ and $E=(x_1+n,\delta+h_0-x_1)$. It follows that
$$\pi(E)\in\add\Omega_M^2\pi(A)=\add\Omega_M^3\pi(x_{\Fb_l},t_{\Fb_l}).$$
If $l=1$, then the proof of (2) gives $$\pi(x_1,t_1)=\Omega_M\pi(x_1+n,\delta+h_0-x_1)=\Omega_M\pi(E)\in\add\Omega_M^3(x_{\Fb_1},t_{\Fb_1})=\add\Omega_M^{2\Fb_1+1}\pi(x_1,t_1).$$
If $l=-1$, then 
$$\pi(x_1,\delta+ h_0-x_1)=\pi(E)\in\add\Omega_M^3\pi(x_0,t_0)=\add\Omega_M^2(x_1, \delta+h_0-x_1).$$
This finishes the proof of (4). 
\end{proof}

\begin{Koro}\label{cor_ts}
Keep the notations above. If $t_{\min}<s_{l+1}$, then $\gldim \End_{\Lambda}(M)=\infty$.
\end{Koro}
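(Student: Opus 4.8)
The plan is to exhibit an $M$-periodic $\Lambda$-module, for then that module has infinite $M$-dimension and hence $\gldim\End_{\Lambda}(M)=\infty$ (as remarked after the definition of $M$-periodicity, using Lemma~\ref{AUS}). Recall that in Case~II we have arranged that $\widehat{M}$ contains a vertex $(a,s^{\vee})$, so $(a,\delta+t_{\min})=(a,m-s)\in\widehat{M}$; since $t_{\min}$ is the smallest of the integers $t_x$, the integer $x_0:=a$ satisfies $t_{x_0}=t_{\min}<s_{l+1}$ and, because $(x_0+t_{x_0},0)\in\widehat{M}$, also $\theta(x_0)-x_0\le t_{x_0}=t_{\min}$. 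Put $x_{i+1}:=\theta(x_i)$ and $t_i:=t_{x_i}$ for $i\ge 0$. The first, and hardest, step is to verify
$$S_{x_0}^{r}\cap\widehat{M}=\emptyset\qquad(0\le r\le\Fb_l).$$
This is precisely where the choice $t_{x_0}=t_{\min}$ is used: the bound $\theta(x_0)-x_0\le t_{\min}$, together with $\delta+\delta^{\vee}=m$ and $s<s_{l+1}\le m/2$, forces the apex vertices of the triangles $\Delta_{x_0}^r$ and the base vertices of the $\nabla_{x_0}^r$ of Table~\ref{table-Delta-nabla} to lie strictly between the lines of ordinates $s$ and $m-s$, i.e.\ inside the band that $\widehat{M}$ avoids; combining this placement with the inclusions $T_x^0\subseteq H^-(\tau(x,\delta+t_x))$, ${T'_x}^0\subseteq\tau^{-\delta}H^-(\tau(x,\delta+t_x))$, $T_x^r\subseteq\tau^{\delta}D_{x,r}^{[2r-1,2r+1]}$, ${T'_x}^r\subseteq D_{x,r}^{[2r-1,2r+1]}$ recorded just before Proposition~\ref{prop-case2}, the fact that $HM\cap\widehat{M}=\emptyset$ (valid since $\rd(M)\ge 2\Fb_l+1$), and the $\tau^n$-stability of $\widehat{M}$ (note that $\delta$ is a multiple of $n$), yields the displayed vanishing.

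Granting this, I would run the following loop, maintaining the invariant that $t_i<s_{l+1}$ and $S_{x_i}^r\cap\widehat{M}=\emptyset$ for all $0\le r\le\Fb_l$, which holds at $i=0$ by the previous paragraph. At stage $i$ apply Proposition~\ref{prop-case2} to the chain $x_i,x_{i+1},\dots,x_{i+\Fb_l+1}$ (legitimate since $x_{j+1}=\theta(x_j)$ throughout and $S_{x_i}^r\cap\widehat{M}=\emptyset$). If $x_{i+1}\ge t_i+x_i+s_{l+1}-\delta^{\vee}$, then part~(4) applies (its remaining hypothesis $t_i<s_{l+1}$ is part of the invariant) and produces the $M$-periodic module $\pi(x_{i+1},\tilde t_{i+1})$; stop. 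Otherwise $x_{i+1}<t_i+x_i+s_{l+1}-\delta^{\vee}$ and part~(3) applies: if $t_{i+1}\ge s_{l+1}$ then $\pi(x_{i+1},t_{i+1})$ is $M$-periodic and we stop; if $t_{i+1}<s_{l+1}$ then part~(3) gives $S_{x_{i+1}}^r\cap\widehat{M}=\emptyset$ for all $0\le r\le\Fb_l$, while part~(2) with $j=1$ (when $l=1$, where $\Fb_l=k_1\ge 1$) or part~(3) with $\Fb_l=0$ (when $l=-1$) gives $\Omega_M^2\pi(x_i,t_i)=\pi(x_{i+1},t_{i+1})$; the invariant is restored at $i+1$ and we repeat.

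If the loop stops, we have found an $M$-periodic module. If it never stops, then $\Omega_M^2\pi(x_i,t_i)=\pi(x_{i+1},t_{i+1})$ for every $i\ge 0$. Since $\widehat{M}$ is $\tau^n$-stable we have $t_{x+n}=t_x$ and $\theta(x+n)=\theta(x)+n$, so $\theta$ induces a self-map of the finite set $\mathbb{Z}/n\mathbb{Z}$, whence the residues $x_i\bmod n$ are eventually periodic, say $x_{i_0+p}\equiv x_{i_0}\pmod n$ with $p\ge 1$. As $\theta$ strictly increases values, $x_{i_0+p}=x_{i_0}+cn$ for some $c\ge 1$, so $t_{x_{i_0+p}}=t_{x_{i_0}}$ and
$$\Omega_M^{2p}\pi(x_{i_0},t_{i_0})=\pi(x_{i_0+p},t_{i_0+p})=\pi(x_{i_0}+cn,t_{i_0})=\pi(x_{i_0},t_{i_0}),$$
so $\pi(x_{i_0},t_{i_0})$ is $M$-periodic. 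In every case some $\Lambda$-module is $M$-periodic, hence $\gldim\End_{\Lambda}(M)=\infty$. The only genuinely delicate point is the opening verification $S_{x_0}^r\cap\widehat{M}=\emptyset$ for the chosen $x_0$; the remainder is a finite case analysis driven by Proposition~\ref{prop-case2} together with the pigeonhole argument modulo $n$.
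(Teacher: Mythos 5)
Your argument is correct and follows essentially the same route as the paper's proof: pick $x_0$ with $t_{x_0}=t_{\min}$, check that the triangles $S_{x_0}^r$ lie in the band of ordinates strictly between $s$ and $m-s$ that $\widehat{M}$ avoids, iterate Proposition \ref{prop-case2} until either a case (3)/(4) termination produces an $M$-periodic module, and otherwise conclude by the pigeonhole argument modulo $n$ that $\Omega_M^{2r}\pi(x_i,t_i)=\pi(x_i,t_i)$ for some $i$ and $r>0$. The only cosmetic difference is that you spell out the pigeonhole step and the citation of which part of Proposition \ref{prop-case2} yields $\Omega_M^2\pi(x_i,t_i)=\pi(x_{i+1},t_{i+1})$ in slightly more detail.
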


\begin{proof}
Let $x_0$ be the integer such that $t_{x_0}=t_{\min}$. We inductively define $x_i=\theta(x_{i-1})$ and $t_i=t_{x_i}$ for all $i\geq 1$. Then it is straightforward to verify that the vertices in  $S_{x_0}^r$ are all between the lines $(-, m-\delta-t_{\min}+1)$ and $(-,\delta+t_{\min}-1)$, and thus $S_{x_0}^r\cap\widehat{M}=\emptyset$ for each $0\leq r\leq \Fb_l$. By Proposition \ref{prop-case2}, either we get a $M$-periodic module, or $x_1<h_0+s_{l+1}-\delta^{\vee}$ and $t_1<s_{l+1}$ and $S_{x_1}^r\cap\widehat{M}=\emptyset$ for all $0\leq r\leq \Fb_l$. In the later case, we have $\Omega_M^2\pi(x_0,t_0)=\pi(x_1,t_1)$, we replace $x_0$ with $x_1$. Repeating this process, either we get a $M$-periodic module, or $\Omega_M^2\pi(x_i,t_i)=\pi(x_{i+1},t_{i+1})$ for all $i\geq 0$. Since each $\tau$-orbit of indecomposable modules has $n$ isomorphism classes, we can finally find some integer $i\geq 0$ and $r>0$ such that $\pi(x_i,t_i)=\pi(x_{i+r},t_{i+r})$. Therefore
$$\Omega_M^{2r}\pi(x_i,t_i)=\pi(x_{i+r},t_{i+r})=\pi(x_i,t_i).$$
Hence $\gldim \End_{\Lambda}(M)=\infty$.
\end{proof}

Proposition \ref{cru-prop} follows immediately from Proposition \ref{prop-caseI} and  Corollary \ref{cor_ts}.

\section*{Acknowledgements}
The research work was partially supported by NSFC (12031014). The second author would like to thank
China Scholarship Council for supporting her study at the University of Stuttgart and also wish to thank
Professor Steffen Koenig for hospitality.

\bigskip
{Wei Hu, School of Mathematical Sciences,  Laboratory of Mathematics and Complex Systems, MOE, Beijing Normal University, 100875 Beijing, People's Republic of  China.

 {\tt Email: huwei@bnu.edu.cn}}

 \medskip
 \bigskip
 {Xiaojuan Yin, Academy of Mathematics and Systems Science, Chinese Academy of Sciences, 100190 Beijing, People's Republic of  China.

  {\tt Email: yinxj@amss.ac.cn}}

\begin{thebibliography}{99}
\small
\bibliographystyle{plain}

\bibitem{Asashiba2000}{{\sc H. Asashiba,}  {\it On a lift of an individual stable equivalence to a standard derived equivalence for representation-finite self-injective algebras.}  Algebr. Represent. Theor. {\bf 6} (2003),  427-447.}

\bibitem{Auslander1970}{{\sc M. Auslander and B. Auslander,}  {\it Representation dimension of Artin algebras.} Queen Mary College, 1970.}

\bibitem{chen2021rigidity}{{\sc H. X. Chen, M. Fang, O. Kerner, S. Koenig and K. Yamagata,}  {\it Rigidity dimension of algebras.} Math. Proc. Cambridge Phil. Soc. {\bf 170}  (2021), 417-443.}

\bibitem{chenrigidity}{{\sc H. X. Chen, M. Fang, O. Kerner, S. Koenig and K. Yamagata,}  {\it Rigidity dimensions of algebras,
   II: methods and examples, in preparation.}}

\bibitem{chen2016ortho}{{\sc H. X. Chen and S. Koenig,}  {\it Ortho-symmetric modules, Gorenstein algebras, and derived equivalences.} International Mathematics Research
    Notices {\bf 2016}  (2016), 6979-7037.}

\bibitem{Chenxing} {{\sc H. X. Chen and W. Xing,} {\it Rigidity dimensions of Hochschild extensions of hereditary algebras of type D.} Journal of Pure and Applied Algebra {\bf 226}  (2022), 107042.}

\bibitem{Hu2006}{\sc W. Hu and C. C. Xi, } {\it  Auslander-Reiten sequences and global dimensions}. Math. Res. Lett. {\bf 13}(2006), no. 6, 885-895.

\bibitem{Hu2010}{{\sc W. Hu and C. C. Xi,}  {\it Derived equivalences and stable equivalences of Morita type, I.} Nagoya Math. J. {\bf 200}  (2010), 107-152.}


\bibitem{Hu2011}{{\sc W. Hu and C. C. Xi,}  {\it $\mathcal{D}$-split sequences and derived equivalences.} Adv. Math. {\bf 227}  (2011), 292-318.}

\bibitem{HY2023} {{\sc W. Hu and X. J. Yin,} {\it Rigidity degrees of indecomposable modules over representation-finite self-injective algebras.} Journal of Pure and Applied Algebra  (2023), 107498.}

\bibitem{Mueller1968a}{{\sc B. J. M\"{u}ller,}  {\it The classification of algebras by dominant dimension.} Canad. J. Math. {\bf 20} (1968), 398-409.}


\bibitem{nakayama1958algebras}{{\sc T. Nakayama,}  {\it On algebras with complete homology.} Abh. Math. Semin.  Univ. Hamb. {\bf 22} (1958), 300-307.}

\bibitem{Riedtmann1980a}{{\sc C. Riedtmann,}  {\it Algebren, Darstellungsk{\"o}cher, {\"U}berlagerungen und zur{\"u}ck.} Comment. Math. Helv. {\bf 55} (1980), 199-224.}

\end{thebibliography}
\end{document}